\documentclass[iccmp]{ipbook}

\RequirePackage{amsthm,amsmath,amssymb,mathrsfs}
\RequirePackage{tabularx}
\usepackage{tikz,tcolorbox,rotating,float}
\usetikzlibrary{intersections}
\usetikzlibrary{arrows,shapes}
\usepackage{stackengine}

\startlocaldefs
\endlocaldefs

\newtheorem{The}{Theorem}[section]
 \newtheorem{Cor}[The]{Corollary}
 \newtheorem{Lem}[The]{Lemma}
 \newtheorem{Pro}[The]{Proposition}
 \theoremstyle{definition}
 \newtheorem{defn}[The]{Definition}
 \newtheorem{Prob}[The]{Problem}
 \newtheorem{Rem}[The]{Remark}
 \newtheorem{Ex}[The]{Example}
 \numberwithin{equation}{section}

\newcommand{\T}{\mathbb{T}}
\newcommand{\R}{\mathbb{R}}

\newcommand{\N}{\mathbb{N}}

\newcommand{\E}{\mathbb{E}}

\newcommand{\UC}{\mathrm{UC}\,}
\newcommand{\SCL}{\mathrm{SCL\,}}
\newcommand{\Lip}{\mathrm{Lip\,}}
\newcommand{\supp}{\mathrm{supp\,}}
\newcommand{\Cut}{\mathrm{Cut}\,}

\firstpage{1}
\lastpage{8}

\title[Generalized Hamiltonian gradient flow]{Recent progress in generalized Hamiltonian gradient flow: Singularities}

\author{Wei Cheng}
\address{School of Mathematics, Nanjing University, Nanjing 210093, China}
\email{chengwei@nju.edu.cn}
\thanks{}
\author{Jiahui Hong}
\address{School of Mathematics, Nanjing University of Aeronautics and Astronautics, Nanjing 211106, China}
\email{hongjiahui@nuaa.edu.cn}
\thanks{}

\subjclass[2020]{35F21, 49L25, 37J51}
\keywords{Hamilton–Jacobi equations, generalized Hamiltonian gradient flow, propagation of singularities, mass transport}

\begin{document}

\begin{abstract}
This paper is a survey of the generalized Hamiltonian gradient flow (GHGF) framework for Hamilton-Jacobi equations, with an emphasis on the propagation of singularities and its connections to weak KAM theory, optimal transport and mean field control. 

In addition to reviewing the main ideas and known results, we present two new contributions. First, we provide a variational construction of generalized characteristics via a minimizing movement scheme; by taking the weak limit of approximate solutions and using Young measure compactness, we show that the limiting curve satisfies the generalized characteristic differential inclusion. Second, we lift the forward dynamics of strict singular characteristics to a semi-flow on the space of trajectories and study its invariant probability measures. We prove that the only invariant measures of the GHGF semi-flow that attain the critical value \(c[H]\) are precisely the projected Mather measures, thereby giving a new dynamical characterization of Mather's minimal measures as well as Ma\~n\'e's critical value.

Finally, we discuss a number of open problems that arise from the GHGF perspective, including questions on uniqueness of strict singular characteristics, rectifiability of cut loci, stability under perturbations, contact Hamiltonian systems, vanishing noise limits, and extensions to non-convex or low-regularity Hamiltonians. These problems highlight the deeper connections between singular dynamics, ergodic theory, optimal transport, and geometric analysis, and indicate directions for future research.
\end{abstract}

\maketitle

\tableofcontents

\section{Introduction}

Let $M$ be a compact, connected, smooth manifold without boundary. Denote by $TM$ and $T^*M$ the tangent and cotangent bundles of $M$, respectively. A function $H:T^*M\to\mathbb{R}$ is called a \emph{Tonelli Hamiltonian} if it is of class $C^2$, $H_{pp}(x,p)>0$ for every $(x,p)\in T^*M$ and the map $p\mapsto H(x,p)$ is uniformly superlinear. The corresponding \emph{Tonelli Lagrangian} $L:TM\to\mathbb{R}$ is defined via the Fenchel transform.

\subsection{Viscosity solution and propagation of singularities}

We shall be concerned with both the evolutionary Hamilton--Jacobi equation
\begin{equation}\label{eq:HJe}\tag{HJ$_e$}
    \partial_t u(t,x)+H\bigl(x,D_x u(t,x)\bigr)=0,\qquad t>0,\;x\in M,
\end{equation}
and the stationary Hamilton--Jacobi equation
\begin{equation}\label{eq:HJ_wk}\tag{HJ$_s$}
    H\bigl(x,D\phi(x)\bigr)=c[H],\qquad x\in M.
\end{equation}
The constant $c[H]$ on the right‑hand side of \eqref{eq:HJ_wk} is the Ma\~n\'e critical value. Throughout the paper, unless stated otherwise, $c[H]=0$, $u$ and $\phi$ will denote viscosity solutions of \eqref{eq:HJe} and \eqref{eq:HJ_wk}, respectively, and $\phi$ will belong to the space $\mathrm{SCL}(M)$ of semiconcave functions with linear modulus. We recall that if $(M,g)$ is a Riemannian manifold, a continuous function $\phi:M\to\R$ is called semiconcave with linear modulus (with respect to the metric $g$) or $C$-semiconcave if
\begin{align*}
	\lambda\phi(x)+(1-\lambda)\phi(y)-\phi(\gamma(\lambda))\leqslant \frac C2\lambda(1-\lambda)d^2(x,y)
\end{align*}
for any $x,y\in M$, $\lambda\in[0,1]$ and a geodesic $\gamma:[0,1]\to M$ with $\gamma(0)=x$ and $\gamma(1)=y$, and $C\geqslant 0$ is called a semiconcavity constant for $\phi$.

It is well known that Hamilton-Jacobi equations typically do not admit globally defined smooth solutions, a phenomenon intrinsically linked to the emergence of singularities induced by the crossing or focusing of characteristic curves. The concept of viscosity solutions, introduced in the foundational contributions of Crandall and Lions~(\cite{Crandall_Lions1983}) and Crandall, Evans, and Lions~(\cite{Crandall_Evans_Lions1984}), furnishes the appropriate analytical framework for addressing questions of well-posedness for the evolutionary problem~\eqref{eq:HJe}. Comprehensive expositions of this theory for first‑order equations can be found in the monograph by Bardi and Capuzzo-Dolcetta~(\cite{Bardi_Capuzzo-Dolcetta1997}), while the extension to second‑order equations is treated in the work of Fleming and Soner~(\cite{Fleming_Soner_book2006}).

In the 1990s, Albert Fathi developed a theory for Hamilton-Jacobi equations, called the weak KAM theory (\cite{Fathi1997_1,Fathi1997_2,Fathi1998_1,Fathi1998_2}), which has deep connections with Mather's theory of area-preserving monotone twist maps (see for instance \cite{Mather_Forni1994} and the references therein) and with the Lagrangian dynamics of time-periodic Tonelli systems (see, for instance, the papers \cite{Mather1991,Mather1993,Mane1997,Mane1992,Contreras_Delgado_Iturriaga1997} and the books \cite{Contreras_Iturriaga_book,Sorrentino_book} with the references therein). As a global, non‑smooth variational framework, weak KAM theory transforms the study of Hamilton-Jacobi equations into a dynamical investigation of the associated Lagrangian minimizers. This viewpoint lies at the heart of modern analytic approaches to Hamiltonian systems. For further details on weak KAM theory, we refer the reader to Fathi’s original monograph \cite{Fathi_book} and to Tran’s recent book \cite{Tran_book}, which gives a more PDE‑oriented account.

The optimal regularity that one may expect for viscosity solutions of \eqref{eq:HJe} and \eqref{eq:HJ_wk} is local semiconcavity. Indeed, semiconcave functions were systematically employed in the study of well-posedness for \eqref{eq:HJe} even prior to the formal development of the theory of viscosity solutions (see, e.g., \cite{Douglis1961,Kruzkov1975,Krylov_book1987}). In contemporary mathematics, the notion of semiconcavity plays a fundamental role across a broad spectrum of disciplines. Prominent applications include optimal control theory and sensitivity analysis (\cite{Hrustalev1978,Cannarsa_Frankowska1991,Fleming_McEneaney2000,Rifford2000,Rifford2002,Bonnet_Frankowska2022}), nonsmooth and variational analysis (\cite{Rockafellar1982,Colombo_Marigonda2006}), as well as metric geometry (\cite{Petrunin2007}). Detailed and systematic treatments of semiconcave functions are provided in the monographs of Cannarsa and Sinestrari~(\cite{Cannarsa_Sinestrari_book}), Villani~(\cite{Villani_book2009}), and Ambrosio, Gigli, and Savar\'e~(\cite{Ambrosio_GigliNicola_Savare_book2008}).

The persistence of singularities (that is, the phenomenon whereby once a singularity appears it necessarily propagates forward in time up to $+\infty$) furnishes one striking manifestation of the intrinsic irreversibility of the evolutionary equation~\eqref{eq:HJe}. A further, closely related indication of this irreversibility is provided by the compactness of the evolution governed by the associated Lax-Oleinik semigroup; we refer the reader to~\cite{Ancona_Cannarsa_Nguyen2016_1,Ancona_Cannarsa_Nguyen2016_2} for detailed expositions of these properties.

To the best of our knowledge, the earliest investigation explicitly devoted to the singularities of viscosity solutions to the evolutionary Hamilton--Jacobi equation~\eqref{eq:HJe} is due to Cannarsa and Soner~(\cite{Cannarsa_Soner1987}). The subsequent identification of semiconcave functions as the natural regularity class for viscosity solutions of~\eqref{eq:HJe}~\cite{Cannarsa_Soner1989} paved the way for a systematic study of singularity propagation for general semiconcave functions, culminating in the results of~(\cite{Ambrosio_Cannarsa_Soner1993}). The specific propagation of singularities along Lipschitz arcs for semiconcave functions was first addressed in~\cite{Albano_Cannarsa1999} and later extended to the full setting of Hamilton-Jacobi equations in the influential work of Albano and Cannarsa~(\cite{Albano_Cannarsa2002}).

In~\cite{Albano_Cannarsa2002}, the authors introduced for the first time the fundamental concept of generalized characteristics for the Hamilton-Jacobi equation~\eqref{eq:HJe}, a notion that has since become a cornerstone for subsequent developments in the field. In the one‑dimensional setting, the underlying idea of generalized characteristics can also be traced back to the earlier work of Dafermos~(\cite{Dafermos1977}) on the Burgers equation.

In the present setting, where $\phi$ is a viscosity solution of the Hamilton--Jacobi equation~\eqref{eq:HJ_wk}, a Lipschitz curve $\gamma: [t_0,\infty) \to M$ with $t_0\in\R$ is termed a generalized characteristic for the pair $(\phi,H)$ if it satisfies the differential inclusion
\begin{equation}\label{eq:GC_intro}
	\dot{\gamma}(s) \in \operatorname{co} H_p\bigl(\gamma(s), D^+\phi(\gamma(s))\bigr), \qquad \text{a.e. } s \geqslant t_0,
\end{equation}
where $D^+\phi(x)$ denotes the set of superdifferential of a semicomcave function $\phi$ at $x$ and $\text{co}$ denotes the convex hull. It was established in~\cite{Albano_Cannarsa2002} that a generalized characteristic exists globally emanating from any initial point $x_0\in M$; moreover, if $x_0=\gamma(t_0)$ is a singular point, a point of nondifferentiablity of $\phi$, and $0 \notin \operatorname{co}\,H_p(x_0, D^+\phi(x_0))$, the singularity propagates locally along this curve (see~\cite{Yu2006} for subsequent refinements). 

Inspired by earlier contributions~(\cite{Bogaevsky2002,Bogaevski2006,Stromberg2013}), Khanin and Sobolevski (\cite{Khanin_Sobolevski2016}) established, under suitable additional assumptions on the initial data, the existence of solutions of the generalized characteristics differential inclusion \emph{without} the convex hull. Such curves are referred to as {strict singular characteristics} (or {broken characteristics} in the terminology of~\cite{Stromberg2013}). As pointed out by Khanin and Sobolevski, the notion of broken characteristics is intimately connected with problems arising in the study of certain stochastic Hamiltonian dynamical systems and the vanishing viscosity limit of Hamilton-Jacobi equations. We refer the reader to~\cite{Bogaevsky2002,Bogaevski2006,Khanin_Sobolevski2010,Khanin_Sobolevski2016} for comprehensive discussions of these intriguing connections. 

\subsection{Generalized Hamiltonian gradient flow}

A significant advance in the variational understanding of the various notions of singular characteristics found in the literature is the concept of {intrinsic singular characteristics}, first introduced in~\cite{Cannarsa_Cheng3} and further developed in~\cite{Cannarsa_Cheng_Fathi2017,Cannarsa_Cheng_Fathi2021}, where it led to important topological applications within weak KAM theory and Riemannian geometry. This intrinsic approach, combined with the notion of maximal slope curve originating from the Energy Dissipation Inequality (EDI) formalism of gradient flows, gives rise to a novel theory of GHGF~(\cite{CCHW2024}). The resulting framework has already demonstrated its considerable power in a variety of related fields, most notably optimal transport, mean field control, and geometric analysis.

Let $\phi$ be a semiconcave function on $M$ and let $H$ be a Tonelli Hamiltonian. 
A locally absolutely continuous curve $\gamma:\mathbb{R}\to M$ is called a {maximal slope curve for the pair $(\phi,H)$} in the EDI formalism, relative to a Borel measurable selection $\mathbf{p}(x)$ of the superdifferential $D^+\phi(x)$, if it satisfies
\begin{align*}
	\phi(\gamma(t)) - \phi(\gamma(0)) = \int_0^t \Bigl\{ L\bigl(\gamma(s), \dot{\gamma}(s)\bigr) + H\bigl(\gamma(s), \mathbf{p}(\gamma(s))\bigr) \Bigr\}\,ds, \ \forall t \in \mathbb{R}.
\end{align*}
%
A crucial observation is that the only meaningful selection for the theory is the {minimal energy selection}
\begin{align*}
	\mathbf{p}^{\#}_{\phi,H}(x) = \arg\min \bigl\{ H(x,p) : p \in D^{+}\phi(x) \bigr\}, \qquad x \in M.
\end{align*}
The map $x \mapsto \mathbf{p}^{\#}_{\phi,H}(x)$ is Borel measurable, and $x \mapsto H\bigl(x, \mathbf{p}^{\#}_{\phi,H}(x)\bigr)$ is lower semicontinuous. If $\phi$ is a viscosity solution of \eqref{eq:HJ_wk}, a curve $\gamma$ is called a {strict singular characteristic} for the pair $(\phi,H)$ if it satisfies
\begin{equation}\label{eq:SSC_intro}
    \dot{\gamma}(t) = H_p\bigl(\gamma(t), \mathbf{p}^{\#}_{\phi,H}(\gamma(t))\bigr), \qquad \text{a.e. } t \in \mathbb{R}.
\end{equation}
It was proved in~\cite{CCHW2024} that any strict singular characteristic also fulfills
\begin{equation}\label{eq:BC_intro}
    \dot{\gamma}^+(t) = H_p\bigl(\gamma(t), \mathbf{p}^{\#}_{\phi,H}(\gamma(t))\bigr), \qquad \forall t \in \mathbb{R},
\end{equation}
which implies, in particular, that the notions of {strict singular characteristics} and {broken characteristics} in \eqref{eq:BC_intro} coincide. 

To elucidate the intrinsic construction of strict singular characteristics, we draw upon a number of significant advances in Hamiltonian dynamical systems that have emerged over the past two decades. These include the Lasry-Lions regularization technique~(\cite{Bernard2007}), the concept of intrinsic singular characteristics, Arnaud's theorem~(\cite{Arnaud2011}), the commutator properties of Lax-Oleinik operators~(\cite{Cannarsa_Cheng_Hong2025}), together with the minimizing movement scheme originally developed in the abstract theory of gradient flows~(\cite{Ambrosio_GigliNicola_Savare_book2008,Ambrosio_Brue_Semola_book2021}) and optimal transport~(\cite{Bernard_Buffoni2006,Bernard_Buffoni2007a,Bernard_Buffoni2007b}). We present our first new result in Sect. \ref{subsec:weak_conv} and \ref{subsec:MM} on a detailed analysis showing that solutions of the generalized characteristic differential inclusion~\eqref{eq:GC_intro} can be understood as minimizing movements of the pair $(\phi,H)$.

In~\cite{CCHW2024}, the authors established the following global propagation result: let $H:T^*M\to\mathbb{R}$ be a Tonelli Hamiltonian and let $\phi$ be a weak KAM solution of the stationary Hamilton--Jacobi equation~\eqref{eq:HJ_wk}, if a curve $\gamma:[0,+\infty)\to M$ satisfies the generalized characteristic inclusion~\eqref{eq:GC_intro} and $\gamma(0)\in\operatorname{Cut}(\phi)$, the set of cut points or cut locus of $\phi$, then $\gamma(t)\in\operatorname{Cut}(\phi)$ for every $t\geqslant 0$. This allow us to study the singular dynamics of \eqref{eq:GC_intro} on $\operatorname{Cut}(\phi)$ for general Tonelli Hamiltonian, compared to the previous works on mechanical system (\cite{Cannarsa_Yu2009}).

According to the DiPerna-Lions theory, an ordinary differential equation such as~\eqref{eq:SSC_intro} naturally induces a corresponding mass transport at the level of probability measures. However, the direct application of this theory is precluded by the fact that the vector field in~\eqref{eq:SSC_intro} does not generate a regular Lagrangian flow, due to the crossing and focusing of forward characteristics. By exploiting the variational construction of strict singular characteristics outlined above, we nonetheless succeed in deriving the continuity equation associated with this irregular Lagrangian semiflow
\begin{align*}
	\begin{cases}
        \dfrac{d}{dt}\,\mu + \operatorname{div}\bigl( H_p(x,\mathbf{p}^{\#}_{\phi,H}(x)) \cdot \mu \bigr) = 0, \\[6pt]
        \mu_0 = \bar{\mu}.
    \end{cases}
\end{align*}
Moreover, along the resulting solution $\mu_t$ we obtain the monotonicity properties $\mu_{t_1}(\operatorname{Cut}(\phi)) \le \mu_{t_2}(\operatorname{Cut}(\phi))$ and $\mu_{t_1}(\overline{\operatorname{Sing}}(\phi)) \le \mu_{t_2}(\overline{\operatorname{Sing}}(\phi))$ for all $0 \le t_1 \le t_2$, which show that the mass concentrated on the cut locus and on the $C^{1,1}$ singular support is non‑decreasing in time (see \cite{CCHW2024}). 

A further extension to the framework of deterministic optimal transport was recently carried out in~\cite{CCSW2025}, which leads to a Hamilton-Jacobi equation
\begin{align*}
	\begin{cases}
        \partial_t U(t,\mu) + \displaystyle\int_{\mathbb{R}^{m}} H\bigl(x, \partial_{\mu}U(t,\mu)\bigr)\,d\mu = 0, & (t,\mu) \in \mathbb{R}^{+} \times \mathscr{P}_c(\mathbb{R}^{m}), \\[8pt]
        U(0,\mu) = \phi(\mu), & \mu \in \mathscr{P}_c(\mathbb{R}^{m}).
    \end{cases}
\end{align*}
on $\mathscr{P}_c(\mathbb{R}^{m})$, the space of all all Borel probability measures on $\R^m$ with compact support, where
\begin{align*}
	\Phi(\mu):=\int_{\mathbb{R}^{m}} \phi(x)\,d\mu, \qquad \mu \in \mathscr{P}(\mathbb{R}^{m}),
\end{align*}
is called the potential energy functional of a given semiconcave function $\phi$. A more refined analysis of this Hamilton-Jacobi equation on the Wasserstein space, the mass transport of measures driven by the GHGF, and the corresponding propagation of singularities was carried out in~\cite{CCSW2025}.

\subsection{Further new progress and open problems}
\label{subsec:intro3}

In the final part of this introductory section, we present a second new result concerning the invariant measures of the forward GHGF. More precisely, we prove that the only invariant measures of the semi-flow $(\mathcal{S}_{\phi,H}^{+}, \mathbf{P}^{t})$ that attain the Ma\~n\'e critical value $c[H]$ are exactly the projected Mather measures (Theorem~\ref{thm:Mather_max}). This provides a purely dynamical characterization of Mather's minimal measures within the framework of strict singular characteristics and highlights the distinguished role played by the Aubry set in the long-time behavior of the generalized flow.

To study the singular dynamics of \eqref{eq:GC_intro} on $\operatorname{Cut}(\phi)$, we should emphasize the essential difficulty here. There is no uniqueness result of the system of generalized characteristic inclusion~\eqref{eq:GC_intro}, even when considering the system of strict singular characteristics \eqref{eq:SSC_intro}. To our knowledge, the only model admits uniqueness of the solution of \eqref{eq:GC_intro} or \eqref{eq:SSC_intro} is when the Hamiltonian $H(x,p)$ is quadratic on $p$-variable, or we call the Riemannian case. For this reason, we have to borrow some idea of lifting method to consider such dynamical systems on the path space $C^0_+:=C^0([0,+\infty),M)$, endowed a distance $d_+(\gamma_1,\gamma_2)=\sup_{t\geqslant0}e^{-t}d(\gamma_1(t),\gamma_2(t))$, and a push-forward semigroup $\mathbf{P}^t:C^0_+\to C^0_+$, $t\geqslant0$ given by
\begin{align*}
	\mathbf{P}^t(\gamma)(s)=\gamma(t+s),\qquad\forall\gamma\in C^0_+.
\end{align*}
We define the evaluation map $\mathbf{V}^t:C^0_+\to M$, $t\geqslant0$ by $\mathbf{V}^t(\gamma)=\gamma(t)$. We regard $\mathcal{S}^+_{\phi,H}$, the set of all the solution of \eqref{eq:SSC_intro}, as a subspace of $(C^0_+,d_+)$. 

By classic Mather theory, for any $x\in\mathscr{M}(H)$, the (projected) Mather set, there exists a unique $V(x)\in T_xM$ such that $(x,V(x))\in\widehat{\mathscr{M}(H)}$, the Mather set in $TM$. Let $f_L:\mathscr{M}(H)\to\widehat{\mathscr{M}(H)}$, $f_L(x)=(x,V(x))$. Then $f_L$ is continuous. Define
\begin{align*}
	\gamma(x,t)=\pi\circ\Phi_L^t\circ f_L(x),\qquad x\in  \mathscr{M}(H),\ t\in[0,+\infty),
\end{align*}
and $\Gamma(x)(\cdot)=\gamma(x,\cdot)$ is a map from $\mathscr{M}(H)$ to $C^0_+$. The map $\Gamma$ is also continuous. For any weak KAM solution $\phi$ of \eqref{eq:HJ_wk} we have $\Gamma(\mathscr{M}(H))\subset\mathcal{S}^+_{\phi,H}$.

Then, we have the following new results on the invariant measures of the GHGF $(\mathcal{S}^+_{\phi,H},\mathbf{P}^t)$:
\begin{enumerate}
	\item We have the following characterization of Ma\~n\'e critical value:
	\begin{align*}
		c[H]=\max_{\phi\in\mathrm{SCL}\,(M)}\sup_{\mu\in\mathcal{IM}(\phi,H)}\int_{\mathcal{S}^+_{\phi,H}}H(\gamma(0),\mathbf{p}_{\phi,H}^\#(\gamma(0)))\ d\mu,
	\end{align*}
	where $\mathcal{IM}(\phi,H)$ is the set of all invariant measure of the semigroup $(\mathcal{S}^+_{\phi,H},\mathbf{P}^t)$.
	\item If $\phi$ is a weak KAM solution of \eqref{eq:HJ_wk}, $\mu_0\in\mathscr{M}_H$, then $\mu=\Gamma_\#\mu_0\in\mathcal{IM}(\phi,H)$ and $(\mathbf{V}^0)_\#\mu=\mu_0$. Moreover,
	\begin{equation}\label{eq:Mane_cri_intro}
		\int_{\mathcal{S}^+_{\phi,H}}H(\gamma(0),\mathbf{p}_{\phi,H}^\#(\gamma(0)))\ d\mu=c[H].
	\end{equation} 
	\item If $\phi\in\mathrm{SCL}\,(M)$ and $\mu\in\mathcal{IM}(\phi,H)$ satisfy \eqref{eq:Mane_cri_intro}, then $\mu_0=(\mathbf{V}^0)_\#\mu\in\mathscr{M}_H$ and $\mu=\Gamma_\#\mu_0$.
\end{enumerate}

Upon further reflection, it has become increasingly evident that the GHGF, together with the singular dynamics it induces on the cut locus, is intimately connected with several deep problems in Hamiltonian dynamical systems, mean field control, optimal transport, geometric analysis, as well as ergodic theory and statistical mechanics. In light of these connections, we devote the final section of the paper to collecting and discussing a number of open problems that, in our opinion, will help stimulate further progress in these directions. We refer the reader to a previous survey \cite{Cannarsa_Cheng2021a} and to the open problems discussed in \cite{Chen_Cheng2016,Cannarsa_Cheng_2020proceeding}.

The paper is organized as follows. Section~\ref{sec:generalized} is devoted to a systematic review of generalized characteristics and the propagation of singularities for semiconcave functions and weak KAM solutions. We recall the notion of intrinsic singular characteristics, the minimizing movement construction, and the weak convergence method via Young measures that leads to solutions of the generalized characteristic differential inclusion, which is a new result. In Section~\ref{sec:HGF}, we afford a more detailed review of the recent new theory of variational construction of strict singular characteristics. The main new result of the paper---a characterization of Mather measures as the unique invariant measures attaining the Ma\~n\'e critical value---is presented in this section (Theorem~\ref{thm:Mather_max}). Section~\ref{sec:Eulerian} shifts from the Lagrangian to the Eulerian viewpoint. We review the mass transport aspects of the GHGF. Finally, in Section~\ref{sec:open}, we collect and discuss a number of open problems. We believe that the GHGF framework developed in this paper provides a natural variational and dynamical language to address these challenges.

\medskip

\noindent\textbf{Acknowledgements.} Wei Cheng was partly supported by the National Natural Science Foundation of China (Grant No. 12231010). Jiahui Hong was partly supported by the National Natural Science Foundation of China (Grant No. 12501245). Both authors thank Piermarco Cannarsa for his contribution to the Young measure approach presented in Section~\ref{subsec:weak_conv}, which remains a keystone for further progress using the minimizing movement and EDI formalism of GHGF. The authors also thank Tianqi Shi for corrections to this paper and for useful suggestions.

\section{Variational construction of generalized characteristics}
\label{sec:generalized}

We say a Lipschitz curve $\mathbf{x}:[0,\infty)\to M$, $\mathbf{x}(0)=x\in M$, propagates the singularities of a semiconcave function $\phi$ \emph{locally} if exists $\delta>0$ depending on the initial point $x\in M$ such that $\mathbf{x}(t)\in\text{Sing}\,(\phi)$ for $t\in(0,\delta_x]$ provided $x\in\text{Sing}\,(\phi)$. We say such a curve $\mathbf{x}$ propagates the singularities of $\phi$ \emph{globally} if $\mathbf{x}$ propagates the singularities of $\phi$ on $(0,\delta_x]$ and $\delta_x$ is independent of $x$. If $\phi$ is a weak KAM solution, we raise similar notions of propagation of cut points (locally and globally).

Before proceeding further, it is necessary to distinguish several subsets of $M$ associated with a weak KAM solution $\phi$ of the stationary Hamilton--Jacobi equation~\eqref{eq:HJ_wk}, according to the regularity properties of $\phi$.

\begin{itemize}
    \item We define the \emph{cut time function} of $\phi$ by
    \begin{equation}\label{eq:cut_time}
    	\begin{split}
    		\tau_\phi(x):= &\,\sup \bigl\{ t \geqslant 0 : \exists\ \text{a $(\phi,H)$-calibrated curve}\\
    		& \gamma \in C^1([0, t], M)\ \text{s.t.}\ \gamma(0) = x\bigr\}.
    	\end{split}
    \end{equation}
    The set of all cut points of $\phi$ is denoted by $\operatorname{Cut}(\phi)$ and is called the \emph{cut locus} of $\phi$. $\operatorname{Cut}\,(\phi)=\{x\in M: \tau_\phi(x)=0\}$. We call $\mathcal{I}\,(\phi)=\{x\in M: \tau_\phi(x)=+\infty\}$ the \emph{projected Aubry set with respect to $\phi$}.
    \item A point $x\in M$ is called a \emph{regular point} of $\phi$ if it is not a cut point. The set of all regular points of $\phi$ is denoted by $\operatorname{Reg}(\phi)$ and is referred to as the \emph{regular set} of $\phi$.
    Then 
    \item The \emph{differentiability set} of $\phi$ is defined by $\operatorname{Diff}(\phi):=\{x\in M:\phi\text{ is differentiable at }x\}$, and the \emph{singular set} of $\phi$ is defined as $\operatorname{Sing}(\phi):= M\setminus\operatorname{Diff}(\phi)$.
    \item By Alexandrov's theorem, $\phi$ is twice differentiable almost everywhere. We denote by $\operatorname{Alex}(\phi)$ the \emph{Alexandrov set} of points in $M$ where $\phi$ admits a second order Taylor expansion (i.e., is twice differentiable in the Alexandrov sense). Each point in $\operatorname{Alex}(\phi)$ is termed an \emph{Alexandrov point}.
    \item The \emph{$C^{1,1}$ singular support} of $\phi$ is defined as $\operatorname{Singsupp}_{C^{1,1}}(\phi):=\overline{\operatorname{Sing}(\phi)}$.
\end{itemize}

The following inclusions among these sets are straightforward consequences of the definitions:
\begin{itemize}
    \item $\operatorname{Alex}(\phi)\subset\operatorname{Reg}(\phi)\subset\operatorname{Diff}(\phi)$;
    \item $\operatorname{Sing}(\phi)\subset\operatorname{Cut}(\phi)\subset\overline{\operatorname{Sing}(\phi)}=\operatorname{Singsupp}_{C^{1,1}}(\phi)$;
    \item $\operatorname{Alex}(\phi)\cap\operatorname{Cut}(\phi)=\varnothing$.
\end{itemize}

Let $H$ be a Tonelli Hamiltonian and let $A_t(x,y)$ be the associated fundamental solution given by
\begin{align*}
	A_t(x,y)=\inf_{\xi\in\Gamma^t_{x,y}}\int^t_0L(\xi(s),\dot{\xi}(s))\ ds,\quad t>0,\ x,y\in M,
\end{align*}
with $\Gamma^t_{x,y}$ the family of absolutely continuous curve $\xi:[0,t]\to M$ with $\xi(0)=x$, $\xi(t)=y$. For any continuous real-valued function $\phi$ on $M$ and any Tonelli Lagrangian $L$, as in weak KAM theory, we define the Lax-Oleinik semigroups $\{T^{\pm}_t\}_{t\geqslant0}$ as operators
\begin{align*}
	\begin{split}
		T^+_t\phi(x)=&\,\sup_{y\in M}\{\phi(y)-A_t(x,y)\},\\
	T^-_t\phi(x)=&\,\inf_{y\in M}\{\phi(y)+A_t(y,x)\},
	\end{split}
	\quad x\in M.
\end{align*}
We call $\phi$ a \emph{weak KAM solution} of \eqref{eq:HJ_wk} if $T^-_t\phi+c[H]t=\phi$ for all $t\geqslant 0$. 

\subsection{Generalized characteristics}

To explain the application of the generalized characteristics to the problem of propagation of singularities of semiconcave functions and especially the viscosity solutions, we should recall various type singular characteristics in the literature. 

The notion of generalized characteristics, which plays an important role to describe the evolution of singularities of viscosity solutions, was first introduced in \cite{Albano_Cannarsa2002} in the frame of Hamilton-Jacobi equations ($M=\R^n$). In the case when $H$ is a Tonelli Hamiltonian, a Lipschitz curve $\mathbf{x}:[0,\infty)\to\R^n$ satisfying the following differential inclusion is called a \emph{generalized characteristic} from $x\in\R^n$:
\begin{equation}\label{eq:GC}\tag{GC}
	\begin{cases}
		\dot{\mathbf{x}}(t)\in\text{co}\,H_p(\mathbf{x}(t),D^+\phi(\mathbf{x}(t))),\quad a.e.\  t\in[0,+\infty)\\
		\mathbf{x}(0)=x,
	\end{cases}
\end{equation}
for any $\phi\in\text{\rm SCL}\,(M)$. We call such a curve $\mathbf{x}$ a \emph{strict generalized characteristic} (\cite{Cannarsa_Cheng2021b}) if we rule out the convex hull in \eqref{eq:GC}. The authors proved (\cite{Albano_Cannarsa2002}) if $\phi$ is a viscosity solution of the Hamilton-Jacobi equation
\begin{align*}
	H(x,D\phi(x))=0,\qquad x\in\Omega,
\end{align*}
where $\Omega\subset\R^n$ is an open set, then there exists a generalized characteristic $\gamma$ and $\delta>0$ such that $\gamma(t)\in\text{Sing}\,(\phi)$ for $t\in[0,\delta]$ provided $x\in \text{Sing}\,(\phi)$. 
Approximation method by standard convolution used in \cite{Yu2006,Cannarsa_Yu2009} was also applied to obtain further extended properties for any pair $(\phi,H)$ with $H$ a $C^1$ and strictly convex Hamiltonian, and $\phi$ a semiconcave function. 

\begin{figure}
	\begin{center}
\begin{tikzpicture}
\clip (-2.5,-2.5) rectangle (2.5,2.5);
\draw [red][name path=curve 1] (-2,-2) .. controls (-1,.5) .. (2,2);
\draw (-2,-2) node[left] {$x$};
\draw (2,2) node[right] {$y_t$};
\draw (-1.2,0) node[left][red] {$\displaystyle \mathbf{y}_x(s)$};
\draw [blue][name path=curve 2] (-2,-2) .. controls (.8,-.5) .. (2,2);
\fill [name intersections={of=curve 1 and curve 2, by={a,b}}] 
        (a) circle (2pt)
        (b) circle (2pt);
\draw (1.2,0) node[right][blue] {$\displaystyle \xi_t(s)$};
\draw [blue][name path=curve 3] (-2,-2) .. controls (.6,-.4) .. (0.9,1.45);  
\fill [name intersections={of=curve 1 and curve 3, by={a,c}}] 
        (a) circle (2pt)
        (c) circle (2pt);
        \draw (0.9,1.45) node[above] {$y_{t'}$};
\draw [blue][name path=curve 4] (-2,-2) .. controls (0.35,-.24) .. (0.4,1.24); 
\fill [name intersections={of=curve 1 and curve 4, by={a,d}}] 
        (a) circle (2pt)
        (d) circle (2pt);
\draw (0.4,1.24) node[above] {$y_{t''}$};
\end{tikzpicture}
\end{center}
\caption{Construction of intrinsic singular characteristics}
\end{figure}

The notion of intrinsic singular characteristics comes from the paper \cite{Cannarsa_Cheng3} and \cite{Cannarsa_Cheng_Fathi2017,Cannarsa_Cheng_Fathi2021}. Let $\phi$ be a weak KAM solution of \eqref{eq:HJ_wk}. For any $x\in M$ we define a curve $\mathbf{y}_x:[0,\tau(\phi)]\to M$ with $\tau(\phi)>0$ determined below
\begin{equation}\label{eq:ISC}\tag{ISC}
	\mathbf{y}_x(t):=
	\begin{cases}
		x,& t=0,\\
		\arg\max\{\phi(y)-A_t(x,y): y\in M\},& t\in(0,\tau(\phi)].
	\end{cases}
\end{equation}
The construction of the curve $\mathbf{y}_x(t)$ is explained as follows
\begin{itemize}
	\item In the definition $T^+_t\phi=\sup\{\phi(y)-A_t(x,y): y\in M\}$, the supremum is achieved for any $t>0$. Indeed, there exists a constant $\lambda_0>0$ depending on $\text{Lip}\,(\phi)$ such that if $y$ is a maximizer of $\phi(\cdot)-A_t(x,\cdot)$ then $y\subset B(x,\lambda_0 t)$.
	\item Taking $\lambda=\lambda_0+1$ and applying some regularity result for $A_t(x,y)$ (see, for instance, \cite{Cannarsa_Cheng3}), we conclude that, there exists $t_0>0$ such that for any $t\in(0,t_0)$ and $x\in M$, the functions $y\mapsto A_t(x,y)$, $y\in B(x,\lambda t)$ is of class $C^2$ and convex with constant $C_2/t$.
	\item Since $\phi$ is semiconcave with constant $C_1$, the function $\phi(\cdot)-A_t(x,\cdot)$ is strictly concave provided $C_1-C_2/t<0$. Take $0<\tau(\phi)\leqslant t_0$ such that $C_1-C_2/\tau(\phi)<0$.
	\item Since $T^+\phi$ is naturally semiconvex, we conclude that if $t\in(0,\tau(\phi))$, then $T^+_t\phi\in C^{1,1}(M)$ and the maps $x\mapsto A_t(x,y)$, $x\in B(y,\lambda t)$ and $y\mapsto A_t(x,y)$, $y\in B(x,\lambda t)$ are convex with constant $C_2/t$ by regularity property of fundamental solution for short time.
\end{itemize}
Observe that the construction in \eqref{eq:ISC} can be extended to $[0,+\infty)$, by an \emph{Euler segment method}, since $\tau(\phi)$ is independent of $x$. We call $\mathbf{y}_x$ an \emph{intrinsic singular characteristic} from $x$ (with time step $\tau=\tau(\phi)$). A very important observation from this construction is the following global propagation result: $\mathbf{y}_x(t)\in\text{Sing}\,(\phi)$ for all $t>0$ provided that $x\in\text{Cut}\,(\phi)$. This construction has a quite natural philosophy to consider the interaction of \emph{forward-backward evolution with irreversibility}. However, $\mathbf{y}_x$ \emph{does not satisfies semigroup properties!} Thus a natural idea is to let the time step $\tau(\phi)\to 0^+$.

Invoking a celebrated paper by Khanin and Sobolevski (\cite{Khanin_Sobolevski2016}, see also \cite{Stromberg_Ahmadzadeh2014}), we consider the equation
\begin{equation}\label{eq:SGC2}\tag{BC}
	\begin{cases}
		\dot{\gamma}^+(t)=H_p(\gamma(t),\mathbf{p}^{\#}_{\phi,H}(\gamma(t))),\qquad\forall t\in[0,+\infty),\\
		\gamma(0)=x.
	\end{cases}
\end{equation}
A solution of \eqref{eq:SGC2} is called a \emph{broken characteristic} in the literature (\cite{Stromberg2013}). For the local existence of broken characteristics and strict generalized characteristics, there are several proofs (\cite{Khanin_Sobolevski2016,Cannarsa_Cheng2021a,Cheng_Hong2022a,Cheng_Hong2023}). The original one \cite{Khanin_Sobolevski2016} used vanishing viscosity approximation methods applying to  evolutionary Hamilton-Jacobi equation, and later \cite{Cannarsa_Cheng2021a} used a standard approximation by convolution for the static equation. Local existence results for \eqref{eq:SGC2} for viscosity solutions to evolutionary Hamilton-Jacobi equations with smooth initial data and weak KAM solution in 2D were obtained (\cite{Cheng_Hong2022a,Cheng_Hong2023}) based on the analysis of the underlying characteristic systems.

\subsection{Birth of singularities: a geometric intuition}

\begin{figure}[b]
\begin{tikzpicture} [scale=1.5]
\draw[->] (-2.6,0) -- (-0.5,0) node[right]{$x$};
\draw[->] (1.4,0) -- (3.5,0) node[right]{$x$};
\draw[->] (-2.5,-0.1) -- (-2.5,2) node[right]{$p=DT_t^+\phi(x)$};  
\draw[->] (1.5,-0.1) -- (1.5,2) node[right]{$p=D^+\phi(x)$};
\draw [domain=-2.4:-0.8,black,thick] plot(\x,{3.15476*\x*\x*\x+14.7679*\x*\x+21.4113*\x+10.508});
\draw[domain=1.6:2.3,black,thick] plot (\x,{-1.02*(\x-2.3)*(\x-2.3)+1.5});
\draw[red,thick] (2.3,1.5) .. controls (2.3,1.1).. (2.3,0.4);
\draw[domain=2.3:3.3,black,thick] plot (\x,{0.494*(\x-2.3)*(\x-2.3)+0.4});
\draw[red] (2.3,1) node[right] {$\scriptstyle D^+\phi(x_0)$};
\filldraw[red] (2.3,0) circle (0.5pt) node[anchor= south west] {$\scriptstyle x_0$};
\draw[red,dashed] (2.3,0.4) -- (2.3,0);
\draw[->] (-0.2,1) .. controls (0.5,1.1) .. (1.2,1);
\draw (0.5,1.1) node[above]{$\Phi_H^t$};
\end{tikzpicture}
\caption{Illustration of Arnaud's Theorem}
\end{figure}

Let $\phi\in\text{\rm SCL}\,(M)$, the space of semiconcave functions with linear modulus on $M$,  and
\begin{align*}
	\text{\rm graph}\,(D^+\phi)=&\,\{(x,p): x\in M, p\in D^+\phi(x)\subset T^*_xM\}.
\end{align*}

\begin{The}[\cite{Bernard2007,Arnaud2011,Bernard2012}]\label{thm:T^-T^+1}
Let $\phi\in\text{\rm SCL}\,(M)$ and $t_0>0$ such that $T^+_{t_0}\phi\in C^{1,1}(M)$.
\begin{enumerate}
	\item $T^+_t\phi\in C^{1,1}(M)$ for all $t\in(0,t_0]$ and $T^-_{t}\circ T^+_{t}\phi=\phi$ for all $t\in[0,t_0]$.
	\item $T^+_t\phi=T^-_{t_0-t}\circ T^+_{t_0}\phi$ for all $t\in[0,t_0]$.
	\item \label{itm:Arnaud}We have
	\begin{align*}
		\text{\rm graph}\,(DT^+_t\phi)=\Phi_H^{-t}(\text{\rm graph}\,(D^+\phi)),\qquad t\in(0,t_0].
	\end{align*}
\end{enumerate} 
\end{The}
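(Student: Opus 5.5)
The argument rests on three ingredients: the duality $\phi\le T^-_tT^+_t\phi$ for any continuous $\phi$, the semigroup property of $T^\pm_t$, and the characteristic description of maximizers. Throughout, recall that semiconcavity of $\phi$ makes the supremum in $T^+_t\phi(x)$ attained. Recall also that at a maximizer $y$ of $\phi(\cdot)-A_t(x,\cdot)$, the minimizing curve $\xi$ from $x$ to $y$ has terminal momentum $L_v(\xi(t),\dot\xi(t))=\partial_yA_t(x,y)$. This momentum lies in $D^+\phi(y)$, because $A_t(x,\cdot)$ is semiconcave and $\phi-A_t(x,\cdot)$ attains a maximum at $y$, which forces both to be differentiable there with matching derivatives in the superdifferential sense. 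Symmetrically, $-\partial_xA_t(x,y)\in D^-T^+_t\phi(x)$.

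For (1), I will first show that $T^+_{t_0}\phi\in C^{1,1}$ forces uniqueness of the backward-in-$x$ data. At every $x$, $T^+_{t_0}\phi$ is differentiable. Every maximizer $y$ gives an extremal $\xi$ with $\xi(0)=x$ and initial momentum $DT^+_{t_0}\phi(x)$, so $\xi$ is uniquely determined by the Hamiltonian flow. For $t<t_0$, take the point $\xi(t_0-t)$. By the dynamic programming principle (the semigroup property $T^+_{t_0}=T^+_t\circ T^+_{t_0-t}$, reparametrized), the restriction of $\xi$ realizes both $T^+_t\phi(\xi(t_0-t))$ and $T^+_{t_0-t}T^+_t\phi$. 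It then follows that $T^+_t\phi$ is touched from above by a $C^{1,1}$-type function, namely $T^+_{t_0}\phi$ transported plus the action. It is also naturally semiconvex. Semiconvexity together with this upper $C^{1,1}$ contact at every point, with uniform constants, gives $T^+_t\phi\in C^{1,1}$. Next, $T^-_tT^+_t\phi\ge\phi$ always holds, since we may take $y=x$ in the definition of $T^+_t\phi$. For the reverse inequality at a point $y$, pick $p\in D^+\phi(y)$ that is reachable, i.e.\ a limit of reachable gradients. Then run the flow backward to get $x$ with $y$ a maximizer for $T^+_t\phi(x)$, so that $\phi(y)=T^+_t\phi(x)+A_t(x,y)\ge T^-_tT^+_t\phi(y)$. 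Producing such an $x$ for every $y$ is exactly the key step. I would obtain it by a degree/continuity argument: the map $x\mapsto y(x)$ from $M$ is continuous, because the maximizer is unique, and it is homotopic to the identity as $t\to0$, hence surjective.

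Item (2) then follows formally. Applying (1) to $T^+_t\phi$ gives $T^+_{t_0}\phi=T^+_{t_0-t}T^+_t\phi$. We therefore get $T^-_{t_0-t}T^+_{t_0}\phi=T^-_{t_0-t}T^+_{t_0-t}(T^+_t\phi)=T^+_t\phi$, using the identity $T^-_sT^+_s\psi=\psi$ for $\psi=T^+_t\phi$ and $s=t_0-t$. This identity is valid because $T^+_s\psi=T^+_{t_0}\phi\in C^{1,1}$ and $\psi$ is semiconcave, being $C^{1,1}$.

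For (3), the inclusion $\Phi^{t}_H(\mathrm{graph}\,DT^+_t\phi)\subset\mathrm{graph}\,D^+\phi$ comes from the momentum identities above. For the converse, take $(y,p)$ with $p\in D^+\phi(y)$. Then set $(x,q)=\Phi_H^{-t}(y,p)$, and we must show $q=DT^+_t\phi(x)$. For reachable $p$ this holds by limits, using continuity of $DT^+_t\phi$. For a general $p$ in the convex hull, the fibers of $\Phi^{-t}_H(\mathrm{graph}\,D^+\phi)$ over $y$ form the image of a convex set. We already know this image projects into a Lipschitz graph on the reachable extreme points. To finish, I would use that $D^+\phi(y)\ni p\mapsto\pi\Phi^{-t}_H(y,p)$ is injective, and that the $C^{1,1}$ graph is a Lipschitz Lagrangian graph: the image of a connected segment of the fiber must stay in that graph, because $\mathrm{graph}(D^+\phi)$ is the limit of graphs of $D(\phi_\varepsilon)$ for semiconcave regularizations $\phi_\varepsilon=T^-_\varepsilon T^+_\varepsilon\phi$. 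This last density/convex-hull step is the main obstacle. I would attack it through the Lasry--Lions identity $\phi=T^-_tT^+_t\phi$: it gives, for every $p\in D^+\phi(y)$, a point $x$ whose backward calibrated curve for $T^-_t$ ends at $y$ with momentum $p$, exploiting that $D^+$ of $T^-_t\psi$ at $y$ is the convex hull of the terminal momenta of minimizers.
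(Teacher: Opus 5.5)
The paper does not prove this theorem; it quotes it from Bernard and Arnaud. I have therefore assessed your argument on its own.

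\textbf{Items (1) and (2).} These are essentially sound. The degree argument (a self-map of the closed manifold $M$ homotopic to the identity is onto) is the right tool for $T^-_t\circ T^+_t\phi=\phi$, and (2) follows as you say. Two small repairs are needed.

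First, your $C^{1,1}$ step for $t<t_0$ silently needs the same degree argument. $T^+_t\phi$ is touched from above by $T^+_{t_0}\phi(x)+A_{t_0-t}(x,\cdot)$ only at points of the form $\pi\Phi^{t_0-t}_H(x,DT^+_{t_0}\phi(x))$. You must therefore know that every $z\in M$ is such a point; the homotopy $s\mapsto\pi\Phi^{s}_H(x,DT^+_{t_0}\phi(x))$ gives this.

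Second, a maximum of $\phi-A_t(x,\cdot)$ at $y$ does not force $\phi$ to be differentiable at $y$ (take $\phi=-|x|$). It only gives $D^+_yA_t(x,y)\subset D^+\phi(y)$, which is all you actually use.

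\textbf{The genuine gap is in (3)}, for $p\in D^+\phi(y)$ that is not a reachable gradient. Your proposed remedy does not close it.

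The formula $D^+(T^-_t\psi)(y)=\mathrm{co}\{\text{terminal momenta of minimizers}\}$ only says that $D^+\phi(y)$ is the convex hull of the realized momenta. Statement (3) is precisely the claim that the realized set is \emph{already} convex: every $p\in D^+\phi(y)$ is the terminal momentum of some arc $s\mapsto\Phi^s_H(x,DT^+_t\phi(x))$. So invoking the convex hull formula is circular.

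The alternative route fails as well. $\phi_\varepsilon=T^-_\varepsilon T^+_\varepsilon\phi$ is not a regularization at all, since by your own item (1) it equals $\phi$ for $\varepsilon\le t_0$. The injectivity and ``connected segment stays in the Lipschitz graph'' claims are asserted without proof.

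\textbf{The missing idea is short-time strict concavity}, which uses the whole convex set $D^+\phi(y)$ rather than its reachable points.

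Choose $s\in(0,t_0]$ so small that $A_s(x,\cdot)$ is $C^2$ and uniformly convex with constant $C_2/s>C_1$ on a ball $B(x,\lambda s)$ containing all maximizers of $\phi-A_s(x,\cdot)$ (the facts listed after \eqref{eq:ISC}). For $p\in D^+\phi(y)$, put $(x,q)=\Phi^{-s}_H(y,p)$. Since $|p|\le\mathrm{Lip}(\phi)$, $y$ lies in that ball and the short arc from $x$ to $y$ is the unique minimizer. Hence $\partial_yA_s(x,y)=p$ and $0\in D^+\bigl(\phi-A_s(x,\cdot)\bigr)(y)$. Strict concavity then makes $y$ the unique maximizer, so $q=DT^+_s\phi(x)$. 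This proves (3) at time $s$.

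For $t\in(s,t_0]$, write $T^+_t\phi=T^+_{t-s}\psi$ with $\psi=T^+_s\phi\in C^{1,1}$. Now $D^+\psi(z)=\{D\psi(z)\}$, so the convex-hull issue disappears. Your momentum identity, together with surjectivity of $x\mapsto\pi\Phi^{t-s}_H(x,DT^+_t\phi(x))$, gives $\Phi^{t-s}_H(\mathrm{graph}\,DT^+_t\phi)=\mathrm{graph}\,D\psi=\Phi^{-s}_H(\mathrm{graph}\,D^+\phi)$. This is (3) at time $t$.
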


Such Lasry-Lions regularization type results in Theorem \ref{thm:T^-T^+1} is essential for our analysis of the birth of singularities and irreversibility. Similar work as statement (\ref{itm:Arnaud}) in Theorem \ref{thm:T^-T^+1}, known as Arnaud's theorem, was also obtained in \cite{Bianchini_Tonon2012} on the study of the SBV regularity of viscosity solutions in Euclidean case, and was rediscovered in \cite{Cannarsa_Cheng_Hong2025} from control aspects of Lax-Oleinik commutators.

From Arnaud theorem, an infinitesimal generation of singularities can be understood as certain projective singularities. 
Therefore, in it natural to send the time step to $0$ in \eqref{eq:ISC}.

\begin{Pro}[\cite{CCHW2024}]\label{pro:ISC_t}
Fix $x\in M$, $t\in [0,\tau(\phi)]$ and $\mathbf{y}_x$ is defined in \eqref{eq:ISC}. Let $\xi_t\in\Gamma^t_{x,\mathbf{y}_x(t)}$ be the minimal curve for $A_t(x,\mathbf{y}_x(t))$, then
\begin{align*}
	\xi_t(s)=\pi_x\Phi^s_H(x,DT^+_t\phi(x))=\pi_x(\xi_t(s),DT^+_{t-s}\phi(\xi_t(s))),\quad \forall s\in[0,t),
\end{align*}
where $\Phi^s_H$ denotes the associated Hamiltonian flow. In local coordinates, $\xi_t$ satisfies differential equation
\begin{equation}\label{eq:vector_field1}
	\begin{cases}
		\dot{\xi}_t(s)=H_p(\xi_t(s),DT^+_{t-s}\phi(\xi_t(s))),& s\in[0,t),\\
		\xi_t(0)=x.
	\end{cases}
\end{equation}
Moreover, there exists a constant $C>0$ such that
	\begin{align*}
		\|\mathbf{y}_x-\xi_t\|_{\infty}\leqslant Ct.
	\end{align*}	
\end{Pro}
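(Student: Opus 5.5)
The argument has three parts: identify the initial covector of $\xi_t$, transport the gradient relation along $\xi_t$, and bound the gap between the curve and its endpoint.

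\emph{Step 1: the initial covector.} Fix $x$ and $t\in(0,\tau(\phi)]$, and write $y=\mathbf{y}_x(t)$, the unique maximizer of $\phi(\cdot)-A_t(x,\cdot)$ given by strict concavity. Let $\xi_t$ be a minimizer for $A_t(x,y)$. Since $y$ is chosen optimally, for every $z$ we have
\begin{align*}
T^+_t\phi(z)\geqslant \phi(y)-A_t(z,y), \qquad T^+_t\phi(x)= \phi(y)-A_t(x,y).
\end{align*}
Hence $z\mapsto -A_t(z,y)$ touches $T^+_t\phi$ from below at $x$. Both functions are differentiable at $x$: $T^+_t\phi\in C^{1,1}$ by Theorem \ref{thm:T^-T^+1}, and $A_t(\cdot,y)$ is $C^2$ on $B(y,\lambda t)$. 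Therefore $DT^+_t\phi(x)=-D_xA_t(x,y)$. By the first variation formula for minimizers, $-D_xA_t(x,y)=L_v(x,\dot\xi_t(0))$. So $(\xi_t(s),L_v(\xi_t,\dot\xi_t))=\Phi^s_H(x,DT^+_t\phi(x))$, and projecting gives the first equality.

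\emph{Step 2: the gradient along the curve.} Use item (2) of Theorem \ref{thm:T^-T^+1} with $t_0=t$, which gives $T^+_s\phi=T^-_{t-s}\circ T^+_t\phi$; by the dual of this semigroup identity, $T^+_{t-s}\phi$ is obtained from $T^+_t\phi$ by the negative Lax--Oleinik flow. Restricting the maximization to $y$, we get for $s\in[0,t)$
\begin{align*}
T^+_{t-s}\phi(\xi_t(s))=\phi(y)-A_{t-s}(\xi_t(s),y)
\end{align*}
together with the corresponding inequality at other points. Indeed, $T^+_t\phi(x)=T^+_{t-s}\phi(\xi_t(s))-A_s(x,\xi_t(s))$, because $\xi_t$ is a minimizer and dynamic programming splits $A_t$ along it. This shows $y$ is a maximizer for $T^+_{t-s}\phi$ at $\xi_t(s)$. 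Repeating Step 1 at the point $\xi_t(s)$ with time $t-s\leqslant\tau(\phi)$ gives $DT^+_{t-s}\phi(\xi_t(s))=L_v(\xi_t(s),\dot\xi_t(s))$. This is the second identity. Applying $H_p(x,L_v(x,v))=v$ then yields \eqref{eq:vector_field1}.

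\emph{Step 3: the $O(t)$ bound.} Every maximizer lies in $B(x,\lambda_0 t)$, so $d(x,y)\leqslant\lambda_0 t$. The minimizer $\xi_t$ has velocity bounded by a constant depending only on $\mathrm{Lip}\,\phi$, since its covector is $DT^+_{t-s}\phi$, whose norm is bounded by the uniform Lipschitz constant of $T^+$. Hence $d(\xi_t(s),x)\leqslant Cs$. It is natural to compare the two curves at matching times, $d(\mathbf{y}_x(s),\xi_t(s))\leqslant d(\mathbf{y}_x(s),x)+d(x,\xi_t(s))\leqslant(\lambda_0+C)s\leqslant C't$, which gives $\|\mathbf{y}_x-\xi_t\|_\infty\leqslant Ct$ over $[0,t]$.

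\emph{Main obstacle.} The delicate point is Step 2: justifying that the whole minimizer stays in the region where $A_{t-s}(\cdot,y)$ is $C^2$, namely $B(y,\lambda(t-s))$. I would handle this with the a priori velocity bound, taking $\lambda=\lambda_0+1$ large enough. The other delicate point is the differentiability of $T^+_{t-s}\phi$ at interior points of the minimizer, which follows from Theorem \ref{thm:T^-T^+1}(1).
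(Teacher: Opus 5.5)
Your proof is correct. Note that the survey states this proposition without proof (it is quoted from \cite{CCHW2024}). Its placement right after Theorem~\ref{thm:T^-T^+1}, and the form of the identity, point to the intended route. After your Step~1 identification $DT^+_t\phi(x)=L_v(x,\dot\xi_t(0))$, the second identity is read off from Arnaud's relation, item~(\ref{itm:Arnaud}): $\mathrm{graph}(DT^+_{t-s}\phi)=\Phi^{-(t-s)}_H(\mathrm{graph}(D^+\phi))=\Phi^{s}_H(\mathrm{graph}(DT^+_t\phi))$, so the Hamiltonian orbit of $(x,DT^+_t\phi(x))$ runs along the graphs of $DT^+_{t-s}\phi$.

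Your Step~2 replaces this with a direct dynamic-programming argument. The semigroup property of $T^+$ and additivity of $A$ along $\xi_t$ force $y=\mathbf{y}_x(t)$ to remain a maximizer for $T^+_{t-s}\phi$ at $\xi_t(s)$, and the first-order condition then identifies the covector.

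Your route is more elementary. It uses only differentiability of $T^+_{t-s}\phi$ along the curve, and in effect proves the needed inclusion of Arnaud's identity along a single orbit. It also gives the by-product that every point $\xi_t(s)$ has the same maximizer $y$. The Arnaud route is shorter once Theorem~\ref{thm:T^-T^+1} is available. It also yields the full graph identity, with both inclusions, which ties the construction to the geometric picture of the birth of singularities. Your Step~3 is the elementary comparison of two uniformly Lipschitz curves issuing from $x$, which is all the estimate asks for.

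Two minor points.

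First, the appeal to item (2) of Theorem~\ref{thm:T^-T^+1} at the start of Step~2 is never actually used. The inequality $T^+_t\phi(x)\geqslant T^+_{t-s}\phi(\xi_t(s))-A_s(x,\xi_t(s))$ comes from $T^+_t=T^+_s\circ T^+_{t-s}$.

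Second, the point you flag as the main obstacle is already settled. Once $y$ maximizes $\phi(\cdot)-A_{t-s}(\xi_t(s),\cdot)$, the localization of maximizers gives $d(\xi_t(s),y)\leqslant\lambda_0(t-s)<\lambda(t-s)$ directly. Handling it instead with the velocity bound of your Step~3 would be circular, because that bound is derived from the covector identity of Step~2. If you want a velocity bound there, use the standard a priori estimate for Tonelli minimizers whose endpoints satisfy $d(x,y)\leqslant\lambda_0 t$.
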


Suppose $\phi\in\text{\rm SCL}\,(M)$ and $H:T^*M\to\R$ is a Tonelli Hamiltonian. Invoking Proposition \ref{pro:ISC_t}, we observe that for given $t\in[0,\tau(\phi)]$,
\begin{equation}\label{eq:vf_W}
	W(s,x)=H_p(x,DT^+_{t-s}\phi(x)),\qquad (s,x)\in [0,t)\times M
\end{equation}
defines a time-dependent vector field on $[0,t)\times M$. Now, we try to extend the vector field $W$ to $[0,\infty)\times M$. 

Let $\Delta$ be a partition of $[0,\infty)$ with the partition points $=\{\tau_i\}$, an increasing sequence of positive real numbers, with $|\Delta|\leqslant\tau(\phi)$, where $|\Delta|=\max\{\tau_i-\tau_{i-1}\}$ is the width of the partition. For any $t\in[0,\infty)$, we let $\tau_{\Delta}(t)=\inf\{\tau_i|\,\tau_i>t\}$. Then one can define a vector field $W_{\Delta}(t,x)$ on $[0,\infty)\times M$ as
\begin{equation}\label{eq:vf_W_Delta}
	W_{\Delta}(t,x)=H_p(x,DT^+_{\tau_{\Delta}(t)-t}\phi(x)),\qquad t\in[0,\infty), x\in M.
\end{equation}
Notice that the vector fields $W_{\Delta}$ is uniformly bounded for any partition $\Delta$, and $W_{\Delta}$ is Lipschitz continuous on each $[\tau_{i-1},\tau_i-\varepsilon]\times M$ for any small $\varepsilon>0$. 

We consider the differential equation
\begin{equation}\label{eq:ODE_W_Delta}
	\begin{cases}
		\dot{\gamma}(t)=W_{\Delta}(t,\gamma(t)),\qquad t\in[0,\infty),\\
		\gamma(0)=x\in M.
	\end{cases}
\end{equation}
Equation \eqref{eq:ODE_W_Delta} admits a piecewise $C^1$ solution $\gamma:[0,\infty)\to M$, which is of $C^1$ class on each partition interval. Moreover, the solution of \eqref{eq:ODE_W_Delta} is unique on $[0,\infty)$, by Cauchy-Lipschitz theorem.

\begin{figure}
 \begin{center}
  \begin{tikzpicture}[scale=0.75]
   \clip (-6,-5) rectangle (6,5);
   \draw [red] [name path=curve 1] (-4,-4) ..  controls (-3.5,-2.5) .. (-2,-2);
      \draw (-4,-4) node[left] {\small{$x_0$}};
      \filldraw (-4,-4) circle (2pt);
      \draw (-1.9,-1.8) node[left] {\small{$x_1$}};
      \filldraw (-2,-2) circle (2pt);
      \draw (-3.5,-2.5) node[left][black]{\small{$\mathbf{y}_0$}};
      \draw [blue][name path= curve 11] (-4,-4) .. controls (-2.7,-3.5) .. (-2,-2);
      \draw (-2.7,-3.5) node[right][blue]{\small{$\xi_0$}};
      \fill [name intersections={of=curve 1 and curve 11, by={a,b}}] 
               (a) circle (2pt)
               (b) circle (2pt);
            \draw [blue][name path= curve 12] (-4,-4) .. controls (-3,-3.5) .. (-2.6,-2.2);
            \fill [name intersections={of=curve 1 and curve 12, by={a,c}}] 
               (a) circle (2pt)
               (c) circle (2pt);
            \draw [blue][name path= curve 13] (-4,-4) .. controls (-3.25,-3.5) .. (-3,-2.35);
            \fill [name intersections={of=curve 1 and curve 13, by={a,c}}] 
               (a) circle (2pt)
               (c) circle (2pt);
             \draw [red] [name path=curve 2] (-2,-2) ..  controls (-1.5,-.5) .. (0,0); 
              \draw (0.1,0.2) node[left] {$x_2$};
              \filldraw (0,0) circle (2pt);
            \draw (-1.5,-.5) node[left][black]{\small{$\mathbf{y}_1$}};
      \draw [blue][name path= curve 21] (-2,-2) .. controls (-.7,-1.5) .. (0,0);
      \draw (-.7,-1.5) node[right][blue]{\small{$\xi_1$}};
      \fill [name intersections={of=curve 2 and curve 21, by={a,b}}] 
               (a) circle (2pt)
               (b) circle (2pt);
            \draw [blue][name path= curve 22] (-2,-2) .. controls (-1,-1.5) .. (-.6,-.2);
            \fill [name intersections={of=curve 2 and curve 22, by={a,c}}] 
               (a) circle (2pt)
               (c) circle (2pt);
            \draw [blue][name path= curve 23] (-2,-2) .. controls (-1.25,-1.5) .. (-1,-.35);
            \fill [name intersections={of=curve 2 and curve 23, by={a,c}}] 
               (a) circle (2pt)
               (c) circle (2pt);
            \draw [dashed] (0,0) .. controls (0.5,1.5) ..  (2,2);
            \draw (2.1,2.2) node[left] {\small{$x_{k}$}};
            \filldraw (2,2) circle (2pt);
            \draw (0.5,0.3) node[above]{\begin{sideways}$\ddots$\end{sideways}};
            \draw (1.2,1) node[above]{\begin{sideways}$\ddots$\end{sideways}};
            \draw [red] [name path=curve 3] (2,2) ..  controls (2.5,3.5) .. (4,4);
            \draw (4.1,4.2) node[left] {\small{$x_{k+1}$}};
              \filldraw (4,4) circle (2pt);
            \draw (2.5,3.5) node[left][black]{\small{$\mathbf{y}_k$}};
      \draw [blue][name path= curve 31] (2,2) .. controls (3.3,2.5) .. (4,4);
      \draw (3.3,2.5) node[right][blue]{\small{$\xi_k$}};
      \fill [name intersections={of=curve 3 and curve 31, by={a,b}}] 
               (a) circle (2pt)
               (b) circle (2pt);
            \draw [blue][name path= curve 32] (2,2) .. controls (3,2.5) .. (3.4,3.8);
            \fill [name intersections={of=curve 3 and curve 32, by={a,c}}] 
               (a) circle (2pt)
               (c) circle (2pt);
            \draw [blue][name path= curve 33] (2,2) .. controls (2.75,2.5) .. (3,3.65);
            \fill [name intersections={of=curve 3 and curve 33, by={a,c}}] 
               (a) circle (2pt)
               (c) circle (2pt);
  \end{tikzpicture}
 \end{center}
 \caption{minimizing movement from $x_0$.}
\end{figure}

\subsection{Weak convergence of the minimizing movement}
\label{subsec:weak_conv}

Now, fix $T>0$ and pick up any sequence of partitions $\{\Delta_m\}$ such that $\lim_{m\to\infty}|\Delta_m|=0$, and for each $m\in\N$, denote by $\mathbf{x}_m$ the solution of the equation
\begin{equation}\label{eq:ODE}
	\begin{cases}
		\dot{\mathbf{x}}_m(t)=H_p(\mathbf{x}_m(t),\mathbf{p}_m(t)),\qquad t\in[0,T],\\
		\mathbf{x}_m(0)=x,
	\end{cases}
\end{equation}
where $\mathbf{p}_m(t):=DT^+_{\tau_{i_{\Delta_m}(t)}-t}\phi(\mathbf{x}_m(t))$. Since the family of solutions $\{\mathbf{x}_m\}$ is equi-Lipschitz and bounded on $[0,T]$, by taking a subsequence if necessary, a standard application of Ascoli-Arzela theorem gives us a limiting Lipschitz curve $\mathbf{x}:[0,T]\to M$. It is natural to ask if the Lipschitz curve $\mathbf{x}_m$, as $m\to\infty$, tends to a solution of \eqref{eq:GC}.

To answer this question, we will use the method of weak convergence and \emph{Young measures} (see \cite{Yu2006} for another approach using weak convergence idea for singular characteristics). Let us first introduce the basic facts from Young measure named after L. C. Young (see \cite{Young_book}). The following result is standard.

\begin{Pro}[\cite{Evans_book1990}]\label{pro:Young_measure}
Let $m,n\in\N$, $U\subset\R^n$ be a bounded open subset and $\{f_k\}_{k\in\N}$ be a sequence in $L^{\infty}(U,\R^m)$. Then, there exists a subsequence $\{f_{k_j}\}$ and for almost all $x\in U$ a Borel probability measure $\nu_x$ on $\R^m$ such that for each $F\in C(U\times\R^m)$ we have
\begin{align*}
	F(\cdot,f_{k_j})\ \mathrel{\ensurestackMath{\stackon[1pt]{\rightharpoonup}{\scriptstyle\ast}}}\ \overline{F}\quad \text{in}\ L^{\infty}(U),
\end{align*}
where
\begin{align*}
	\overline{F}(x)=:\int_{\R^m}F(x,y) d\nu_x(y),\quad a.e., x\in U.
\end{align*}
\end{Pro}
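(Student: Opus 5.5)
The plan is to prove this fundamental theorem on Young measures by the classical duality argument: view each $f_k$ as a measurable map into a compact set and embed it into a space of measure-valued maps, where weak-$\ast$ compactness is available. Since $\{f_k\}$ is bounded in $L^\infty(U,\R^m)$, I fix $R>0$ with $|f_k(x)|\leqslant R$ for a.e.\ $x$ and all $k$, and set $K=\overline{B(0,R)}\subset\R^m$. To each $f_k$ I associate the parametrized measure $\nu^k_x:=\delta_{f_k(x)}$, and more concretely the linear functional
\begin{align*}
	\Lambda_k(\Psi)=\int_U\Psi(x,f_k(x))\,dx,\qquad \Psi\in L^1(U;C(K)).
\end{align*}
The relevant duality is $L^1(U;C(K))^*=L^\infty_w(U;\mathcal{M}(K))$, the weak-$\ast$ measurable maps $x\mapsto\nu_x$ into Radon measures on $K$. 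Since $C(K)$ is separable, $L^1(U;C(K))$ is separable. Moreover $|\Lambda_k(\Psi)|\leqslant\|\Psi\|_{L^1(U;C(K))}$, so the $\Lambda_k$ are uniformly bounded, and by sequential Banach--Alaoglu there is a subsequence $\Lambda_{k_j}\rightharpoonup^\ast\Lambda$.

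The first step is to represent the limit $\Lambda$ by a family $\{\nu_x\}$. If one wants to avoid citing the vector-valued duality theorem, this can be done by hand. Choose a countable dense set $\{g_\ell\}\subset C(K)$. By a diagonal argument, extract a subsequence with $g_\ell(f_{k_j})\rightharpoonup^\ast\overline{g_\ell}$ in $L^\infty(U)$ for every $\ell$. The limits $\overline{g_\ell}$ depend linearly on $g_\ell$ up to null sets (after restricting to the rational span), and satisfy $|\overline{g_\ell}(x)|\leqslant\|g_\ell\|_\infty$ a.e. They are also positive when $g_\ell\geqslant0$, and $\overline{1}=1$. Discarding a countable union of null sets, for a.e.\ $x$ the map $g_\ell\mapsto\overline{g_\ell}(x)$ extends to a positive, normalized, bounded linear functional on $C(K)$. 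The Riesz representation theorem then yields a Borel probability measure $\nu_x$ supported in $K$, hence a probability measure on $\R^m$, with $\overline{g}(x)=\int g\,d\nu_x$. Measurability of $x\mapsto\int g\,d\nu_x$ follows by density from that of the $\overline{g_\ell}$.

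The next step is to pass from functions $g(y)$ to general $F\in C(U\times\R^m)$. Only the values of $F$ on $U\times K$ matter, since both $f_k$ and $\nu_x$ live in $K$. For $\varphi\in L^1(U)$ I must show
\begin{align*}
	\int_U\varphi(x)F(x,f_{k_j}(x))\,dx\to\int_U\varphi(x)\int F(x,y)\,d\nu_x(y)\,dx.
\end{align*}
I would first check this for finite sums $\sum_i a_i(x)g_i(y)$ with $a_i\in C_c(U)$. That case follows from the previous step, because $\varphi a_i\in L^1$. For general $F$, and for $\varphi$ with compact support in some compact $U'\subset U$, I use that $F$ is uniformly continuous on $U'\times K$. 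Stone--Weierstrass then approximates $F$ uniformly there by such sums. The error is controlled by $\|\varphi\|_{L^1}$ times the uniform error, uniformly in $j$, since all the measures involved are probabilities. Finally I remove the compact-support restriction on $\varphi$ by density in $L^1(U)$.

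The main obstacle is the last reduction: $F$ need not be bounded on $U\times K$, because $U$ is open and $F$ is only continuous on it. Consequently $F(\cdot,f_k)$ need not lie in $L^\infty(U)$ unless $F$ is bounded on $U\times K$, for instance when $F$ extends continuously to $\overline U\times\R^m$. I would therefore state the result under this standard boundedness convention, which is implicitly assumed in \cite{Evans_book1990}. Under it, the density argument for $\varphi\in L^1(U)$ goes through with uniform bounds $\sup_{U\times K}|F|$, and this is enough for the application to $F(t,p)=H_p(\mathbf{x}(t),p)$ on $[0,T]$.
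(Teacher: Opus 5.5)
Your argument is correct for the theorem as it should be stated, but it takes a different route from the one behind the citation. The paper gives no proof of its own; it simply refers to \cite{Evans_book1990}. There, each $f_k$ is lifted to the Radon measure $\mu_k$ on $U\times\R^m$ defined by $\int\psi\,d\mu_k=\int_U\psi(x,f_k(x))\,dx$, and a weakly-$\ast$ convergent subsequence $\mu_{k_j}\rightharpoonup\mu$ is extracted. One then checks that the $x$-marginal of $\mu$ is Lebesgue measure on $U$; this is where the uniform $L^\infty$ bound keeps mass from escaping to $|y|\to\infty$. Finally, the slicing (disintegration) theorem produces the whole family at once, $d\mu(x,y)=d\nu_x(y)\,dx$. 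That route gives existence, measurability and normalization of $x\mapsto\nu_x$ simultaneously. Since the test functions $\psi$ already depend on $(x,y)$, the $x$-dependent statement for $\varphi\in C_c(U)$ needs no approximation step; the price is the disintegration theorem. Your route is more elementary: a diagonal extraction over a countable dense subset of $C(K)$, using only separability of $L^1(U)$, followed by a pointwise Riesz representation on the fixed compact $K$ off a countable union of null sets. Tightness is automatic there because everything lives in $K$. The cost is the extra Stone--Weierstrass approximation on $U'\times K$ needed to recover $x$-dependence. Both routes end with the same density argument in $\varphi\in L^1(U)$, which is exactly where $\sup_{U\times K}|F|<\infty$ is needed.

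Two remarks on hypotheses. First, you are right that $F\in C(U\times\R^m)$ must be bounded on $U\times K$: for $n=1$, $U=(0,1)$ and $F(x,y)=1/x$, one has $F(\cdot,f_k)\notin L^\infty(U)$. But this is not a convention implicit in \cite{Evans_book1990}, where the theorem is stated only for $F\in C(\R^m)$ and the problem cannot occur. The $x$-dependence is the paper's own extension, and your added hypothesis corrects it. Second, instead of writing ``since $\{f_k\}$ is bounded'', you should say explicitly that the printed statement omits this hypothesis and that it is necessary. For $f_k\equiv k e_1$ and $F(x,y)=\arctan|y|$, one has $F(\cdot,f_k)\to\pi/2$ uniformly. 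On the other hand, $\int\arctan|y|\,d\nu<\pi/2$ for every Borel probability measure $\nu$ on $\R^m$, so no Young measure can represent the limit.
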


\begin{Rem}
If $\{K(x)\}_{x\in U}$ is a measurable family of convex and compact sets such that
\begin{equation}\label{eq:upper_semicontinuity}
	\lim_{k\to\infty}d_{K(x)}(f_k(x))=0,\quad a.e., x\in U,
\end{equation}
then $\text{supp}\, \nu_x\subset K(x)$ for almost all $x\in U$. Indeed, choosing $F(x,y)=d_{K(x)}(y)$ in Proposition \ref{pro:Young_measure}, we obtain that for any $\Psi\in L^1(U)$,
\begin{align*}
	0=\lim_{k\to\infty}\int_U\langle d_{K(x)}(f_k(x)),\Psi(x)\rangle\ dx=\int_U\Psi(x)\left(\int_{\R^m}d_{K(x)}(y)\ d\nu_x(y)\right)\ dx.
\end{align*}
It follows that $\text{supp}\, \nu_x\subset K(x)$ for almost all $x\in U$.
\end{Rem}

\begin{Lem}\label{lem:weak}
Let $\mathbf{x}_{m}$ be the solution for \eqref{eq:ODE} converging to $\mathbf{x}$ uniformly on $[0,T]$ as $m\to\infty$, then
\begin{align*}
	\lim_{m\to\infty}d_{D^+u(\mathbf{x}(t))}(\mathbf{p}_{m}(t))=0,\qquad t\in[0,T].
\end{align*}
\end{Lem}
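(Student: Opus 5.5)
Here $u$ is read as $\phi$. The plan is to show that every cluster point of $\{\mathbf{p}_m(t)\}$ lies in $D^+\phi(\mathbf{x}(t))$. Fix $t\in[0,T]$ and write $s_m:=\tau_{\Delta_m}(t)-t\in(0,|\Delta_m|]$, so that $s_m\to0$ and $\mathbf{p}_m(t)=DT^+_{s_m}\phi(\mathbf{x}_m(t))$.

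The first step is a uniform bound on $\mathbf{p}_m(t)$. Since $T^+_s\phi$ is Lipschitz with constant controlled by $\mathrm{Lip}\,(\phi)$ (and $H$), uniformly for $s\in[0,\tau(\phi)]$, the sequence is bounded. By compactness of $M$ we may pass to a subsequence along which $(\mathbf{x}_m(t),\mathbf{p}_m(t))\to(\mathbf{x}(t),\bar p)$. If every such limit $\bar p$ lies in the compact convex set $D^+\phi(\mathbf{x}(t))$, the whole sequence of distances tends to $0$, by a standard subsequence argument.

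The second step identifies $\bar p$ using item (\ref{itm:Arnaud}) of Theorem \ref{thm:T^-T^+1}. That result gives $(\mathbf{x}_m(t),\mathbf{p}_m(t))=\Phi_H^{-s_m}(z_m,q_m)$ for some $(z_m,q_m)\in\mathrm{graph}\,(D^+\phi)$, that is, $(z_m,q_m)=\Phi_H^{s_m}(\mathbf{x}_m(t),\mathbf{p}_m(t))$. Concretely, $z_m=\mathbf{y}_{\mathbf{x}_m(t)}(s_m)$ is the maximizer in \eqref{eq:ISC}, and $q_m\in D^+\phi(z_m)$. The points $(\mathbf{x}_m(t),\mathbf{p}_m(t))$ stay in a compact set and $s_m\to0$, so continuity of the Hamiltonian flow gives $|z_m-\mathbf{x}_m(t)|+|q_m-\mathbf{p}_m(t)|\to0$. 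Alternatively, one can invoke Proposition \ref{pro:ISC_t}, which gives $d(z_m,\mathbf{x}_m(t))\leqslant Cs_m$. Either way, $z_m\to\mathbf{x}(t)$ and $q_m\to\bar p$.

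The third step concludes with the upper semicontinuity (closed graph) of $x\mapsto D^+\phi(x)$ for semiconcave $\phi$. Since $q_m\in D^+\phi(z_m)$, $z_m\to\mathbf{x}(t)$ and $q_m\to\bar p$, we get $\bar p\in D^+\phi(\mathbf{x}(t))$. This is the standard closedness of the superdifferential graph: pass to the limit in the semiconcavity inequality $\phi(y)\leqslant\phi(z_m)+\langle q_m,y-z_m\rangle+\frac C2|y-z_m|^2$ in local charts.

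The main technical point is the second step, namely ensuring that the Arnaud correspondence applies with $s_m\in(0,t_0]$ uniformly and that the backward-flow preimage is controlled uniformly in $m$. The partitions satisfy $|\Delta_m|\leqslant\tau(\phi)$, and $T^+_{\tau(\phi)}\phi\in C^{1,1}$, so Theorem \ref{thm:T^-T^+1} applies with $t_0=\tau(\phi)$. The case $s_m=0$ cannot occur, by the definition of $\tau_\Delta$ as the first partition point strictly greater than $t$.
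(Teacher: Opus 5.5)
Your argument is correct. At bottom it rests on the same fact as the paper's one-line proof, namely upper semicontinuity of superdifferentials, but you reach that fact by a different route.

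The paper appeals directly to upper semicontinuity of the time-dependent map $(t,x)\rightrightarrows D^+u(t,x)$, $u(t,x)=T^+_t\phi(x)$, at $t=0$. This is not the textbook closed-graph property of a single semiconcave function. For $t>0$ the functions $T^+_t\phi$ are not a priori uniformly semiconcave; the construction only gives them semiconvexity with constant of order $1/t$. Uniform convergence $T^+_t\phi\to\phi$ alone does not force limits of $DT^+_{t}\phi(x_t)$ into $D^+\phi(x)$ without some uniform-in-$t$ one-sided control, and the paper leaves that justification implicit.

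You bypass this with Arnaud's identity, item (\ref{itm:Arnaud}) of Theorem \ref{thm:T^-T^+1}. It lets you flow $(\mathbf{x}_m(t),\mathbf{p}_m(t))$ for the short time $s_m$ onto $\mathrm{graph}\,(D^+\phi)$. Since the points stay in a compact set, this moves them by $O(s_m)$. After that you only need the closed graph of $D^+\phi$ for the fixed function $\phi$. The paper's version is shorter; yours is self-contained given Theorem \ref{thm:T^-T^+1} and makes the displacement explicit.

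Two small remarks. First, the \emph{alternative} via Proposition \ref{pro:ISC_t} (or the bound $d(\mathbf{y}_x(s),x)\leqslant\lambda_0 s$) only controls $z_m$, not $q_m$. It is the continuity of $\Phi_H$ that you actually need for the momentum. Second, the paper only asserts $T^+_t\phi\in C^{1,1}$ for $t\in(0,\tau(\phi))$, not at $t=\tau(\phi)$. This is harmless since $s_m\to0$.
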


\begin{proof}
This is direct consequence of the upper semi-continuity of the set-valued map $(t,x)\rightrightarrows D^+u(t,x)$ with $u(t,x)=T^+_t\phi(x)$.
\end{proof}

\begin{The}\label{thm:Young}
Suppose $\{\mathbf{x}_m\}$ is a sequence of solution of \eqref{eq:ODE} and $\mathbf{x}_m$ converges to $\mathbf{x}$ uniformly on $[0,T]$ as $m\to\infty$.
\begin{enumerate}
	\item There exists a family of Borel probability measure $\{\nu_t\}_{t\in[0,T]}$ such that $\mbox{\rm supp}\,\nu_t\subset D^+\phi(\mathbf{x}(t))$ and there exists a subsequence $\mathbf{p}_{m_k}$ for almost all $t\in[0,T]$ such that
	\begin{equation}\label{eq:weak_convergence}
	H_p(\mathbf{x}_{m_k},\mathbf{p}_{m_k})\mathrel{\ensurestackMath{\stackon[1pt]{\rightharpoonup}{\scriptstyle\ast}}}\ \overline{H}_p\quad \text{in}\ L^{\infty}([0,T])
	\end{equation}
	as $m\to\infty$, where $\overline{H}_p(t)=\int_{\R^n}H_p(\mathbf{x}(t),p)\ d\nu_t(p)$ for $t\in[0,T]$.
	\item The curve $\mathbf{x}$ satisfies equation
	\begin{equation}\label{eq:gc2}
	    \dot{\mathbf{x}}(t)=\overline{H}_p(t),\quad a.e.\ t\in[0,T].
	\end{equation}
	\item $\mathbf{x}$ satisfies the differential inclusion \eqref{eq:GC} and it is a generalized characteristic.
\end{enumerate}
\end{The}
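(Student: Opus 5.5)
The proof applies Proposition \ref{pro:Young_measure} to the bounded sequence $f_m(t):=\mathbf{p}_m(t)$ on $U=(0,T)\subset\R$, working in a fixed coordinate chart; the general case follows by splitting $[0,T]$ into finitely many subintervals on which $\mathbf{x}$ and the $\mathbf{x}_m$ stay in one chart. The sequence $\{\mathbf{p}_m\}$ is uniformly bounded in $L^\infty$: each $T^+_s\phi$, $s\in(0,\tau(\phi)]$, is Lipschitz with a constant controlled by $\Lip\phi$ (semiconvexity of $T^+_s\phi$ plus the bound $\lambda_0$ on maximizers). Proposition \ref{pro:Young_measure} then gives a subsequence $m_k$ and probability measures $\nu_t$ such that $F(t,\mathbf{p}_{m_k}(t))\rightharpoonup^*\int F(t,p)\,d\nu_t(p)$ for every continuous $F$.

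\textbf{Support and item (1).} By Lemma \ref{lem:weak}, $d_{D^+\phi(\mathbf{x}(t))}(\mathbf{p}_m(t))\to0$ for every $t$, since $\tau_{\Delta_m}(t)-t\le|\Delta_m|\to0$ and $\mathbf{x}_m(t)\to\mathbf{x}(t)$. The set $K(t)=D^+\phi(\mathbf{x}(t))$ is compact and convex, and $t\mapsto K(t)$ is measurable because it is upper semicontinuous. The Remark after Proposition \ref{pro:Young_measure} therefore gives $\supp\nu_t\subset D^+\phi(\mathbf{x}(t))$ for a.e.\ $t$.

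To obtain \eqref{eq:weak_convergence}, write
\[
H_p(\mathbf{x}_{m_k},\mathbf{p}_{m_k})=H_p(\mathbf{x},\mathbf{p}_{m_k})+\bigl[H_p(\mathbf{x}_{m_k},\mathbf{p}_{m_k})-H_p(\mathbf{x},\mathbf{p}_{m_k})\bigr].
\]
The bracket tends to $0$ uniformly, because $H_p$ is uniformly continuous on compact sets, the $\mathbf{p}_m$ are uniformly bounded, and $\mathbf{x}_m\to\mathbf{x}$ uniformly. For the first term, apply the Young measure convergence to $F(t,p)=H_p(\mathbf{x}(t),p)$, which is continuous in $(t,p)$ since $\mathbf{x}$ is continuous. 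This yields $\overline{H}_p$.

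\textbf{Item (2).} Since $\mathbf{x}_{m_k}(t)=x+\int_0^t H_p(\mathbf{x}_{m_k},\mathbf{p}_{m_k})\,ds$, testing the weak-$*$ convergence against $\chi_{[0,t]}\in L^1$ and using uniform convergence on the left gives
\[
\mathbf{x}(t)=x+\int_0^t\overline{H}_p(s)\,ds.
\]
Differentiating, $\dot{\mathbf{x}}=\overline{H}_p$ a.e.

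\textbf{Item (3).} For a.e.\ $t$, $\nu_t$ is a probability measure supported in the compact convex set $D^+\phi(\mathbf{x}(t))$. Hence $\overline{H}_p(t)$ is the barycenter of the pushforward of $\nu_t$ under the continuous map $p\mapsto H_p(\mathbf{x}(t),p)$. That pushforward is supported in the compact set $H_p(\mathbf{x}(t),D^+\phi(\mathbf{x}(t)))$, and the barycenter of a probability measure on a compact set lies in the closed convex hull of that set. In finite dimensions the convex hull of a compact set is already closed, so $\dot{\mathbf{x}}(t)\in\operatorname{co}H_p(\mathbf{x}(t),D^+\phi(\mathbf{x}(t)))$, which is \eqref{eq:GC}.

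The main obstacle is the support step. The Remark requires the pointwise convergence $d_{K(t)}(\mathbf{p}_m(t))\to0$, which is where Lemma \ref{lem:weak} enters. Behind that lemma is the upper semicontinuity of $(s,y)\rightrightarrows D^+T^+_s\phi(y)$ as $s\to0^+$, with $T^+_0\phi=\phi$; this uses the semiconcavity of $\phi$ together with the convergence $T^+_s\phi\to\phi$. The remaining ingredients are routine bookkeeping: the measurability of $t\mapsto K(t)$, needed so that $d_{K(t)}(p)$ is an admissible Carathéodory test function, and the chart localization.
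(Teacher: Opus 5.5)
Your proposal is correct and follows the paper's own route: the Young measure of $\{\mathbf{p}_m\}$ from Proposition~\ref{pro:Young_measure}, support in $D^+\phi(\mathbf{x}(t))$ via Lemma~\ref{lem:weak} and the Remark, then (2) and (3) as consequences; it also fills in steps the paper leaves implicit, namely replacing $\mathbf{x}_{m_k}$ by $\mathbf{x}$ inside $H_p$ before using the Young measure, testing against $\chi_{[0,t]}$ for (2), the barycenter argument for (3), and the measurability of $t\mapsto d_{D^+\phi(\mathbf{x}(t))}(p)$ needed to use it as a test integrand. The one loose point is your side remark on why Lemma~\ref{lem:weak} holds, since semiconcavity of $\phi$ plus uniform convergence $T^+_s\phi\to\phi$ does not by itself give upper semicontinuity of superdifferentials (you need something like Theorem~\ref{thm:T^-T^+1}(3), which moves $(y,DT^+_s\phi(y))$ to a point of $\text{\rm graph}\,(D^+\phi)$ at distance $O(s)$), but because you only invoke the lemma, your proof is unaffected.
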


\begin{proof}
Since $\mathbf{x}_m(\cdot)$ satisfies differential equation \eqref{eq:ODE}, without loss of generality we also suppose that $\mathbf{x}_m$ converges to $\mathbf{x}$ uniformly on $[0,T]$ as $m\to\infty$. Thus, by Proposition \ref{pro:Young_measure}, there exists a subsequence $\{\mathbf{p}_{m_k}\}$ and Young measure $\{\nu_t\}$, for almost all $t\in[0,T]$ such that
\begin{align*}
	H_p(\mathbf{x}_{m_k},\mathbf{p}_{m_k})\mathrel{\ensurestackMath{\stackon[1pt]{\rightharpoonup}{\scriptstyle\ast}}}\ \overline{H}_p\quad \text{in}\ L^{\infty}([0,T]).
\end{align*}
Moreover, we have $\lim_{k\to\infty}d_{D^+(\mathbf{x}(t))}(\mathbf{p}_{m_k}(t)))=0$ by Lemma \ref{lem:weak}. Then condition \eqref{eq:upper_semicontinuity} implies $\text{supp}\,\nu_t\subset D^+\phi(\mathbf{x}(t))$ for almost all $t\in[0,T]$. This completes the proof of (1). 

The proofs of (2) and (3) are direct consequences of (1), since $\{\mathbf{x}_m\}$ converges to $\mathbf{x}$ uniformly on $[0,T]$, and $\nu_t$ is a family of Borel probability measure. 
\end{proof}

Invoking our previous construction, for any partition $\Delta$ of $[0,T]$, we define a curve $\mathbf{y}_\Delta:[0,T]\to M$ inductively by
\begin{align*}
	\mathbf{y}_\Delta(t):=\arg\max\{\phi(z)-A_{t-\tau_{i_{\Delta}(t)-1}}(\mathbf{y}(\tau_{i_{\Delta}(t)-1}),z): z\in M\}
\end{align*}
and $\mathbf{y}_k:=\mathbf{y}_{\Delta_k}$, $k\in\N$, with $|\Delta_k|<\tau(\phi)$. Directly from Proposition 3.4 in \cite{Cannarsa_Cheng3} and Proposition \ref{pro:ISC_t}, the sequence $\{\mathbf{y}_k\}$ is equi-Lipschitz and there exists $C>0$ such that for each $k\in\N$,
\begin{align*}
	\|\mathbf{y}_k-\mathbf{x}_k\|_{\infty}\leqslant C|\Delta_k|.
\end{align*}

\begin{Cor}
Any limiting curve $\mathbf{x}$ in Theorem \ref{thm:Young} is Lipschitz and it a generalized characteristic from $x$.
\end{Cor}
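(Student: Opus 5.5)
The plan is to read the corollary as a combination of Theorem~\ref{thm:Young} with the estimate $\|\mathbf{y}_k-\mathbf{x}_k\|_\infty\leqslant C|\Delta_k|$ stated just above. The point is that the intrinsic Euler-segment curves $\mathbf{y}_k$ and the ODE solutions $\mathbf{x}_k$ of \eqref{eq:ODE} have the same limit points. Hence any limit of either family is a generalized characteristic from $x$.

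\emph{Lipschitz regularity.} First I fix the limiting curve $\mathbf{x}$ as the uniform limit of a subsequence $\mathbf{x}_{m}$ of solutions of \eqref{eq:ODE}. Each $\mathbf{x}_m$ satisfies $|\dot{\mathbf{x}}_m|\leqslant \sup\{|H_p(y,p)|: y\in M,\ |p|\leqslant \Lip\phi+C'\}$. This bound is uniform in $m$ because $|DT^+_s\phi|$ is bounded uniformly for $s\in(0,\tau(\phi)]$: indeed $T^+_s\phi$ is Lipschitz with a constant depending only on $\Lip\phi$ and $H$, by the localization $y\in B(x,\lambda_0 s)$ of the maximizers. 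So the family is equi-Lipschitz with a common constant $K$. Passing to the uniform limit in $d(\mathbf{x}_m(t),\mathbf{x}_m(s))\leqslant K|t-s|$ shows that $\mathbf{x}$ is $K$-Lipschitz, and clearly $\mathbf{x}(0)=x$.

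\emph{Generalized characteristic.} Next I invoke Theorem~\ref{thm:Young}(3) directly; this gives \eqref{eq:GC} on $[0,T]$. I would nonetheless spell out the key step behind it. By (1) and (2) of that theorem, $\dot{\mathbf{x}}(t)=\int H_p(\mathbf{x}(t),p)\,d\nu_t(p)$ with $\supp\nu_t\subset D^+\phi(\mathbf{x}(t))$. Since $D^+\phi(\mathbf{x}(t))$ is compact and $H_p(\mathbf{x}(t),\cdot)$ is continuous, the barycenter of the pushforward measure $H_p(\mathbf{x}(t),\cdot)_\#\nu_t$ lies in the closed convex hull $\operatorname{co} H_p(\mathbf{x}(t),D^+\phi(\mathbf{x}(t)))$, which is compact. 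To extend to $[0,\infty)$, I take a diagonal subsequence over $T=1,2,\dots$.

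\emph{Main obstacle.} The step needing the most care is Lemma~\ref{lem:weak}, namely $d_{D^+\phi(\mathbf{x}(t))}(\mathbf{p}_m(t))\to 0$. The difficulty is that $\mathbf{p}_m(t)=DT^+_{s_m}\phi(\mathbf{x}_m(t))$ with $s_m=\tau_{i}-t\in(0,|\Delta_m|]$, which tends to $0$, while $\mathbf{x}_m(t)\to\mathbf{x}(t)$. I would handle this with Theorem~\ref{thm:T^-T^+1}(\ref{itm:Arnaud}), which gives $(\mathbf{x}_m(t),\mathbf{p}_m(t))=\Phi_H^{-s_m}(z_m,q_m)$ with $q_m\in D^+\phi(z_m)$. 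All momenta involved are bounded, so $d(z_m,\mathbf{x}_m(t))+|q_m-\mathbf{p}_m(t)|=O(s_m)$. The upper semicontinuity of $z\rightrightarrows D^+\phi(z)$ for semiconcave $\phi$ then yields the claim, working in local coordinates.

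Finally, for limits of the curves $\mathbf{y}_k$ rather than $\mathbf{x}_k$, I use $\|\mathbf{y}_k-\mathbf{x}_k\|_\infty\leqslant C|\Delta_k|\to0$. This shows the two families have the same uniform limits along common subsequences, so the conclusion transfers.
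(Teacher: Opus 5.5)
Your proposal is correct and is essentially the argument the paper leaves implicit: the uniform bound on $W_{\Delta}$ makes the curves equi-Lipschitz, so the limit is Lipschitz; Theorem~\ref{thm:Young}(3) gives the inclusion \eqref{eq:GC}; and the estimate $\|\mathbf{y}_k-\mathbf{x}_k\|_\infty\leqslant C|\Delta_k|$ transfers the conclusion to limits of the intrinsic Euler-segment curves. Your proof of Lemma~\ref{lem:weak} via Arnaud's theorem (flowing $(\mathbf{x}_m(t),\mathbf{p}_m(t))$ forward for time $s_m\to0$ onto a point of $\mathrm{graph}\,(D^+\phi)$, then using upper semicontinuity of $D^+\phi$) is a useful sharpening of the paper's one-line appeal to upper semicontinuity of $(t,x)\rightrightarrows D^+T^+_t\phi(x)$, which is not automatic at $t=0$.
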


\subsection{Generalized characteristics and minimizing movements}
\label{subsec:MM}

One can simplify our previous construction by using a uniform partition. Let $x\in M$ and $\tau\in(0,\tau(\phi))$. We define inductively a sequence $\{z_k\}$ by $z_0=x$ and
\begin{align*}
	z_{k+1}=\arg\max\{\phi(y)-A_\tau(x,y), y\in M\},\quad k\in\N.
\end{align*}
The sequence $\{z_k\}$ is well defined by the discussion in the last subsection. Now, for any $\tau\in(0,\tau_3(\phi)$ we define a curve $\mathbf{z}^{\tau}:[0,+\infty)\to M$ by
\begin{align*}
	\mathbf{z}^\tau(t)=z_{\lfloor\frac t{\tau}\rfloor},\quad s\in[0,+\infty).
\end{align*}
It is clear that each $\mathbf{z}^{\tau}$ is a piecewise constant curve. As an analogy with classic notion of minimizing movements, a \emph{minimizing movements of $(\phi,H)$ from $x$ in positive direction} is a limiting curve $\mathbf{z}$ of the family $\mathbf{z}^{\tau}$ by taking subsequence under the topology of uniform convergence on compact subsets as $\tau\downarrow0^+$.

Now, we shall compare two limiting curves, $\mathbf{x}$ obtained in the previous section and the maximizing movements $\mathbf{z}$. Let $\Delta$ be the partition of $[0,+\infty)$ with partition points $k\tau$, $k=0,1,\ldots$. Define a vector field $W_{\Delta}=W_\tau$ on $[0,\tau)\times M$ by
\begin{align*}
	W_{\tau}(t,x)=H_p(x,DT^+_{\tau-t}\phi(x)),\qquad t\in[0,\tau), x\in M,
\end{align*}
and extend $W_\tau$ to $[0,+\infty)\times M$ periodically, i.e., 
\begin{align*}
	W_\tau(t,x)=W_\tau(\langle t/\tau\rangle,\nabla T^+_{\tau-\langle t/\tau\rangle\tau}\phi(x))
\end{align*}
for $t\geqslant 0$ and $x\in M$. We denote by $\mathbf{x}^{\tau}_+$ the solution of the differential equation
\begin{equation}
	\begin{cases}
		\dot{\mathbf{x}}^\tau_+(t)=W_\tau(t,\mathbf{x}^\tau_+(t)),\quad t\geqslant0,\\
		\mathbf{x}^\tau_+(0)=x.
	\end{cases}
\end{equation}
Notice that $\mathbf{z}^\tau(t)=\mathbf{x}^\tau_+(t)$ at any partition point $t=k\tau$ with $k=0,1,\ldots$, and for any $t\in[0,+\infty)$
\begin{equation}\label{eq:x_z}
	\begin{split}
		|\mathbf{z}^\tau(t)-\mathbf{x}^\tau_+(t)|\leqslant&\,|\mathbf{z}^\tau(t)-\mathbf{z}^\tau(\lfloor t/\tau\rfloor\tau)|+|\mathbf{x}^\tau_+(\lfloor t/\tau\rfloor\tau)-\mathbf{x}^\tau(t)|\\
	\leqslant&\,\int^{\langle t/\tau\rangle \tau}_0|H_p(\mathbf{x}^\tau_+(t),\nabla T^+_{\tau-t}\phi(\mathbf{x}^{\tau}_+(t)))|\ dt\leqslant C\tau.
	\end{split}
\end{equation}

\begin{Pro}
The limiting curve $\mathbf{x}_+$ is a minimizing movements of $(\phi,H)$ from $x$ in positive direction. Each minimizing movements of $(\phi,H)$ from $x$ in positive direction is a generalized characteristic from $x$.
\end{Pro}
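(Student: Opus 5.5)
The argument rests on the estimate \eqref{eq:x_z}, which gives $\|\mathbf{z}^\tau-\mathbf{x}^\tau_+\|_\infty\leqslant C\tau$ uniformly in $t\geqslant0$ with $C$ independent of $\tau$. The proposition contains two claims, and both follow by transferring limits along this estimate. Here $\mathbf{x}_+$ denotes any locally uniform limit of $\mathbf{x}^{\tau_j}_+$ along some sequence $\tau_j\downarrow0$.

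\emph{First claim.} Fix a sequence $\tau_j\downarrow0$ with $\mathbf{x}^{\tau_j}_+\to\mathbf{x}_+$ uniformly on compact sets. Such a sequence exists by Ascoli--Arzel\`a, because the vector fields $W_\tau$ are uniformly bounded, so the family is equi-Lipschitz. Then \eqref{eq:x_z} gives, on every $[0,T]$,
\[
\sup_{[0,T]} d(\mathbf{z}^{\tau_j},\mathbf{x}_+)\leqslant C\tau_j+\sup_{[0,T]}d(\mathbf{x}^{\tau_j}_+,\mathbf{x}_+)\to0 .
\]
Hence $\mathbf{z}^{\tau_j}\to\mathbf{x}_+$ locally uniformly. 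By definition this means $\mathbf{x}_+$ is a minimizing movement of $(\phi,H)$ from $x$ in positive direction.

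\emph{Second claim.} Let $\mathbf{z}$ be a minimizing movement, i.e.\ $\mathbf{z}^{\tau_j}\to\mathbf{z}$ locally uniformly for some $\tau_j\downarrow0$. By \eqref{eq:x_z}, $\mathbf{x}^{\tau_j}_+\to\mathbf{z}$ locally uniformly as well. In particular $\mathbf{z}$ is Lipschitz, even though each $\mathbf{z}^\tau$ is only piecewise constant.

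Now fix $T>0$ and let $\Delta_j$ be the uniform partition of $[0,T]$ with step $\tau_j$. Then $|\Delta_j|=\tau_j\to0$, and $\mathbf{x}^{\tau_j}_+$ restricted to $[0,T]$ is exactly the solution $\mathbf{x}_j$ of \eqref{eq:ODE} for the partition $\Delta_j$, with $\mathbf{p}_j(t)=DT^+_{\tau_{i_{\Delta_j}(t)}-t}\phi(\mathbf{x}_j(t))$. So Theorem~\ref{thm:Young} applies directly, using Lemma~\ref{lem:weak} and the Young measure compactness of Proposition~\ref{pro:Young_measure}. It shows that $\mathbf{z}|_{[0,T]}$ satisfies $\dot{\mathbf{z}}(t)\in\mathrm{co}\,H_p(\mathbf{z}(t),D^+\phi(\mathbf{z}(t)))$ for a.e.\ $t\in[0,T]$.

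Since $T$ is arbitrary, we must make the Young-measure subsequences compatible across $T$. Take $T=N\in\N$ and pass to a diagonal subsequence. The limit curve does not change, and the inclusion holds a.e.\ on $[0,\infty)$. Therefore $\mathbf{z}$ is a generalized characteristic from $x$.

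\emph{Main obstacle.} The delicate point is checking that Theorem~\ref{thm:Young}(3) really gives the inclusion in the convex hull. By (1)--(2) we have $\dot{\mathbf{z}}(t)=\int H_p(\mathbf{z}(t),p)\,d\nu_t(p)$ with $\mathrm{supp}\,\nu_t\subset D^+\phi(\mathbf{z}(t))$. Since $D^+\phi(\mathbf{z}(t))$ is compact and $H_p(\mathbf{z}(t),\cdot)$ is continuous, the image $H_p(\mathbf{z}(t),D^+\phi(\mathbf{z}(t)))$ is compact. Its closed convex hull therefore equals $\mathrm{co}$ of it, which is the set required in \eqref{eq:GC}, and the barycenter of the push-forward measure lies in that hull.

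The limit also has to be identified correctly. The weak-$*$ limit of $\dot{\mathbf{x}}_j$ is $\dot{\mathbf{z}}$, because $\mathbf{x}_j\to\mathbf{z}$ uniformly. The weak-$*$ limit of $H_p(\mathbf{x}_j,\mathbf{p}_j)$ may be computed with $\mathbf{x}_j$ replaced by $\mathbf{z}$: the momenta $\mathbf{p}_j$ are uniformly bounded by the Lipschitz bounds on $T^+_s\phi$, and $H_p$ is uniformly continuous on compact sets, so the error from this replacement tends to zero uniformly.
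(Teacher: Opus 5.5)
Your proof is correct and takes essentially the same route as the paper. You transfer locally uniform limits between $\mathbf{z}^\tau$ and $\mathbf{x}^\tau_+$ through the $C\tau$ estimate \eqref{eq:x_z}, then apply Theorem~\ref{thm:Young} to the uniform partitions, and you usefully spell out details the paper leaves implicit, namely the identification of the weak-$*$ limit and the barycenter argument for the convex hull. The diagonal extraction across $T$ is harmless but not needed: on each $[0,N]$ the inclusion concerns the fixed limit curve $\mathbf{z}$, so the countably many a.e.\ statements combine directly.
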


\begin{proof}
Let $\tau\to0^+$ in \eqref{eq:x_z} and we have $\mathbf{x}_+$ coincides with $\mathbf{z}$. The second statement in the theorem is a consequence of Proposition \ref{thm:Young}.
\end{proof}

We can also introduce the minimizing movements of $(\phi,H)$ from $x$ in negative direction similarly. Consider the minimizing problem: let $z^-_0=x$ and recursively define $z^-_{k+1}$ as a minimizer of the problem
\begin{align*}
	\inf\{\phi(y)+A_{\tau}(y,z^-_k)\},\quad k\in\N.
\end{align*}
For any $\tau\in(0,\tau(\phi)$, we define a pieacewise constant curve $\mathbf{x}_-^\tau:(-\infty,0]\to M$ by
\begin{align*}
	\mathbf{x}^{\tau}_-(t)=z^-_{\lfloor -t/\tau\rfloor},\quad t\leqslant0.
\end{align*}
We call any limiting curve $\mathbf{x}_-$ of $\{\mathbf{x}^{\tau}_-\}$, under the topology of convergence on compact subsets, a \emph{minimizing movement of $(\phi,H)$ from $x$ in negative direction}. By a similar argument as above, one can show that the limiting curve satisfies the differential inclusion \eqref{eq:GC} on $(-\infty,0]$. 

However, there is a essential difference between the minimizing movement of $(\phi,H)$ from $x$ in different diection, if $\phi$ is weak KAM solution of \eqref{eq:HJ_wk}. One can tell such difference under the following reasoning:
\begin{enumerate}
	\item For $z^-_0=x$, the function $\phi(\cdot)+A_{\tau}(\cdot,x)$ is semiconcave and its minimizer can be achieved at $z^-_1$. Note that such minimizer is not unique in general and there is a one-to-one correspondence between such minimizers and the element of $D^*\phi(x)$. 
	\item The function $\phi(\cdot)+A_{\tau}(\cdot,x)$ is differentiable at $z^-_1$ by the semiconcavity. Thus, $\phi$ is also differentiable at $z^-_1$ since the $A_\tau(\cdot,x)$ is of class $C^{1,1}_{\rm loc}$ when $t\in(0,\tau(\phi))$. By Fermat's rule,
	\begin{align*}
		D\phi(z^-_1)=-D_xA_\tau(x,z^-_1)=L_v(\xi^1(-\tau),\dot{\xi}^1(-\tau))
	\end{align*}
	where $\xi_1$ is the unique minimal curve for $A_\tau(x,z^-_1)$ defined on $[-\tau,0]$, since  the $A_\tau(\cdot,x)$ is of class $C^{1,1}_{\rm loc}$ again. We also have
	\begin{align*}
		L_v(\xi_1(0),\dot{\xi}_1(0))\in\nabla^*\phi(x).
	\end{align*}
	\item Since $\phi$ is differentiable at $z^-_1$, in the next step, there exists a unique minimizer $z^-_2$ of the function $\phi(\cdot)+A_{\tau}(\cdot,z^-_1)$. Let $\xi_2$ be the unique minimizer for $A_\tau(z^-_2,z^-_1)$ defined on $[-2\tau,-\tau]$. Then
	\begin{align*}
		D\phi(z^-_2)=&\,L_v(\xi_2(-2\tau),\dot{\xi}_2(-2\tau)),\\
		D\phi(z^-_1)=&\,L_v(\xi_2(-\tau),\dot{\xi}_2(-\tau))=L_v(\xi_1(-\tau),\dot{\xi}_1(-\tau)).
	\end{align*}
	Define $\gamma$ a curve defined on $[-2\tau,0]$ by
	\begin{align*}
		\gamma(t)=
		\begin{cases}
			\xi_2(t)& t\in[-2\tau,-\tau],\\
			\xi_1(t)& t\in[-\tau,0].
		\end{cases}
	\end{align*}
	It follows $\gamma$ is the minimal curve for $A_{2\tau}(z^-_2,x)$ such that $z^-_2$ is a minimizer of the problem
	\begin{align*}
		\inf\{\phi(y)+A_{2\tau}(y,x)\}.
	\end{align*}
	\item Inductively, there exists a $C^1$ curve $\xi:(-\infty,0]\to M$ such that
	\begin{align*}
		T^-_t\phi(x)=\phi(\gamma(t))+A_{-t}(\gamma(t),x),\quad\forall t<0,
	\end{align*}
	and $\gamma(-k\tau)=z^-_k$. Moreover, such $\gamma$ does not depends on the time step $\tau$, and $\gamma$ is exactly a minimizing movement of $(\phi,H)$ from $x$ by the same reasoning for the maximizing movement above. Moreover, each minimizing movement $\mathbf{x}_-$ of $(\phi,H)$ from $x$ satisfies the classical characteristic equation
	\begin{equation}\label{eq:classic_char}
		\dot{\mathbf{x}}_-(t)=H_p(\mathbf{x}_-(t),D\phi(\mathbf{x}_-(t))),\quad t\in(-\infty,0].
	\end{equation}
\end{enumerate}

In summary, we have

\begin{Pro}\label{pro:backward}
Suppose $\phi$ is a weak KAM solution of \eqref{eq:HJ_wk}. Then any minimizing movement of $(\phi,H)$ from $x$ in negative direction satisfies classical characteristic equation \eqref{eq:classic_char}. Moreover, there is a one-to-one correspondence of minimizing movements of $(\phi,H)$ from $x$ in negative direction and $D^*\phi(x)$.
\end{Pro}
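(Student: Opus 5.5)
The plan is to make the four-step discussion preceding the statement rigorous. The argument has two parts: first analyse a single backward step, then glue the steps together using the semigroup property of $T^-_t$ and the fact that $T^-_t\phi=\phi$ (we take $c[H]=0$).

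\emph{One backward step.} Fix $x$ and $\tau\in(0,\tau(\phi))$. Because $\phi$ is a weak KAM solution, $\phi(x)=T^-_\tau\phi(x)=\min_y\{\phi(y)+A_\tau(y,x)\}$, so every minimizer $z$ of the first step is the initial point of a $(\phi,H)$-calibrated minimal curve $\xi$ on $[-\tau,0]$ ending at $x$.

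\begin{enumerate}
	\item \emph{Differentiability at $z$.} The function $\phi$ is semiconcave, and $-A_\tau(\cdot,x)$ is semiconcave (indeed $C^{1,1}$ near the minimizer, for short time, by the regularity recalled in Section~\ref{sec:generalized}). The standard squeeze argument at a minimum of the sum $\phi+A_\tau(\cdot,x)$ then gives that $\phi$ is differentiable at $z$, with
	\[
	D\phi(z)=-D_yA_\tau(z,x)=L_v(\xi(-\tau),\dot\xi(-\tau)).
	\]
	\item \emph{The endpoint $x$.} Calibration yields $L_v(\xi(0),\dot\xi(0))\in D^+\phi(x)$. Moreover, since $\xi(-\tau)$ lies in $\operatorname{Diff}(\phi)$, in fact $\xi(s)$ lies in it for $s\in[-\tau,0)$. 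This is standard weak KAM theory: interior points of calibrated curves are differentiability points, and $\xi$ is a classical characteristic, $\dot\xi=H_p(\xi,D\phi(\xi))$ on $[-\tau,0)$.
	\item \emph{Reachable gradients.} Letting $s\to0^-$ shows $p_x:=L_v(x,\dot\xi(0))\in D^*\phi(x)$.
\end{enumerate}

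\emph{Iteration and independence of $\tau$.} At $z^-_1$ the superdifferential is a singleton, so the next minimizer $z^-_2$ is unique: a minimizer corresponds to a backward calibrated curve from $z^-_1$, and there is only one since $D^+\phi(z^-_1)$ is a singleton. Concatenating $\xi_2$ and $\xi_1$ gives a curve that is $C^1$ at $-\tau$, since the momenta $D\phi(z^-_1)$ match. Equivalently, by $\phi=T^-_\tau T^-_\tau\phi$ the concatenation is calibrated on $[-2\tau,0]$.

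By induction we obtain a calibrated curve $\gamma_{p}:(-\infty,0]\to M$. It is the projection of the backward Hamiltonian orbit $\pi\Phi_H^{t}(x,p_x)$, $t\le0$, with $\gamma_p(-k\tau)=z^-_k$. This curve depends only on the initial covector $p_x$, not on $\tau$. Hence $\mathbf{x}^\tau_-$ converges uniformly on compacts to $\gamma_p$ along any sequence $\tau\to0^+$ with fixed first-step choice, and the convergence error is $O(\tau)$ by Lipschitz bounds. Thus every minimizing movement is some $\gamma_p$, and it satisfies \eqref{eq:classic_char} on $(-\infty,0)$. At $t=0$ the equation is read with the one-sided derivative, or a.e.

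\emph{The correspondence with $D^*\phi(x)$.} This is the step I expect to be the main obstacle, because the first minimizer $z^-_1$ depends on $\tau$, so the bijection must be set up at the level of limits.

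\begin{enumerate}
	\item \emph{Injectivity.} Map a minimizing movement to $p=L_v(x,\dot{\mathbf{x}}_-(0^-))$, which lies in $D^*\phi(x)$ by the step above. Distinct $p$ give distinct Hamiltonian orbits, so the map is injective.
	\item \emph{Surjectivity.} Given $p\in D^*\phi(x)$, the standard weak KAM fact that reachable gradients generate backward calibrated curves provides a calibrated $\gamma$ on $(-\infty,0]$ with $L_v(x,\dot\gamma(0))=p$. Then $\gamma(-\tau)$ is a minimizer of $\phi+A_\tau(\cdot,x)$ for every $\tau$. Choosing $z^-_1=\gamma(-\tau)$ at each step, uniqueness at the later steps forces $\mathbf{x}^\tau_-=\gamma$ at the nodes, so $\gamma$ is a minimizing movement.
\end{enumerate}

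The delicate point is a subtlety in the correspondence $D^+\phi(x)\leftrightarrow$ minimizers noted in item (1) of the discussion. Backward calibrated curves correspond to $D^*\phi(x)$ rather than to all of $D^+\phi(x)$. So the bijection must use the fact that every calibrated curve's terminal momentum is a reachable gradient, and conversely that every reachable gradient arises from some calibrated curve. I would verify this by approximating $p$ by $D\phi(x_n)$ with $x_n\to x$ differentiability points, and using stability of calibrated curves under uniform limits.
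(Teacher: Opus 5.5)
Your proposal is correct and follows the paper's four-step argument: differentiability of $\phi$ at $z^-_1$ from the minimum of $\phi+A_\tau(\cdot,x)$ with $A_\tau(\cdot,x)$ locally $C^{1,1}$, uniqueness of every later minimizer, and concatenation into one $\tau$-independent calibrated curve; unlike the paper, which only asserts the correspondence with $D^*\phi(x)$, you actually justify it via reachable gradients. The one step to tighten is ``every minimizing movement is some $\gamma_p$'': since the first minimizer may correspond to a different $p_\tau\in D^*\phi(x)$ for each $\tau$, you need compactness of $D^*\phi(x)$ and continuity of $p\mapsto\pi\Phi^t_H(x,p)$ to identify any limit of $\mathbf{x}^{\tau_n}_-$ with some $\gamma_p$, $p\in D^*\phi(x)$ (also, the squeeze needs differentiability or semiconcavity of $A_\tau(\cdot,x)$, not semiconcavity of $-A_\tau(\cdot,x)$; your $C^{1,1}$ remark already covers this).
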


Finally, we turn to the discussion of the relation between the generalized characteristics and the theory of \emph{homogenization}. For any $\tau\in(0,\tau_3(\phi)$, let $W:[0,\delta)\times M\to\R^n$ be a vector field defined by $W(t,x)=H_p(x,\nabla T^+_{\tau-t}\phi(x))$ and extend $W$ periodically such that $W(t+\tau,x)=W(t,x)$ for all $t\geqslant0$ and $x\in M$. We consider the following scaled system
\begin{align*}
	\begin{cases}
		\dot{\mathbf{x}}_k=W(kt,\mathbf{x}_k),\qquad t\geqslant0,\\
		\mathbf{x}_k(0)=x,
	\end{cases}
\end{align*}
and try to understand the limiting behavior of the solutions $\mathbf{x}_k$. Because of the uniform boundedness of the vector fields for all $k\in\N$, we can suppose $\mathbf{x}_k$ converges to a Lipschitz curve $\mathbf{x}$ uniformly on compact subsets as $m\to\infty$. This is a standard problem of homogenization of ordinary differential equations.

Since our construction of the vector field $W$ and the discussion above, the limiting curve $\mathbf{x}$ is a minimizing movement of $(\phi,H)$ from $x$ in positive direction and it is a generalized characteristic from $x$.

\subsection{Propagation of cut points along generalized characteristics}
\label{subsec:prop_cut}

We need a local $C^{1,1}$ property for the complement of cut locus, which is the key observation. For any weak KAM solution $\phi$ of \eqref{eq:HJ_wk} and $t>0$, let
\begin{align*}
	E(\phi,t):=\{x\in M: \tau_\phi(x)\geqslant t\},
\end{align*}
where $\tau_\phi$ is the cut time function for $\phi$. Notice that $\phi$ is differentiable at each point in $E(\phi,t)$.

\begin{Lem}[\cite{CCHW2024}]\label{lem:C11}
Suppose $H:T^*M\to\R$ is a Tonelli Hamiltonian. Then there exists positive constants $\lambda=
	\lambda_H,T=T_H,C=C_H$ such that for any weak KAM solution $\phi$ of \eqref{eq:HJ_wk}, $t\in(0,T]$ and $x\in E(\phi,t)$
	\begin{align*}
		|p-D\phi(x)|\leqslant\frac Ct|y-x|,\qquad\forall y\in B(x,\lambda t), p\in D^+\phi(y).
	\end{align*}
\end{Lem}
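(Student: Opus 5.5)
The idea is to compare the superdifferential of $\phi$ near $x$ with the gradient of a smooth local subsolution-type barrier furnished by a backward calibrated curve. Since $x\in E(\phi,t)$, there is a $(\phi,H)$-calibrated curve $\gamma\in C^1([0,t],M)$ with $\gamma(0)=x$. By the definition of $\tau_\phi$ in \eqref{eq:cut_time} the curve points from the future, so I will orient it so that $\phi(x)=\phi(\gamma(t))+A_t(\gamma(t),x)$; the other sign convention only reverses time. Set $z=\gamma(t)$ and $\psi(y):=\phi(z)+A_t(z,y)$.

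By weak KAM, $\phi=T^-_t\phi\le\psi$ everywhere, with equality at $y=x$. Moreover $D\phi(x)=D_yA_t(z,x)=L_v(x,\dot\gamma(0))$. Calibrated curves have speed bounded in terms of $H$ only, since $\phi$ is uniformly Lipschitz for weak KAM solutions of a fixed $H$. Hence $d(z,x)\le\lambda_0 t$ with $\lambda_0$ depending only on $H$.

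I then invoke the short-time regularity of the fundamental solution already used in constructing \eqref{eq:ISC}. There is $T_H>0$ such that for $t\in(0,T_H]$ the map $y\mapsto A_t(z,y)$ is $C^2$ on $B(z,\lambda' t)$ and satisfies $\|D^2_yA_t(z,\cdot)\|\le C_2/t$ there. Choosing $\lambda=\lambda'-\lambda_0>0$ gives $B(x,\lambda t)\subset B(z,\lambda' t)$. Thus $\psi$ is a $C^{1,1}$ upper support of $\phi$ touching at $x$, with $D^2\psi\le (C_2/t)I$ on $B(x,\lambda t)$.

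The key step is to convert touching from above into a bound on all of $D^+\phi(y)$ for $y$ near $x$. Because $\phi$ is semiconcave, any $p\in D^+\phi(y)$ satisfies
\[
\phi(w)\le\phi(y)+\langle p,w-y\rangle+\tfrac{C_1}{2}|w-y|^2 .
\]
On the other side, $\phi$ lies below $\psi$ and coincides with it at $x$. It would be natural to combine the two inequalities at a point $w$ chosen near $y$ in the direction of $p-D\phi(x)$, but $C_1$ is not uniform in $t$, and a short-scale semiconcavity estimate is needed instead.

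I would try to use $\psi$ at both ends. Let $\eta=p-D\psi(y)$ and test at $w=y+s\eta/|\eta|$. This gives $\phi(w)\le\psi(w)\le\psi(y)+\langle D\psi(y),w-y\rangle+\frac{C_2}{2t}s^2$. Comparing with a lower bound for $\phi(w)$ requires a lower barrier, which I obtain by semiconcavity of $\phi$ at scale $t$ with constant $C/t$ (via $\phi=T^-_t\phi$). This should yield that $\phi-\psi$ has a local maximum $0$ at $x$ and that $\phi-\psi$ is $(C/t)$-semiconcave on the ball.

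For a $K$-semiconcave function $f\le0$ with $f(x)=0$ on a ball, the elementary estimate $|q|\le K|y-x|+\ldots$ for $q\in D^+f(y)$ follows from the one-dimensional inequality along the segment from $y$ toward $x$ and beyond. This is the main obstacle: extracting a linear bound rather than a square-root one. I expect it needs the two-sided control $-\frac{C}{t}\le D^2\psi\le\frac Ct$ together with $\phi\le\psi$.

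Finally, $D\psi(y)-D\phi(x)$ is bounded by $(C_2/t)|y-x|$, so the triangle inequality concludes. All constants depend only on $H$, as required.
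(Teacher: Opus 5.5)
Your proposal has a genuine gap: it uses the wrong calibrated curve, so the hypothesis $x\in E(\phi,t)$ never actually enters. In \eqref{eq:cut_time} the calibrated curve $\gamma\in C^1([0,t],M)$ \emph{starts} at $x$ and runs forward, so $\phi(\gamma(t))=\phi(x)+A_t(x,\gamma(t))$. The identity you use instead, $\phi(x)=\phi(z)+A_t(z,x)$, is just $\phi=T^-_t\phi$, and it holds at \emph{every} point of $M$, singular points included. ``Reversing time'' is therefore not harmless: the forward/backward asymmetry is exactly what separates cut points from the rest.

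As a result your $\psi$ touches $\phi$ from above, which every semiconcave function admits at every point. Neither this upper barrier nor semiconcavity of $\phi$ (another upper-type bound) can give a lower bound on $\phi(w)$, so the step ``lower barrier via semiconcavity at scale $t$'' fails. The elementary estimate you then hope for is false: $f(y)=-|y-x|$ is concave with $f\le 0=f(x)$, yet $|q|=1$ for every $q\in D^+f(y)$, $y\neq x$. More to the point, $\phi=\psi-|\cdot-x|$ satisfies everything you have: $\phi\le\psi$ with equality at $x$, two-sided Hessian bounds on $\psi$, and semiconcavity of $\phi$. It still violates the conclusion, and this is exactly the picture at a singular point. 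Your formula $D\phi(x)=D_yA_t(z,x)$ also presupposes differentiability at $x$, which an upper barrier cannot provide.

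What is needed is a lower barrier from the forward curve. Set $z=\gamma(t)$. Domination gives $\phi(z)\le\phi(y)+A_t(y,z)$ for all $y$, so $\psi^-(y):=\phi(z)-A_t(y,z)$ satisfies $\psi^-\le\phi$, with equality at $y=x$.

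By the same short-time regularity you invoke, now applied to $A_t(\cdot,z)$ with $d(x,z)\le\lambda_0t$, the function $\psi^-$ is $C^2$ on $B(x,2\lambda t)$ with $\|D^2\psi^-\|\le C_2/t$. Then $f:=\phi-\psi^-$ is $(K/t)$-semiconcave there, with $K$ depending only on $H$; this uses the uniform semiconcavity constant of weak KAM solutions and $t\le T$.

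Since $f\ge 0=f(x)$, $f$ is differentiable at $x$ with $Df(x)=0$, hence $f(y)\le\frac{K}{2t}|y-x|^2$. For $p\in D^+\phi(y)$, put $q:=p-D\psi^-(y)\in D^+f(y)$ and $w:=y-|y-x|\,q/|q|$. Then
\[
0\le f(w)\le f(y)-|y-x|\,|q|+\frac{K}{2t}|y-x|^2\le \frac{K}{t}|y-x|^2-|y-x|\,|q|,
\]
so $|q|\le\frac Kt|y-x|$. The linear bound comes directly from a \emph{minimum} of a semiconcave function, with no square-root loss. Adding $|D\psi^-(y)-D\psi^-(x)|\le\frac{C_2}{t}|y-x|$ and using $D\psi^-(x)=D\phi(x)$ concludes. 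The rest of your skeleton (speed bound, short-time estimates on $A_t$, final triangle inequality) goes through unchanged once the barrier is flipped.
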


Using Lemma \ref{lem:C11} we can prove the following global propagation result.

\begin{The}[\cite{CCHW2024}]\label{thm:propagation_cut}
Suppose $H:T^*M\to\R$ is a Tonelli Hamiltonian, and $\phi$ is a weak KAM solution of \eqref{eq:HJ_wk}. Then the following hold
\begin{enumerate}
	\item For any $x\in M\setminus\text{\rm Cut}\,(\phi)$, let $\gamma_x:(-\infty,\tau_\phi(x)]\to M$ be the unique $(\phi,H)$-calibrated curve with $\gamma_x(0)=x$. Then $\gamma_x$ is the unique solution of the differential inclusion
	\begin{equation}\label{eq:GCI}\tag{GC}
		\begin{cases}
			\dot{\gamma}(t)\in\text{\rm co}\,H_p(\gamma(t),D^+\phi(\gamma(t))),\qquad a.e.\  t\in\R,\\
			\gamma(0)=x
		\end{cases}
	\end{equation}
	on $(-\infty,\tau_\phi(x)]$.
	\item If $\gamma:[0,+\infty)\to M$ satisfies \eqref{eq:GCI} and $\gamma(0)\in \text{\rm Cut}\,(\phi)$, then $\gamma(t)\in\text{\rm Cut}\,(\phi)$ for all $t\geqslant0$.
\end{enumerate}
\end{The}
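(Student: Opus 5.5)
For part (1), I will first check that $\gamma_x$ solves \eqref{eq:GCI} on $(-\infty,\tau_\phi(x)]$, and then prove uniqueness.

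\emph{$\gamma_x$ is a solution.} For $x\notin\operatorname{Cut}(\phi)$, the calibrated curve $\gamma_x$ is $C^1$. For every $t<\tau_\phi(x)$, the point $\gamma_x(t)$ lies in $E(\phi,\tau_\phi(x)-t)$, so $\phi$ is differentiable there. Calibration then gives $\dot\gamma_x(t)=H_p(\gamma_x(t),D\phi(\gamma_x(t)))$, which is in particular a solution of \eqref{eq:GCI}.

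\emph{Uniqueness.} Let $\gamma$ be any solution of \eqref{eq:GCI} with $\gamma(0)=x$, and fix a compact interval $[a,b]\subset(-\infty,\tau_\phi(x))$. On it $\tau_\phi(\gamma_x(s))\geqslant t_*:=\min\{\tau_\phi(x)-b,T_H\}>0$.

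1. Lemma \ref{lem:C11} applied at the points $\gamma_x(s)$ shows the following. As long as $|\gamma(s)-\gamma_x(s)|<\lambda t_*$, every $p\in D^+\phi(\gamma(s))$ satisfies
\[
|p-D\phi(\gamma_x(s))|\leqslant \frac{C}{t_*}|\gamma(s)-\gamma_x(s)|.
\]
2. Since $H_p$ is locally Lipschitz and the convex hull preserves the bound, for a.e. $s$ we get
\[
|\dot\gamma(s)-\dot\gamma_x(s)|\leqslant K|\gamma(s)-\gamma_x(s)|,
\]
with $K$ depending only on $t_*$ and $H$.
3. Gronwall's inequality, started from $\gamma(0)=\gamma_x(0)$, then gives $\gamma=\gamma_x$ on the maximal subinterval of $[a,b]$ containing $0$ where the smallness condition holds.
4. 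A continuity (open--closed) argument extends the equality to all of $[a,b]$.
5. Letting $[a,b]$ exhaust $(-\infty,\tau_\phi(x))$ and using continuity at the endpoint gives $\gamma=\gamma_x$ on $(-\infty,\tau_\phi(x)]$.

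This works both forward and backward in time, since the estimate is symmetric.

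For part (2), I argue by contradiction. Suppose $\gamma(0)\in\operatorname{Cut}(\phi)$ but $\gamma(t_1)\notin\operatorname{Cut}(\phi)$ for some $t_1>0$. Set $y=\gamma(t_1)$. The shifted curve $\tilde\gamma(s)=\gamma(s+t_1)$ solves \eqref{eq:GCI} with $\tilde\gamma(0)=y$, on the interval $[-t_1,\infty)$. Uniqueness from part (1), applied on the backward portion $[-t_1,0]\subset(-\infty,\tau_\phi(y)]$, forces $\tilde\gamma=\gamma_y$ there. Hence $\gamma(0)=\gamma_y(-t_1)$. But points lying on the interior of the calibrated curve through $y$ have cut time at least $t_1+\tau_\phi(y)>0$, so $\gamma(0)\notin\operatorname{Cut}(\phi)$, a contradiction.

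\textbf{Main obstacle.} The delicate step is that uniqueness in part (1) must hold for solutions of the \emph{convexified} inclusion defined only on a subinterval, run backward from $y$. The argument relies on the one-sided Lipschitz-type estimate of Lemma \ref{lem:C11} being available uniformly along $\gamma_y$. This requires the cut time to be bounded below by a positive constant on a compact piece, namely $\tau_\phi(\gamma_y(s))\geqslant\tau_\phi(y)-s$, and it requires the a priori closeness needed to enter the ball $B(\cdot,\lambda t)$. That closeness is supplied by the continuity bootstrap in step 4.
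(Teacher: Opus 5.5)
Your proposal is correct and takes essentially the route the paper indicates, where the result is said to follow from Lemma~\ref{lem:C11}. The argument has two steps:
\begin{enumerate}
\item The local $C^{1,1}$ estimate centred at the points $\gamma_x(s)$ gives a two-sided Lipschitz bound on the convexified velocity set, so Gronwall yields forward and backward uniqueness of (GC) through non-cut points.
\item Part (2) then follows by running this backward uniqueness from a hypothetical non-cut point $\gamma(t_1)$, which places $\gamma(0)$ in the interior of a calibrated curve and gives the contradiction.
\end{enumerate}
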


\section{Generalized Hamiltonian gradient flow}
\label{sec:HGF}

\subsection{An EDI formalism of GHGF}

The notion of maximal slope curve can be understood as the steepest descent curve of a function, which plays an important role in the theory of gradient flows in metric space. Our treatment of maximal slope curves has the same spirit as the classical one. The readers can refer to the monograph \cite{Ambrosio_GigliNicola_Savare_book2008,Ambrosio_Brue_Semola_book2021} and the references therein for more details. 

Let $\phi$ be a semiconcave function on $M$, let $H$ be a Tonelli Hamiltonian and let $\mathbf{p}(x)$ be a Borel measurable selection of the superdifferential $D^+\phi(x)$. Let $\gamma:[0,t]\to M$ be an absolutely continuous curve. Then, for almost all $s\in[0,t]$ we have
\begin{align*}
	\min_{p\in D^+\phi(\gamma(s))}\langle p,\dot{\gamma}(s)\rangle&=\frac {d^+}{ds}\phi(\gamma(s))=\frac {d^-}{ds}\phi(\gamma(s))\\
		&=-\min_{p\in D^+\phi(\gamma(s))}\langle p,-\dot{\gamma}(s)\rangle=\max_{p\in D^+\phi(\gamma(s))}\langle p,\dot{\gamma}(s)\rangle.
\end{align*}
It follows that
\begin{align*}
	\frac {d}{ds}\phi(\gamma(s))=\langle p,\dot{\gamma}(s)\rangle,\qquad a.e. s\in[0,t],\ \forall p\in D^+\phi(\gamma(s)).
\end{align*}
Invoking Fenchel-Young inequality, we have that
\begin{align*}
	\phi(\gamma(t))-\phi(\gamma(0))&=\int_{0}^{t}\frac{d}{ds}\phi(\gamma(s))\ ds=\int_{0}^{t}\langle\mathbf{p}(\gamma(s)),\dot{\gamma}(s)\rangle\ ds\\
		&\leqslant \int_{0}^{t}\Big\{L(\gamma(s),\dot{\gamma}(s))+H(\gamma(s),\mathbf{p}(\gamma(s)))\Big\}\ ds,
\end{align*}
where equality holds if and only if
\begin{align*}
	\dot{\gamma}(s)=H_{p}(\gamma(s),\mathbf{p}(\gamma(s))),\quad a.e.\ s\in[0,t].
\end{align*}

Now, we introduce the notion of maximal slope curve and strict singular characteristic for a pair $(\phi,H)$. This approach is closely connected to the so-called EDI formalism in the theory of gradient flow.

\begin{defn}\label{defn:msc}
Let $\phi$ be a semiconcave function on $M$, let $H$ be a Tonelli Hamiltonian, and let $\mathbf{p}$ be a Borel measurable selection of the superdifferential $D^+\phi$.
\begin{enumerate}
	\item We call a locally absolutely continuous curve $\gamma:I\to M$ a \emph{maximal slope curve} for the pair $(\phi,H)$ and the selection $\mathbf{p}$, where $I$ is any interval which can be the whole real line, if $\gamma$ satisfies
	\begin{equation}\label{eq:msc_H_phi2}\tag{VI}
			\phi(\gamma(t_2))-\phi(\gamma(t_1))=\int_{t_1}^{t_2}\Big\{L(\gamma(s),\dot{\gamma}(s))+H(\gamma(s),\mathbf{p}(\gamma(s)))\Big\} ds
	\end{equation}
	for all $t_1,t_2\in I,\ t_1<t_2$, or, equivalently,
	\begin{equation}
		\dot{\gamma}(t)=H_{p}(\gamma(t),\mathbf{p}(\gamma(t))),\quad a.e.\ t\in I.
	\end{equation}
	\item For the \emph{minimal energy selection} $\mathbf{p}^{\#}_{\phi,H}$ define by
	\begin{equation}\label{eq:P^}
		\mathbf{p}^{\#}_{\phi,H}(x)=\arg\min\{H(x,p): p\in D^{+}\phi(x)\},\quad x\in M.
	\end{equation}
	we call any associated maximal slope curve $\gamma:I\to M$ for $(\phi,H)$ a \emph{strict singular characteristic} for the pair $(\phi,H)$. We use the term \emph{singular} in the definition since it is essentially connected to the phenomenon of propagation of singularities of $\phi$ in forward direction when the initial point is a singular point of $\phi$.
\end{enumerate}
\end{defn}

\begin{Rem}
If $\gamma:I\to M$ is a maximal slope curve for $(\phi,H)$ and $\mathbf{p}$, we have
\begin{align*}
	&\phi(\gamma(t_2))-\phi(\gamma(t_1))=\,\int^{t_2}_{t_1}\Big\{L(\gamma(s),\dot{\gamma}(s))+H(\gamma(s),\mathbf{p}(\gamma(s))\Big\}\ ds\\
	\geqslant&\,\int^{t_2}_{t_1}\Big\{L(\gamma(s),\dot{\gamma}(s))+H(\gamma(s),\mathbf{p}^\#_{\phi,H}(\gamma(s))\Big\}\ ds,\quad \forall t_1,t_2\in I,\ t_1<t_2.
\end{align*}
Since the converse inequality is true for any selection of $D^+\phi$, it follows that $\gamma$ is a strict singular characteristic for the pair $(\phi,H)$. This implies that \emph{the only meaningful maximal slope curve is exactly the strict singular characteristic determined by the minimal energy selection $\mathbf{p}^\#_{\phi,H}(x)$}. It is clear that the minimal energy selection $\mathbf{p}^\#_{\phi,H}(x)$ plays a crucial role in this theory, and the notions of maximal slope curve and strict singular characteristic coincide.
\end{Rem}

\subsection{Well-posedness of strict singular characteristics}
\label{subsec:wellposeness}

Thanks for the variational construction for strict singular characteristics, one has the following stability result.

\begin{The}[Stability of strict singular characteristics, \cite{CCHW2024}]\label{thm:stability}
	Let $\{H_k\}$ be a sequence of Tonelli Hamiltonians, $\{\phi_k\}$ be a sequence of $K$-semiconcave functions on $M$, and $\gamma_k:\R\to M$, $k\in\N$ be a sequence of strict singular characteristics for the pair $(\phi_k,H_k)$. We suppose the following condition:
	\begin{enumerate}
		\item $\phi_k$ converges to $\phi$ uniformly on $M$,
		\item $H_k$ converges to a Tonelli Hamiltonian $H$ uniformly on compact subset,
		\item $\gamma_k$ converges to $\gamma:\R\to M$ uniformly on compact subset.
	\end{enumerate}
	Then $\gamma$ is a strict singular characteristic for the pair $(\phi,H)$.
	
	Moreover, there exists a subsequence of strict singular characteristics $\{\gamma_{k_i}\}$ such that
	\begin{equation}\label{eq:P-converge}
		\lim_{i\to\infty}\mathbf{p}^\#_{\phi_{k_i},H_{k_i}}(\gamma_{k_i}(t))=\mathbf{p}^\#_{\phi,H}(\gamma(t)),\qquad a.e.\ t\in \R.
	\end{equation}
\end{The}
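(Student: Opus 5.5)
The plan is to pass to the limit in the energy identity \eqref{eq:msc_H_phi2}, using lower semicontinuity for one inequality and the general Fenchel--Young bound for the other. First, since the $\phi_k$ are uniformly $K$-semiconcave and uniformly convergent, they are equi-Lipschitz. By the superlinearity of $H_k$ (uniform on compacta thanks to (2) and to $H$ being Tonelli) the velocities $\dot\gamma_k=\partial_pH_k(\gamma_k,\mathbf{p}^\#_{\phi_k,H_k}(\gamma_k))$ are uniformly bounded. Hence $\gamma$ is Lipschitz and, after extraction, $\dot\gamma_k\rightharpoonup^*\dot\gamma$ in $L^\infty_{\rm loc}$. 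Fix $t_1<t_2$. For each $k$,
\begin{align*}
\phi_k(\gamma_k(t_2))-\phi_k(\gamma_k(t_1))=\int_{t_1}^{t_2}\Big\{L_k(\gamma_k,\dot\gamma_k)+H_k\big(\gamma_k,\mathbf{p}^\#_{\phi_k,H_k}(\gamma_k)\big)\Big\}ds .
\end{align*}
The left-hand side converges to $\phi(\gamma(t_2))-\phi(\gamma(t_1))$.

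Second, I would bound the right-hand side from below in the limit. For the Lagrangian term, $L_k\to L$ locally uniformly. Convexity of $L$ in $v$ together with weak-$*$ convergence of $\dot\gamma_k$ gives $\liminf\int L_k(\gamma_k,\dot\gamma_k)\ge\int L(\gamma,\dot\gamma)$, by standard Ioffe/Tonelli lower semicontinuity. For the Hamiltonian term, I use upper semicontinuity of superdifferentials under uniform convergence of uniformly semiconcave functions: if $x_k\to x$ and $p_k\in D^+\phi_k(x_k)$, then every cluster point of $p_k$ lies in $D^+\phi(x)$. Together with $H_k\to H$ this gives, pointwise in $s$,
\begin{align*}
\liminf_k H_k\big(\gamma_k(s),\mathbf{p}^\#_{\phi_k,H_k}(\gamma_k(s))\big)\ge \min_{p\in D^+\phi(\gamma(s))}H(\gamma(s),p)=H\big(\gamma(s),\mathbf{p}^\#_{\phi,H}(\gamma(s))\big).
\end{align*}
Fatou's lemma then applies, since everything is bounded. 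Consequently
\begin{align*}
\phi(\gamma(t_2))-\phi(\gamma(t_1))\ge\int_{t_1}^{t_2}\big\{L(\gamma,\dot\gamma)+H(\gamma,\mathbf{p}^\#_{\phi,H}(\gamma))\big\}ds .
\end{align*}
The reverse inequality holds for any absolutely continuous curve by the Fenchel--Young computation preceding Definition \ref{defn:msc}. So equality holds, and $\gamma$ is a strict singular characteristic for $(\phi,H)$.

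Third, I would prove \eqref{eq:P-converge}. Equality forces every liminf above to be a limit and the Fatou step to be tight. Write $f_k(s)=H_k(\gamma_k,\mathbf{p}^\#_k(\gamma_k))$ and $f(s)=H(\gamma,\mathbf{p}^\#(\gamma))$. We have $\liminf f_k\ge f$ pointwise and $\int f_k\to\int f$ along a subsequence, because the $L$-integrals also converge once we split into the two lower-semicontinuous pieces. Hence $(f-f_k)^+\to0$ and $\int|f_k-f|\to0$, so a further subsequence gives $f_{k_i}(s)\to f(s)$ for a.e.\ $s$. A diagonal argument over $[-n,n]$ makes this hold on all of $\R$. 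At such an $s$, any cluster point $\bar p$ of $\mathbf{p}^\#_{k_i}(\gamma_{k_i}(s))$ lies in $D^+\phi(\gamma(s))$ and satisfies $H(\gamma(s),\bar p)=\min_{D^+\phi(\gamma(s))}H$. By strict convexity of $H(x,\cdot)$ on the convex set $D^+\phi(x)$, the minimizer is unique, so $\bar p=\mathbf{p}^\#_{\phi,H}(\gamma(s))$ and the whole subsequence converges.

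The main obstacle is the careful justification of the lower semicontinuity step for the Lagrangian with varying $L_k$. I would handle it by writing $L_k(\gamma_k,\dot\gamma_k)\ge L(\gamma,\dot\gamma_k)-o(1)$ on the compact range of velocities and then using convexity: $L(\gamma,\dot\gamma_k)\ge L(\gamma,\dot\gamma)+\langle L_v(\gamma,\dot\gamma),\dot\gamma_k-\dot\gamma\rangle$, whose last term integrates to $0$ in the limit by weak-$*$ convergence. A secondary point is that upper semicontinuity of $D^+$ must be uniform in $k$; this follows from the uniform semiconcavity constant $K$.
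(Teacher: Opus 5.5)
Your argument is correct and follows the natural variational route. The survey states the theorem without proof, citing \cite{CCHW2024} and attributing it to the variational characterization \eqref{eq:msc_H_phi2}; your proof passes to the limit in that energy identity, using lower semicontinuity of the action together with upper semicontinuity of $D^+\phi_k$ and Fatou, closes with the Fenchel--Young inequality, and then obtains \eqref{eq:P-converge} from $L^1$ convergence of the energy term and uniqueness of the minimal-energy element. One minor correction: the uniform bound on $\dot\gamma_k$ comes from convexity plus local uniform boundedness of $H_k$, which control $\partial_pH_k$ on bounded sets, rather than from superlinearity, but this does not affect the argument.
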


The stability in Theorem \ref{thm:stability} is also essential for the existence issue together with an approximation method (see, for instance, \cite{Yu2006,Cannarsa_Yu2009}). Let us recall the approximation method used in the paper \cite{Cannarsa_Yu2009}. It is known that for any semiconcave function $\phi$ on $M$ with Lipschitz constant $L_0$, semiconcavity constant $C_0$ and any fixed $x\in M$, there exists a sequence of smooth functions $\{\phi_k\}\subset C^{\infty}(M)$ such that:
\begin{enumerate}
	\item Each $\|D\phi_k\|_{C^0}\leqslant L_0$ and $D^2\phi_k$ is bounded above by $C_0I$ uniformly.
	\item $\phi_k$ converges to $\phi$ uniformly on $M$ as $k\to\infty$.
	\item $\lim_{k\to\infty}D\phi_k(x)=\mathbf{p}^{\#}_{\phi,H}(x)$ for given fixed $x\in M$. 
\end{enumerate}
Consider the differential equation
\begin{equation}\label{eq:SSC_k}
	\begin{cases}
		\dot{\gamma}(t)=H_p(\gamma(t),D\phi_k(\gamma(t))),\qquad t\in[0,\infty),\\
		\gamma(0)=x,
	\end{cases}
\end{equation}
and denote by $\gamma_k$ the unique solution of \eqref{eq:SSC_k}. The family $\{\gamma_k\}$ is equi-Lipschitz since $D\phi_k$ is uniformly bounded. Invoking the Ascoli-Arzel\`a theorem, by taking a subsequence if necessary, we can suppose that $\gamma_k$ converges to a Lipschitz curve $\gamma:[0,\infty)\to M$ uniformly on all compact subsets. In \cite{Cannarsa_Yu2009}, it is proved that there exists such a limiting curve $\gamma$ which is a generalized characteristic and it propagates singularities locally. Moreover, such a generalized characteristic satisfies the property that $\dot{\gamma}^+(0)$ exists and
\begin{equation}\label{eq:right_cont}
	\lim_{t\to0^+}\operatorname*{ess\ sup}_{s\in[0,t]}|\dot{\gamma}(s)-\dot{\gamma}^+(0)|=0.
\end{equation}
From Theorem \ref{thm:stability} we conclude that $\gamma$ is indeed a strict singular characteristic which means there is no convex hull in the right side of the generalized characteristic differential inclusion \eqref{eq:GC}.

\begin{The}[Existence of strict singular characteristics, \cite{CCHW2024}]\label{thm:msc exs}
	Let $\phi$ be a semiconcave function on $M$, $H$ be a Tonelli Hamiltonian. Then for any $x\in M$, there exists a strict singular characteristic $\gamma:\R\to M$ with $\gamma(0)=x$ for the pair $(\phi,H)$. In other words, 
	\begin{equation}\tag{SC}
		\begin{cases}
			\dot{\gamma}(t)=H_p(\gamma(t),\mathbf{p}^{\#}_{\phi,H}(\gamma(t))),\qquad a.e. \ t\in\R,\\
			\gamma(0)=x,
		\end{cases}
	\end{equation}
admits a Lipschitz solution.
\end{The}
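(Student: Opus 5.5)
The proof goes by approximation followed by the stability theorem, Theorem~\ref{thm:stability}, applied separately on the forward and backward half-lines.

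\textbf{Forward half-line.} Fix $x\in M$. We use the Cannarsa--Yu mollification recalled above: there are $\phi_k\in C^\infty(M)$ with
\begin{itemize}
\item[(i)] uniform Lipschitz bound $L_0$ and $D^2\phi_k\leqslant C_0I$;
\item[(ii)] $\phi_k\to\phi$ uniformly;
\item[(iii)] $D\phi_k(x)\to\mathbf{p}^\#_{\phi,H}(x)$.
\end{itemize}
Let $\gamma_k$ solve \eqref{eq:SSC_k}. This ODE has a smooth vector field, so $\gamma_k$ is a classical characteristic. Because $\phi_k$ is differentiable, $D^+\phi_k=\{D\phi_k\}$ and $\mathbf{p}^\#_{\phi_k,H}=D\phi_k$. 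Hence each $\gamma_k$ is trivially a strict singular characteristic for $(\phi_k,H)$ on $[0,\infty)$.

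The $\gamma_k$ are equi-Lipschitz, since $|D\phi_k|\leqslant L_0$ and $H_p$ is bounded on the compact set $\{|p|\leqslant L_0\}$. Ascoli--Arzel\`a and a diagonal argument give a subsequence converging uniformly on compact subsets of $[0,\infty)$. The $\phi_k$ are uniformly $C_0$-semiconcave, so Theorem~\ref{thm:stability} applies with $H_k\equiv H$. Its proof is local in time, so it applies on $[0,\infty)$ as well as on $\R$. The limit $\gamma$ therefore satisfies $\dot\gamma=H_p(\gamma,\mathbf{p}^\#_{\phi,H}(\gamma))$ a.e. on $[0,\infty)$, with $\gamma(0)=x$.

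\textbf{Backward half-line.} Reversing time by $\tilde\gamma(t)=\gamma(-t)$ turns the equation into a strict singular characteristic problem for the pair $(-\phi,\check H)$, with $\check H(x,p)=H(x,-p)$. The difficulty is that $-\phi$ is semiconvex rather than semiconcave. So I would not reverse time. Instead I solve \eqref{eq:SSC_k} backward on $(-\infty,0]$ directly: smoothness of $\phi_k$ gives a global flow. I then pass to the limit again with Theorem~\ref{thm:stability}.

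That theorem is stated for curves on all of $\R$. The cleanest route is to solve \eqref{eq:SSC_k} on the whole line from the start, so that $\gamma_k:\R\to M$ is a strict singular characteristic for $(\phi_k,H)$. After a diagonal extraction, $\gamma_k\to\gamma$ locally uniformly on $\R$. Theorem~\ref{thm:stability} then yields directly that $\gamma:\R\to M$ solves (SC), and $\gamma$ is Lipschitz as a uniform limit of equi-Lipschitz curves.

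\textbf{Main obstacle.} The real content lies inside Theorem~\ref{thm:stability}. One must identify the limit of $D\phi_k(\gamma_k(t))$ as the \emph{minimal energy} element $\mathbf{p}^\#_{\phi,H}(\gamma(t))$, not merely some element of $D^+\phi(\gamma(t))$. Upper semicontinuity of $D^+$ alone, as in Lemma~\ref{lem:weak} and Theorem~\ref{thm:Young}, would only give the convex-hull inclusion \eqref{eq:GC}.

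Were I to reprove it, I would use the energy identity \eqref{eq:msc_H_phi2}:
\begin{align*}
\phi_k(\gamma_k(t_2))-\phi_k(\gamma_k(t_1))=\int_{t_1}^{t_2}\bigl\{L(\gamma_k,\dot\gamma_k)+H(\gamma_k,D\phi_k(\gamma_k))\bigr\}\,ds.
\end{align*}
I would pass to the limit using three ingredients:
\begin{itemize}
\item lower semicontinuity of the action under weak-$*$ convergence of $\dot\gamma_k$;
\item the lower semicontinuity $\liminf_k H(\gamma_k,D\phi_k(\gamma_k))\geqslant H(\gamma,\mathbf{p}^\#_{\phi,H}(\gamma))$, which follows from upper semicontinuity of $D^+$ together with minimality of $\mathbf{p}^\#$;
\item uniform convergence of $\phi_k$.
\end{itemize}
Together these give the inequality ``$\geqslant$'' in \eqref{eq:msc_H_phi2} for $\gamma$ with the selection $\mathbf{p}^\#_{\phi,H}$. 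The reverse inequality always holds by Fenchel--Young. Equality then forces $\dot\gamma=H_p(\gamma,\mathbf{p}^\#_{\phi,H}(\gamma))$ a.e., which is (SC).
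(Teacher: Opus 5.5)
Your proposal is correct and follows essentially the paper's route: take the Cannarsa--Yu smooth approximations $\phi_k$, solve for the unique characteristics $\gamma_k$ of the smooth field $H_p(x,D\phi_k(x))$, extract a limit by Ascoli--Arzel\`a, and apply the stability Theorem~\ref{thm:stability} to upgrade that limit to a strict singular characteristic. Solving the smooth ODE on all of $\R$ from the outset cleanly covers the backward half-line, which the paper's sketch (written on $[0,\infty)$) leaves implicit. You are also right that property (iii) of the approximation is not needed for existence, because the minimal-energy selection comes out of the energy identity in the stability argument.
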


Now, we ready to answer the question: \emph{what is the relation between the notion of strict singular characteristic and broken characteristic in \eqref{eq:SGC2}}? The answer is they two coincide!

\begin{The}[\cite{CCHW2024}]\label{thm:right_derivative}
Suppose $\phi$ is a semiconcave function on $M$, $H$ is a Tonelli Hamiltonian, and $\gamma:[0,T]\to M$ is a strict singular characteristic for the pair $(\phi,H)$.
\begin{enumerate}
	\item The right derivative $\dot{\gamma}^+(t)$ exists for all $t\in[0,T)$ and satisfies \eqref{eq:SGC2}.
	\item For all $t\in[0,T)$
	\begin{align*}
		\lim_{\tau\to t^+}\frac 1{\tau-t}\int^\tau_tH(\gamma(s),\mathbf{p}^\#_{\phi,H}(\gamma(s)))\ ds=H(\gamma(t),\mathbf{p}^\#_{\phi,H}(\gamma(t))).
	\end{align*}
\end{enumerate}
\end{The}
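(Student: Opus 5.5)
The plan is to establish (2) first and then derive (1) from it. The tool is the EDI identity \eqref{eq:msc_H_phi2} for the minimal energy selection, combined with the lower semicontinuity of $x\mapsto H(x,\mathbf{p}^\#_{\phi,H}(x))$ and the upper semicontinuity of $D^+\phi$. Fix $t\in[0,T)$, write $x=\gamma(t)$ and $p^\#=\mathbf{p}^\#_{\phi,H}(x)$, and work in a chart around $x$.

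\textbf{Lower bound in (2).} Since $\gamma$ is continuous and $y\mapsto H(y,\mathbf{p}^\#_{\phi,H}(y))$ is lower semicontinuous, we get
\begin{align*}
\liminf_{\tau\to t^+}\frac1{\tau-t}\int_t^\tau H(\gamma(s),\mathbf{p}^\#_{\phi,H}(\gamma(s)))\,ds\geqslant H(x,p^\#).
\end{align*}

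\textbf{Upper bound in (2).} I compare the EDI identity on $[t,\tau]$ with the one-sided semiconcavity estimate at $x$. Semiconcavity gives, for every $p\in D^+\phi(x)$,
\begin{align*}
\phi(\gamma(\tau))-\phi(x)\leqslant\langle p,\gamma(\tau)-x\rangle+C|\gamma(\tau)-x|^2=\int_t^\tau\langle p,\dot\gamma(s)\rangle\,ds+O((\tau-t)^2).
\end{align*}
Take $p=p^\#$. Since $\dot\gamma=H_p(\gamma,\mathbf{p}^\#_{\phi,H}(\gamma))$ a.e., Fenchel--Young is an equality along $\gamma$, so
\begin{align*}
\int_t^\tau\bigl\{L(\gamma,\dot\gamma)+H(\gamma,\mathbf{p}^\#_{\phi,H}(\gamma))\bigr\}\,ds\leqslant\int_t^\tau\langle p^\#,\dot\gamma\rangle\,ds+O((\tau-t)^2).
\end{align*}
Applying the Fenchel--Young inequality to the right-hand side and using that $\gamma(s)\to x$ uniformly, this becomes
\begin{align*}
\int_t^\tau H(\gamma,\mathbf{p}^\#_{\phi,H}(\gamma))\,ds\leqslant\int_t^\tau H(\gamma(s),p^\#)\,ds+O((\tau-t)^2)=(\tau-t)H(x,p^\#)+o(\tau-t).
\end{align*}
This yields the matching $\limsup$ bound and proves (2).

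\textbf{Part (1).} The Fenchel--Young defect in the last step, $\int_t^\tau\{L(\gamma,\dot\gamma)+H(\gamma,p^\#)-\langle p^\#,\dot\gamma\rangle\}\,ds$, is also $o(\tau-t)$. By uniform convexity of $L$ in $v$ (valid on the bounded range of $\dot\gamma$), this defect controls $\int_t^\tau|\dot\gamma(s)-H_p(\gamma(s),p^\#)|^2\,ds$. Hence $\frac1{\tau-t}\int_t^\tau|\dot\gamma-H_p(x,p^\#)|^2\,ds\to0$, which gives $\frac{\gamma(\tau)-x}{\tau-t}\to H_p(x,p^\#)$. So $\dot\gamma^+(t)$ exists and equals $H_p(\gamma(t),\mathbf{p}^\#_{\phi,H}(\gamma(t)))$, which is \eqref{eq:SGC2}.

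The main obstacle is making the semiconcavity estimate precise on a manifold and ensuring that all error terms are genuinely $o(\tau-t)$ uniformly. The key input here is $|\gamma(\tau)-x|\leqslant C(\tau-t)$, which holds because $\gamma$ is Lipschitz: $\mathbf{p}^\#_{\phi,H}$ is bounded by the Lipschitz constant of $\phi$. If the direct argument runs into trouble, the fallback is to use the upper semicontinuity of $D^+\phi$ to localize $\mathbf{p}^\#_{\phi,H}(\gamma(s))$ near $D^+\phi(x)$, and then use strict convexity of $H$ to pin its limit to $p^\#$.
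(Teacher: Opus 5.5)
The survey quotes this theorem from \cite{CCHW2024} without a proof, so there is no in-text argument to compare with. Your proposal is correct and complete as it stands. It uses the same mechanism as the Remark after Definition~\ref{defn:msc}. You test the energy identity on $[t,\tau]$ against the semiconcavity inequality at $x=\gamma(t)$ for the single covector $p^\#=\mathbf{p}^\#_{\phi,H}(x)$, transported as a constant covector in a chart where Fenchel--Young holds fibrewise. Lower semicontinuity of $y\mapsto H(y,\mathbf{p}^\#_{\phi,H}(y))$ then closes the squeeze.

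Two lines should be made explicit. First, the defect bound needs both halves of (2):
\[
0\leqslant D(\tau)\leqslant\int_t^\tau\bigl\{H(\gamma(s),p^\#)-H(\gamma(s),\mathbf{p}^\#_{\phi,H}(\gamma(s)))\bigr\}\,ds+O((\tau-t)^2).
\]
The right-hand side is $o(\tau-t)$ only because of your $\liminf$ bound; the upper bound alone does not give this. Second, state the coercivity identity. With $w=H_p(y,p)$,
\[
L(y,v)+H(y,p)-\langle p,v\rangle=L(y,v)-L(y,w)-\langle L_v(y,w),v-w\rangle\geqslant\tfrac{\alpha}{2}|v-w|^2,
\]
where $\alpha>0$ is the minimum of $L_{vv}$ on a compact set containing the bounded velocities involved. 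With these two lines, your \emph{main obstacle} is not an obstacle and the fallback via upper semicontinuity of $D^+\phi$ is unnecessary. Also, you only need an $o(\tau-t)$ remainder from semiconcavity, so any semiconcavity modulus works.

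A more familiar route takes cluster points $v$ of $(\gamma(\tau)-x)/(\tau-t)$ and uses Jensen's inequality for $L$ to force $\langle p^\#,v\rangle\geqslant L(x,v)+H(x,p^\#)$. Your pointwise-in-$s$ comparison gives a stronger, quantitative conclusion:
\[
\frac1{\tau-t}\int_t^\tau|\dot\gamma(s)-\dot\gamma^+(t)|^2\,ds\to0,
\]
so every $t\in[0,T)$ is a right Lebesgue point of $\dot\gamma$. Since $\mathbf{p}^\#_{\phi,H}(\gamma(s))=L_v(\gamma(s),\dot\gamma(s))$ for a.e. $s$ and $L_v$ is locally Lipschitz, the momenta converge to $p^\#$ in the same averaged sense. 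Hence (2) holds with $H$ replaced by any continuous $f$ on $T^*M$. Integrating that pathwise statement against $\bar\mu$ and using dominated convergence gives item (3) of Theorem~\ref{thm:CE}.
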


\subsection{More on intrinsic approach}
\label{subsec:intrinsic}

In subsection \ref{subsec:weak_conv}, by taking weak limit of the minimizing movement constructed from iterative intrinsic singular characteristic as the time step tends to $0$, one obtains a solution \eqref{eq:GC}. This relaxation approach determines the convex hull in \eqref{eq:GC} and this is parentally a obstacle for the uniqueness of \eqref{eq:GC}. However, the EDI formalism of maximal slope curve introduced in Definition \ref{defn:msc} allow us to construct strict singular characteristic from this intrinsic setting.

\begin{The}[\cite{CCHW2024}]\label{thm:solution limit}
Under the setting in subsection \ref{subsec:weak_conv}, fix $T>0$ and suppose $\{\Delta_k\}$ is a sequence of partitions of $[0,T]$ with $|\Delta_k|\leqslant\tau(\phi)$ for all $k\in\N$, and each $\gamma_k:[0,T]\to M$, $k\in\N$ is a solution of the equation
	\begin{equation}\label{eq:ODE}
		\dot{\gamma}_k(t)=W_{\Delta_k}(t,\gamma_k(t)),\qquad a.e.\ t\in[0,T].
	\end{equation}
Further assume that:
\begin{enumerate}
	\item $\lim_{k\to\infty}|\Delta_k|=0$,
	\item $\gamma_k$ converges uniformly to $\gamma:[0,T]\to M$.
\end{enumerate}
Then $\gamma$ is a strict singular characteristic for the pair $(\phi,H)$, that is,
\begin{equation}
	\dot{\gamma}(t)=H_p(\gamma(t),\mathbf{p}^{\#}_{\phi,H}(\gamma(t))),\qquad a.e.\ t\in[0,T].
\end{equation}
\end{The}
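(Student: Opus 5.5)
Since every limit of the $\gamma_k$ already satisfies \eqref{eq:GC} (Theorem~\ref{thm:Young}), the task is to upgrade this to the EDI equality \eqref{eq:msc_H_phi2} with the minimal energy selection. By the Remark following Definition~\ref{defn:msc}, it suffices to show
\begin{equation*}
	\phi(\gamma(T))-\phi(\gamma(0))\geqslant\int_0^T\Big\{L(\gamma,\dot\gamma)+H\big(\gamma,\mathbf{p}^\#_{\phi,H}(\gamma)\big)\Big\}\,ds ,
\end{equation*}
since the converse inequality holds for every absolutely continuous curve. The same argument applies on every subinterval $[t_1,t_2]$. The plan is to obtain this as a lower semicontinuous limit of exact energy identities satisfied by the approximating curves $\gamma_k$.

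\textbf{Step 1: an exact identity for each $\gamma_k$.} On each partition interval $[\tau_{i-1},\tau_i)$ of $\Delta_k$, set $u(s,x)=T^+_{\tau_i-s}\phi(x)$. By Theorem~\ref{thm:T^-T^+1} this function is $C^{1,1}$ for $s<\tau_i$ and solves $\partial_s u = H(x,D_xu)$. Moreover $\gamma_k$ is exactly the characteristic $\dot\gamma_k=H_p(\gamma_k,D_xu)$, by Proposition~\ref{pro:ISC_t}. Hence
\begin{equation*}
	\frac{d}{ds}u(s,\gamma_k(s))=H(\gamma_k,p_k)+\langle p_k,\dot\gamma_k\rangle=L(\gamma_k,\dot\gamma_k)+2H(\gamma_k,p_k),
\end{equation*}
where $p_k(s)=DT^+_{\tau_i-s}\phi(\gamma_k(s))$. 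Integrating over the interval and using $T^+_0\phi=\phi$ at $s=\tau_i$ gives
\begin{equation*}
	\phi(\gamma_k(\tau_i))-T^+_{\tau_i-\tau_{i-1}}\phi(\gamma_k(\tau_{i-1}))=\int_{\tau_{i-1}}^{\tau_i}\big\{L+2H\big\}\,ds .
\end{equation*}
We then rewrite $T^+_\tau\phi(y)=\phi(y)+\tau\,h_\tau(y)$, with $h_\tau\geqslant0$ bounded, using $T^+_\tau\phi\geqslant\phi$ and the Lipschitz bounds. Summing over $i$, the left-hand side telescopes to $\phi(\gamma_k(T))-\phi(\gamma_k(0))$ minus a Riemann-type sum $\sum_i(\tau_i-\tau_{i-1})\,h(\gamma_k(\tau_{i-1}))$.

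\textbf{Step 2: identify the defect.} Along each step, $H(\gamma_k,p_k)$ is constant by conservation of energy on Hamiltonian orbits. In addition, the maximizer characterization gives $T^+_\tau\phi(y)-\phi(y)\approx\tau\,\max_{p\in D^+\phi}(-H(y,p))$ up to $o(\tau)$, and $\max(-H)=-H(y,\mathbf{p}^\#)$. The goal is therefore an identity of the form
\begin{equation*}
	\phi(\gamma_k(T))-\phi(\gamma_k(0))=\int_0^T\big\{L(\gamma_k,\dot\gamma_k)+H(\gamma_k,p_k)\big\}\,ds+o(1).
\end{equation*}
Equivalently, we can avoid the expansion entirely: on each step, $T^+_\tau\phi(y)=\phi(z)-A_\tau(y,z)$ with $z=\gamma_k(\tau_i)$ and $A_\tau(y,z)=\int L$ along $\gamma_k$. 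This yields directly $\int_{\tau_{i-1}}^{\tau_i}H(\gamma_k,p_k)\,ds=\tau\,H(\gamma_k(\tau_i),p_k(\tau_i^-))$, where $p_k(\tau_i^-)\in D^+\phi(\gamma_k(\tau_i))$ by Theorem~\ref{thm:T^-T^+1}(3). Consequently $H(\gamma_k,p_k)\geqslant H(\gamma_k(\tau_i),\mathbf{p}^\#_{\phi,H}(\gamma_k(\tau_i)))$ at the endpoints of the steps, and we expect an inequality with the correct sign.

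\textbf{Step 3: pass to the limit.} Take $\liminf$ as $k\to\infty$. The $L$-term is handled by lower semicontinuity of the action under uniform convergence (Tonelli convexity, with $\dot\gamma_k\rightharpoonup\dot\gamma$ weakly-$*$). The $H$-term requires
\begin{equation*}
	\liminf_k\sum_i\tau\,H\big(\gamma_k(\tau_i),\mathbf{p}^\#_{\phi,H}(\gamma_k(\tau_i))\big)\geqslant\int_0^T H\big(\gamma,\mathbf{p}^\#_{\phi,H}(\gamma)\big)\,ds .
\end{equation*}
This is the \textbf{main obstacle}. The function $x\mapsto H(x,\mathbf{p}^\#_{\phi,H}(x))$ is only lower semicontinuous, whereas the sum is evaluated at discrete points. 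I would first replace the sum by the integral of the lsc function along a piecewise constant interpolation of $\gamma_k$, which converges uniformly to $\gamma$, and then apply Fatou's lemma with pointwise lsc. The error from replacing the actual $p_k$ by endpoint values is $O(|\Delta_k|)$, thanks to energy conservation and the Lipschitz bounds.

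Combining the three steps yields the displayed inequality. Hence $\gamma$ is a maximal slope curve, and by the Remark following Definition~\ref{defn:msc} it is a strict singular characteristic.
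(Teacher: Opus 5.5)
There is a genuine gap. Your overall architecture matches the paper's sketch. You reduce to the single inequality $\phi(\gamma(T))-\phi(\gamma(0))\geqslant\int_0^T\{L(\gamma,\dot\gamma)+H(\gamma,\mathbf{p}^\#_{\phi,H}(\gamma))\}\,ds$, obtain it as a $\liminf$ of exact identities for the $\gamma_k$, and conclude with lower semicontinuity of the action and Fatou's lemma. The gap lies in the identity for $\gamma_k$, which is the heart of the argument.

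In Step~1 the sign is wrong. Since $\frac{d}{dt}T^+_t\phi=H(x,DT^+_t\phi)$, the function $u(s,x)=T^+_{\tau_i-s}\phi(x)$ satisfies $\partial_s u=-H(x,D_xu)$. Hence $\frac{d}{ds}u(s,\gamma_k(s))=-H(\gamma_k,p_k)+\langle p_k,\dot\gamma_k\rangle=L(\gamma_k,\dot\gamma_k)$, not $L+2H$. The correct statement is the calibration identity $\phi(\gamma_k(\tau_i))-T^+_{\tau_i-\tau_{i-1}}\phi(\gamma_k(\tau_{i-1}))=\int_{\tau_{i-1}}^{\tau_i}L(\gamma_k,\dot\gamma_k)\,ds$, which you use yourself in Step~2.

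Step~2 has further errors:
\begin{itemize}
\item $T^+_\tau\phi\geqslant\phi$ is false in general; for instance, if $\phi$ is constant and $\min L>0$, then $T^+_\tau\phi<\phi$.
\item The first-order expansion reads $T^+_\tau\phi(y)-\phi(y)=\tau\min_{p\in D^+\phi(y)}H(y,p)+o(\tau)=\tau H(y,\mathbf{p}^\#_{\phi,H}(y))+o(\tau)$, not $-\tau H(y,\mathbf{p}^\#_{\phi,H}(y))$.
\item This $o(\tau)$ is not uniform in $y$ near $\mathrm{Sing}(\phi)$, so it cannot be summed over the roughly $T/\tau$ steps.
\end{itemize}
So your displayed \emph{goal identity} is never actually derived.

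More importantly, the goal identity targets the wrong quantity. After the calibration identity, what remains on each step is the Lax--Oleinik defect $T^+_\tau\phi(y)-\phi(y)$ at the \emph{frozen} point $y=\gamma_k(\tau_{i-1})$. Energy conservation instead evaluates $\int_{\tau_{i-1}}^{\tau_i}H(\gamma_k,p_k)\,ds=\tau H(y,DT^+_\tau\phi(y))$. These two can differ by order $\tau$ on a single step. For example, take locally $\phi(x)=-|x|$, $H=\frac12 p^2$ and $y=-\tau/2$: the defect is $3\tau/8$, while $\tau H(y,DT^+_\tau\phi(y))=\tau/8$. So your endpoint argument does not control the term that actually appears. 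You would need an extra one-sided estimate such as $T^+_\tau\phi(y)-\phi(y)\geqslant\tau H(y,DT^+_\tau\phi(y))-C\tau^2$, and you do not supply one.

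The missing idea is the exact representation
\[
T^+_\tau\phi(y)-\phi(y)=\int_0^\tau\frac{d}{ds}T^+_s\phi(y)\,ds=\int_0^\tau H\big(y,DT^+_s\phi(y)\big)\,ds .
\]
After summation, with $y_k(s)=\gamma_k(\tau^-_{\Delta_k}(s))$, this gives the exact identity
\[
\phi(\gamma_k(T))-\phi(\gamma_k(0))=\int_0^T\Big\{L(\gamma_k,\dot\gamma_k)+H\big(y_k(s),DT^+_{s-\tau^-_{\Delta_k}(s)}\phi(y_k(s))\big)\Big\}\,ds .
\]
For a.e.\ $s$ one has $y_k(s)\to\gamma(s)$ and $s-\tau^-_{\Delta_k}(s)\to0^+$. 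By the upper semicontinuity used in Lemma~\ref{lem:weak}, every cluster point of $DT^+_{s-\tau^-_{\Delta_k}(s)}\phi(y_k(s))$ lies in $D^+\phi(\gamma(s))$. Hence the $\liminf$ of the $H$-integrand is at least $\min_{p\in D^+\phi(\gamma(s))}H(\gamma(s),p)=H(\gamma(s),\mathbf{p}^\#_{\phi,H}(\gamma(s)))$, and Fatou's lemma finishes.

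Your endpoint observation can still be used here, pointwise in $s$. By Arnaud's theorem, $H(y,DT^+_s\phi(y))=H(z_s,q_s)$ with $q_s\in D^+\phi(z_s)$ and $d(z_s,y)\leqslant Cs$. With this exact identity, the step you flagged as the main obstacle becomes routine.
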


For the sake of further analysis, we need to sketch the proof. Let $\{\tau_i\}$ be the partition points of $[0,T]$, for every $i=0,\ldots,N-1$, 
\begin{align*}
	T^+_{\tau_{i+1}-\tau_{i}}\phi(\gamma(\tau_{i}))=\phi(\gamma(\tau_{i+1}))-\int^{\tau_{i+1}}_{\tau_{i}}L(\gamma,\dot{\gamma})\ ds.
\end{align*}
Thus
\begin{align*}
	T^+_{\tau_{i+1}-\tau_i}\phi(\gamma(\tau_{i}))-\phi(\gamma(\tau_{i}))&=\,\int^{\tau_{i+1}}_{\tau_i}\frac d{ds}T^+_{s-\tau_i}\phi(\gamma(\tau_{i}))\ ds\\
			&=\,\int^{\tau_{i+1}}_{\tau_i}H(\gamma(\tau_{i}),DT^+_{s-\tau_i}\phi(\gamma(\tau_{i})))\ ds.
\end{align*}
It follows that
\begin{align*}
	&\,\phi(\gamma(\tau_{i+1}))-\phi(\gamma(\tau_{i}))\\
			=&\,(\phi(\gamma(\tau_{i+1}))-T^+_{\tau_{i+1}-\tau_i}\phi(\gamma(\tau_{i})))+(T^+_{\tau_{i+1}-\tau_i}\phi(\gamma(\tau_{i}))-\phi(\gamma(\tau_{i})))\\
			=&\,\int^{\tau_{i+1}}_{\tau_i}L(\gamma(s),\dot{\gamma}(s))\ ds+\int^{\tau_{i+1}}_{\tau_i}H(\gamma(\tau_{i}),DT^+_{s-\tau_i}\phi(\gamma(\tau_{i})))\ ds.
\end{align*}
Summing up for $i=0,\ldots,N-1$ we obtain
\begin{align*}
	&\,\phi(\gamma(T))-\phi(\gamma(0))\\
	=&\,\int^T_{0}L(\gamma(s),\dot{\gamma}(s))\ ds+\sum^{N-1}_{i=0}\int^{\tau_{i+1}}_{\tau_i}H(\gamma(\tau_{i}),DT^+_{s-\tau_i}\phi(\gamma(\tau_{i})))\ ds\\
			=&\,\int^T_{0}\Big\{L(\gamma(s),\dot{\gamma}(s))+H(\gamma(\tau_{\Delta}^{-}(s)),DT^{+}_{s-\tau^-_{\Delta}(s)}(\gamma(\tau^-_{\Delta}(s))))\Big\}\ ds,
\end{align*}
where $\tau_{\Delta}^{-}(t)=\sup\{\tau_i|\,\tau_i\leqslant t\}$. The statements in Theorem \ref{thm:solution limit} then is proved by taking $|\Delta|\to0^+$. 

If $\phi$ is a weak KAM solution of \eqref{eq:HJ_wk}, $T^+_t\phi(x)-\phi(x)=0$ if $\tau_\phi(x)>0$ and $t\in[0,\tau_\phi(x))$. Thus, a key point in this intrinsic construction relies on how to understand the error term $T^+_t\phi(x)-\phi(x)$, especially when $x\in\text{Cut}\,(\phi)$. A very useful first order property 
\begin{align*}
	\frac d{dt}T^+_t\phi(x)=H(x,DT^+_t\phi(x)),\qquad \forall x\in M, t\ll1,
\end{align*}
validates this construction, especially the derivation of the energy dissipation term $H(x,\mathbf{p}^\#_{\phi,H}(x))$. 

We know that a semiconcave function $\phi$ satisfies the property
\begin{align*}
	(T^-_t\circ T^+_t)\phi(x)=\phi(x),\qquad\forall t\geqslant0, x\in M
\end{align*}
if and only if $\phi$ is a weak KAM solution (\cite{Cannarsa_Cheng_Hong2025}). Thus, the cut time function can be reformulated as 
\begin{align*}
	\tau_{\phi}(x)=\sup\{t\geqslant0: (T^-_t\circ T^+_t-T^+_t\circ T^-_t)\phi(x)=0\}.
\end{align*}
Recall $\text{Cut}\,(\phi)=\{x\in M:\tau_{\phi}(x)=0\}$. Then, the energy dissipation term $H(x,\mathbf{p}^\#_{\phi,H}(x))$ also embodies the non-commutativity of Lax-Oleinik operators $T^\pm_t$, and the creation and evolution of singularities. 

\subsection{Generalized Hamiltonian gradient flow}
\label{subsec:GHGF}

We have already discussed the existence and stability properties of \eqref{eq:SGC2} in section \ref{subsec:wellposeness}. However, it is still open that if this generazlied Hamiltonian gradient system admits a unique solution from any initial point, except for the case when $H(x,p)$ has Riemannian structure, i.e., $H$ is quadratic in $p$-variable. Thus, we can only regard  \eqref{eq:SGC2} as a differential inclusion. 

Now we adopt some idea from the theory of topological dynamical systems (see, for instance, \cite{Sell_book1971}). Let $M$ be a closed manifold and let $\pi:TM\to M$, $\pi(x,v)=x$ be the projection and $C^0_+:=C^0([0,+\infty),M)$. We define a push-forward semigroup $\mathbf{P}^t:C^0_+\to C^0_+$, $t\geqslant0$, by
\begin{align*}
	\mathbf{P}^t(\gamma)(s)=\gamma(t+s),\qquad\forall\gamma\in C^0_+,
\end{align*}
and the evaluation map $\mathbf{V}^t:C^0_+\to M$, $t\geqslant0$, by
\begin{align*}
	\mathbf{V}^t(\gamma)=\gamma(t).
\end{align*}
We introduce a distance
\begin{align*}
	d_+(\gamma_1,\gamma_2)=\sup_{t\geqslant0}e^{-t}d(\gamma_1(t),\gamma_2(t)),\qquad\gamma_1,\gamma_2\in C^0_+.
\end{align*}

\begin{Lem}
\hfill
\begin{enumerate}
	\item $(C^0_+,d_+)$ is a complete metric space. The topology induced by the distance $d_+$ is exactly the topology of convergence on compact subsets.
	\item The map $(t,\gamma)\mapsto\mathbf{P}^t(\gamma)$ is continuous on $[0,+\infty)\times C^0_+$.
	\item The map $(t,\gamma)\mapsto\mathbf{V}^t(\gamma)$ is continuous on $[0,+\infty)\times C^0_+$.
\end{enumerate}	
\end{Lem}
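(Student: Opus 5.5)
The three assertions are elementary facts about the weighted uniform metric, and the plan is to verify them directly from the definition of $d_+$. Compactness of $M$ is used throughout: $d$ is bounded by $\mathrm{diam}(M)<\infty$, so $d_+$ is finite and is clearly a metric.

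For (1) I will compare $d_+$ with the family of seminorms $\rho_T(\gamma_1,\gamma_2)=\sup_{t\in[0,T]}d(\gamma_1(t),\gamma_2(t))$. On one side, $\rho_T\leqslant e^{T}d_+$. On the other side, splitting the supremum at $T$ gives
\begin{align*}
	d_+(\gamma_1,\gamma_2)\leqslant\max\{\rho_T(\gamma_1,\gamma_2),\,e^{-T}\mathrm{diam}(M)\}.
\end{align*}
Given $\varepsilon>0$, I choose $T$ with $e^{-T}\mathrm{diam}(M)<\varepsilon$. These two inequalities together show that $d_+$-balls and the neighbourhoods of the compact-open (locally uniform) topology are mutually nested, so the two topologies coincide. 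For completeness, I take a $d_+$-Cauchy sequence. By the first inequality it is uniformly Cauchy on each $[0,T]$. Since $M$ is complete, it converges uniformly on compacts to a continuous limit, and the second inequality then yields convergence in $d_+$.

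For (2) I fix $(t_0,\gamma_0)$ and let $(t,\gamma)\to(t_0,\gamma_0)$. Because the topology is that of locally uniform convergence, it suffices to control $\sup_{s\in[0,T]}d(\gamma(t+s),\gamma_0(t_0+s))$ for each fixed $T$. I bound this by
\begin{align*}
	\sup_{r\in[0,T+t_0+1]}d(\gamma(r),\gamma_0(r))+\sup_{s\in[0,T]}d(\gamma_0(t+s),\gamma_0(t_0+s)).
\end{align*}
The first term tends to $0$ by locally uniform convergence. The second tends to $0$ by uniform continuity of $\gamma_0$ on the compact interval $[0,T+t_0+1]$, once $|t-t_0|<1$.

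Part (3) is the special case $s=0$ of the same estimate, since $\mathbf{V}^t=\mathbf{V}^0\circ\mathbf{P}^t$ and $\mathbf{V}^0$ is $1$-Lipschitz with respect to $d_+$.

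There is no real obstacle here. The only point needing care is in (1): the tail $t>T$ must be absorbed by the factor $e^{-T}$, and this relies on $M$ having finite diameter.
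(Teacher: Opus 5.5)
Your proof is correct. The paper states this lemma without proof, treating it as a standard fact about the weighted sup-metric, so your argument supplies the routine verification it omits. The comparisons $\rho_T\leqslant e^{T}d_+$ and $d_+\leqslant\max\{\rho_T,e^{-T}\mathrm{diam}(M)\}$ give both the identification of the topology and completeness. Parts (2) and (3) then follow from uniform continuity of $\gamma_0$ on compact intervals together with $\mathbf{V}^t=\mathbf{V}^0\circ\mathbf{P}^t$. One terminological nit: the $\rho_T$ are pseudometrics, not seminorms, since $C^0_+$ has no linear structure.
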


We regard $\mathcal{S}^+_{\phi,H}$, the set of all the solution of \eqref{eq:SGC2}, as a subspace of $(C^0_+,d_+)$. It is clear that $\mathbf{P}^t(\mathcal{S}^+_{\phi,H})\subset\mathcal{S}^+_{\phi,H}$ for all $t\geqslant0$. Thus, we lift the forward dynamics of \eqref{eq:SGC2} to a real semi-flow $(\mathcal{S}^+_{\phi,H},\mathbf{P}^t)$. 

\subsection{Mather measure as a maximal measure}
\label{subsec:Mather_max}

Now, we will study the invariant measures of the dynamical system $(\mathcal{S}^+_{\phi,H},\mathbf{P}^t)$. In the literature, this is closely connected to the \emph{occupational invariant measures} for differential inclusions (see \cite{Artstein1999}). We denote by $\mathbb{P}(\mathcal{S}^+_{\phi,H})$ the space of all Borel probability measures on $\mathcal{S}^+_{\phi,H}$, and $\mathcal{IM}(\phi,H)$ the set of all invariant measure of the semigroup $(\mathcal{S}^+_{\phi,H},\mathbf{P}^t)$. More precisely 
\begin{align*}
	\mathcal{IM}(\phi,H)=\{\mu\in \mathbb{P}(\mathcal{S}^+_{\phi,H}):(\mathbf{P}^t)_\#\mu=\mu, \forall t\geqslant0\}.
\end{align*}
Recall that
\begin{itemize}
	\item let $\widehat{\mathscr{M}_H}$ be the set of Mather measures on $TM$ of $H$;
	\item let $\mathscr{M}_H=\pi_{\#}(\widehat{\mathscr{M}_H})$ be the set of projected Mather measures on $M$ of $H$;
	\item let $\widehat{\mathscr{M}(H)}$ be the Mather set on $TM$ of $H$;
	\item let $\mathscr{M}(H)=\pi(\widehat{\mathscr{M}(H)})$ be the projected Mather set on $M$ of $H$.
	\item let $\{\Phi_L^t\}$ be the Euler-Lagrangian flow of $L$.
\end{itemize}

\begin{Pro}\label{pro:ineq}
Let $\phi\in\mathrm{SCL}\,(M)$ and $H$ be a Tonelli Hamiltonian.
\begin{enumerate}
	\item $(\mathcal{S}^+_{\phi,H},d_+)$ is a compact subspace of $(C^0_+,d_+)$.
	\item For each $\gamma\in\mathcal{S}^+_{\phi,H}$,
	\begin{align*}
		\limsup_{t\to+\infty}\frac 1t\int^t_0H(\gamma(s),\mathbf{p}_{\phi,H}^\#(\gamma(s))\ ds=-\liminf_{t\to+\infty}\frac 1t\int^t_0L(\gamma(s),\dot{\gamma}(s))\ ds\leqslant c[H].
	\end{align*}
	\item $\mathcal{IM}(\phi,H)$ is nonempty compact convex set in weak topology.
	\item For any $\mu\in\mathcal{IM}(\phi,H)$ we have
	\begin{align*}
		(\mathbf{V}^t)_\#\mu=(\mathbf{V}^0)_\#\mu,\qquad\forall t\geqslant0,
	\end{align*}
	and for any bounded Borel measurable function $f:M\to\R$
	\begin{align*}
		\int_{\mathcal{S}^+_{\phi,H}}f(\gamma(t))\ d\mu=\int_{\mathcal{S}^+_{\phi,H}}f(\gamma(0))\ d\mu,\qquad t\geqslant0.
	\end{align*}
	\item For any $\mu\in\mathcal{IM}(\phi,H)$ and $t\geqslant0$ we have
	\begin{align*}
		&\,\int_{\mathcal{S}^+_{\phi,H}}H(\gamma(t),\mathbf{p}_{\phi,H}^\#(\gamma(t)))\ d\mu=\int_{\mathcal{S}^+_{\phi,H}}H(\gamma(0),\mathbf{p}_{\phi,H}^\#(\gamma(0)))\ d\mu\\
		=&\,-\int_{\mathcal{S}^+_{\phi,H}}L(\gamma(0),\dot{\gamma}^+(0))\ d\mu=-\int_{\mathcal{S}^+_{\phi,H}}L(\gamma(t),\dot{\gamma}^+(t))\ d\mu\\
		\leqslant&\,c[H].
	\end{align*}
\end{enumerate}
\end{Pro}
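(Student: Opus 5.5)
We prove the five items in order; each step feeds the next.

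\emph{Step 1: compactness of $\mathcal{S}^+_{\phi,H}$.} Every $\gamma\in\mathcal{S}^+_{\phi,H}$ has $|\dot\gamma|\leqslant \sup\{|H_p(x,p)|: |p|\leqslant \Lip\phi\}$, so the family is equi-Lipschitz with values in the compact manifold $M$. By Ascoli--Arzel\`a it is relatively compact for uniform convergence on compact subsets, which is the $d_+$-topology by the preceding Lemma. Closedness follows from Theorem~\ref{thm:stability} with $\phi_k=\phi$ and $H_k=H$: it is stated for curves on $\R$, but the argument is local in time. Thus a locally uniform limit of strict singular characteristics on $[0,+\infty)$ is again one.

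\emph{Step 2: the ergodic inequality (2).} Along $\gamma$ the maximal slope identity \eqref{eq:msc_H_phi2} gives
\[
\phi(\gamma(t))-\phi(\gamma(0))=\int_0^t\{L(\gamma,\dot\gamma)+H(\gamma,\mathbf{p}^\#_{\phi,H}(\gamma))\}\,ds.
\]
The left side is bounded by $2\|\phi\|_\infty$. Dividing by $t$ and letting $t\to\infty$ shows that the $\limsup$ of the averaged $H$-term equals minus the $\liminf$ of the averaged action. For the bound, recall that $\liminf_{t}\frac1t\int_0^tL(\gamma,\dot\gamma)\,ds\geqslant -c[H]$ for every absolutely continuous curve. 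Indeed, pick a subsolution $u$ (or use $A_t(x,y)+c[H]t\geqslant -C$ uniformly from weak KAM theory), so that $\int_0^tL\geqslant u(\gamma(t))-u(\gamma(0))-c[H]t$.

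\emph{Step 3: invariant measures (3).} The semiflow $\mathbf{P}^t$ acts continuously on the compact metric space $\mathcal{S}^+_{\phi,H}$, which is invariant. We apply Krylov--Bogolyubov: fix $\gamma_0$ and take a weak limit point of $\frac1T\int_0^T\delta_{\mathbf{P}^s\gamma_0}\,ds$, which by continuity in $(t,\gamma)$ is $\mathbf{P}^t$-invariant. Convexity is immediate. Compactness holds because $\mathbb{P}(\mathcal{S}^+_{\phi,H})$ is weakly compact and invariance is a closed condition, by continuity of $\mathbf{P}^t$.

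\emph{Step 4: (4) and (5).} We have $\mathbf{V}^t=\mathbf{V}^0\circ\mathbf{P}^t$, so $(\mathbf{V}^t)_\#\mu=(\mathbf{V}^0)_\#(\mathbf{P}^t)_\#\mu=(\mathbf{V}^0)_\#\mu$, and integrating bounded Borel $f$ gives the identity. For (5), let $g(x)=H(x,\mathbf{p}^\#_{\phi,H}(x))$, which is lower semicontinuous and bounded, hence Borel. Item (4) gives time-independence of $\int g(\gamma(t))\,d\mu$.

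Next, Theorem~\ref{thm:right_derivative} gives $\dot\gamma^+(t)=H_p(\gamma(t),\mathbf{p}^\#(\gamma(t)))$ for every $t$. Hence $\gamma\mapsto L(\gamma(t),\dot\gamma^+(t))$ is a bounded Borel function of $\gamma(t)$ alone, namely $\ell(x)=L(x,H_p(x,\mathbf{p}^\#(x)))$, so its integral is also independent of $t$. The main obstacle is the equality $\int g\,d\mu=-\int\ell\,d\mu$. We integrate the maximal slope identity over $[0,t]$ against $\mu$ and use Fubini, with all integrands bounded. The $\phi$ terms cancel by (4), and the two $t$-independent integrals give $t\bigl(\int\ell\,d\mu+\int g\,d\mu\bigr)=0$. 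Measurability of $\gamma\mapsto\dot\gamma^+(t)$ as a limit of difference quotients justifies these steps.

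Finally, to get $\leqslant c[H]$ we integrate the bound of Step 2 against $\mu$. By invariance, $\int g(\gamma(0))\,d\mu=\int\frac1t\int_0^tg(\gamma(s))\,ds\,d\mu$ for every $t$. Applying Fatou's lemma in reverse, justified by the uniform upper bound on $g$, gives $\int g\,d\mu\leqslant\int\limsup_t\frac1t\int_0^tg\,d\mu\leqslant c[H]$.
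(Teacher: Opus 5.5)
Your proposal is correct and follows the paper's proof essentially step by step: Arzel\`a--Ascoli plus the stability theorem for (1); the maximal slope identity combined with the linear growth of $A_t$ (equivalently, your critical subsolution) for (2); $\mathbf{V}^t=\mathbf{V}^0\circ\mathbf{P}^t$ for (4); Fubini with cancellation of the $\phi$-terms, followed by a reverse Fatou argument, for (5). Where the paper is terse, you fill in details it leaves implicit, namely the Krylov--Bogolyubov construction giving nonemptiness in (3), the Borel measurability and boundedness of $x\mapsto H(x,\mathbf{p}^\#_{\phi,H}(x))$ and $x\mapsto L(x,H_p(x,\mathbf{p}^\#_{\phi,H}(x)))$, and the uniform upper bound that justifies Fatou's lemma.
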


\begin{proof}
The compactness of $(\mathcal{S}^+_{\phi,H},d_+)$ follows from the stability property of the strict singular characteristics (Theorem 3.7 in \cite{CCHW2024}) and Arzel\`a-Ascoli theorem.

To prove (2), we note that for any $\gamma\in\mathcal{S}^+_{\phi,H}$ we have that
\begin{align*}
	&\,\limsup_{t\to\infty}\frac 1t\int^t_0H(\gamma(s),\mathbf{p}_{\phi,H}^\#(\gamma(s))\ ds\\
	=&\,\lim_{t\to\infty}\frac 1t\int^t_0H(\gamma(s),\mathbf{p}_{\phi,H}^\#(\gamma(s))+L(\gamma(s),\dot{\gamma}(s))\ ds\\
	&\qquad\qquad +\limsup_{t\to\infty}\bigg\{-\int^t_0L(\gamma(s),\dot{\gamma}(s))\ ds\bigg\}\\
	=&\,\lim_{t\to\infty}\frac 1t[\phi(\gamma(t))-\phi(\gamma(0))]-\liminf_{t\to\infty}\int^t_0L(\gamma(s),\dot{\gamma}(s))\ ds\\
	=&\,-\liminf_{t\to\infty}\int^t_0L(\gamma(s),\dot{\gamma}(s))\ ds\leqslant-\liminf_{t\to\infty}\frac 1tA_t(\gamma(0),\gamma(t))\\
	=&\,-(-c[H])=c[H].
\end{align*}

(3) is a direct consequence of compactness of $(\mathcal{S}^+_{\phi,H},d_+)$ and the continuity of $\mathbf{P}^t$ for $t\geqslant0$.

For the proof of (4), we note that for any $t\geqslant0$ and $\mu\in\mathcal{IM}(\phi,H)$ we have 
\begin{align*}
	(\mathbf{V}^t)_\#\mu=(\mathbf{V}^0\circ\mathbf{P}^t)_\#\mu=(\mathbf{V}^0)_\#(\mathbf{P}^t)_\#\mu=(\mathbf{V}^0)_\#\mu
\end{align*}
and for any bounded Borel measurable function $f:M\to\R$ we have
\begin{align*}
	\int_{\mathcal{S}^+_{\phi,H}}f(\gamma(t))\ d\mu=\int_Mf\ d(\mathbf{V}^t)_\#\mu=\int_Mf\ d(\mathbf{V}^0)_\#\mu=\int_{\mathcal{S}^+_{\phi,H}}f(\gamma(0))\ d\mu.
\end{align*}

Now we turn to the proof of (5). In view of (4), for any $t\geqslant0$ and $\mu\in\mathcal{IM}(\phi,H)$ we have that
\begin{equation}\label{eq:inv1}
	\int_{\mathcal{S}^+_{\phi,H}}H(\gamma(t),\mathbf{p}_{\phi,H}^\#(\gamma(t)))\ d\mu=\int_{\mathcal{S}^+_{\phi,H}}H(\gamma(0),\mathbf{p}_{\phi,H}^\#(\gamma(0)))\ d\mu
\end{equation}
and
\begin{equation}\label{eq:inv2}
	\begin{split}
		&\,\int_{\mathcal{S}^+_{\phi,H}}L(\gamma(t),\dot{\gamma}^+(t))\ d\mu=\int_{\mathcal{S}^+_{\phi,H}}L(\gamma(t),H_p(\gamma(t),\mathbf{p}_{\phi,H}^\#(\gamma(t))))\ d\mu\\
		=&\,\int_{\mathcal{S}^+_{\phi,H}}L(\gamma(0),H_p(\gamma(t),\mathbf{p}_{\phi,H}^\#(\gamma(0))))\ d\mu=\int_{\mathcal{S}^+_{\phi,H}}L(\gamma(0),\dot{\gamma}^+(0))\ d\mu.
	\end{split}
\end{equation}
Combining \eqref{eq:inv1} and \eqref{eq:inv2} we conclude that
\begin{align*}
	&\,\int_{\mathcal{S}^+_{\phi,H}}H(\gamma(0),\mathbf{p}_{\phi,H}^\#(\gamma(0)))\ d\mu\\
	=&\,\frac 1t\int^t_0\int_{\mathcal{S}^+_{\phi,H}}H(\gamma(s),\mathbf{p}_{\phi,H}^\#(\gamma(s)))\ d\mu ds\qquad(\text{by}\ \eqref{eq:inv1})\\
	=&\,\frac 1t\int_{\mathcal{S}^+_{\phi,H}}\int^t_0H(\gamma(s),\mathbf{p}_{\phi,H}^\#(\gamma(s)))\ ds d\mu \qquad(\text{by Fubini's theorem})\\
	=&\,\frac 1t\int_{\mathcal{S}^+_{\phi,H}}\Big(\phi(\gamma(t))-\phi(\gamma(0))-\int^t_0L(\gamma(s),\dot{\gamma}^+(s))\ ds\Big)\ d\mu\\
	=&\,\frac 1t\int_{\mathcal{S}^+_{\phi,H}}\Big(\phi(\gamma(t))-\phi(\gamma(0))\Big)\ d\mu-\frac 1t\int^t_0\int_{\mathcal{S}^+_{\phi,H}}L(\gamma(s),\dot{\gamma}^+(s))\ d\mu ds\\
	&\qquad\qquad\qquad\qquad\qquad\qquad\qquad \qquad(\text{by Fubini's theorem})\\
	=&\,-\int_{\mathcal{S}^+_{\phi,H}}L(\gamma(0),\dot{\gamma}^+(0))\ d\mu,\qquad(\text{by (4) and \eqref{eq:inv2}})
\end{align*}
Finally, together with \eqref{eq:inv1} and the conclusion in (2) we obtain
\begin{align*}
	&\,\int_{\mathcal{S}^+_{\phi,H}}H(\gamma(0),\mathbf{p}_{\phi,H}^\#(\gamma(0)))\ d\mu\\
	=&\,\lim_{t\to\infty}\frac 1t\int^t_0\int_{\mathcal{S}^+_{\phi,H}}H(\gamma(s),\mathbf{p}_{\phi,H}^\#(\gamma(s)))\ d\mu ds\qquad(\text{by}\ \eqref{eq:inv1})\\
	=&\,\lim_{t\to\infty}\int_{\mathcal{S}^+_{\phi,H}}\bigg(\frac 1t\int^t_0H(\gamma(s),\mathbf{p}_{\phi,H}^\#(\gamma(s)))\ ds\bigg)\ d\mu \qquad(\text{by Fubini's theorem})\\
	\leqslant&\,\int_{\mathcal{S}^+_{\phi,H}}\limsup_{t\to\infty}\bigg(\frac 1t\int^t_0H(\gamma(s),\mathbf{p}_{\phi,H}^\#(\gamma(s)))\ ds\bigg)\ d\mu \qquad(\text{by Fatou's lemma})\\
	\leqslant&\,\int_{\mathcal{S}^+_{\phi,H}}c[H]\ d\mu=c[H],\qquad(\text{by (2))}.
\end{align*}
This completes the proof.
\end{proof}


Recalling the notation $\Gamma$ in Section \ref{subsec:intro3}, we have the following new characterization of Ma\~n\'e's critical value as well as projected Mather sets.

\begin{The}\label{thm:Mather_max}
\hfill
\begin{enumerate}
	\item We have the following characterization of Ma\~n\'e critical value.
	\begin{equation}\label{eq:Mane}
		c[H]=\max_{\phi\in\mathrm{SCL}\,(M)}\sup_{\mu\in\mathcal{IM}(\phi,H)}\int_{\mathcal{S}^+_{\phi,H}}H(\gamma(0),\mathbf{p}_{\phi,H}^\#(\gamma(0)))\ d\mu.
	\end{equation}
	\item If $\phi$ is a weak KAM solution of \eqref{eq:HJ_wk}, $\mu_0\in\mathscr{M}_H$, then $\mu=\Gamma_\#\mu_0\in\mathcal{IM}(\phi,H)$ and $(\mathbf{V}^0)_\#\mu=\mu_0$. Moreover, 
	\begin{equation}\label{eq:Mane1}
		\int_{\mathcal{S}^+_{\phi,H}}H(\gamma(0),\mathbf{p}_{\phi,H}^\#(\gamma(0)))\ d\mu=c[H].
	\end{equation}
	\item If $\phi\in\mathrm{SCL}\,(M)$ and $\mu\in\mathcal{IM}(\phi,H)$ satisfy \eqref{eq:Mane1}, then $\mu_0=(\mathbf{V}^0)_\#\mu\in\mathscr{M}_H$ and $\mu=\Gamma_\#\mu_0$.
\end{enumerate}	
\end{The}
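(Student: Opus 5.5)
The plan is to prove (2) first, then deduce (1) from (2) together with Proposition~\ref{pro:ineq}(5), and finally prove (3) by turning the equality case of the Fenchel--Young and Mañé inequalities into a statement about closed measures. \textbf{Step (2).} Let $\phi$ be a weak KAM solution and $\mu_0\in\mathscr{M}_H$. By Mather theory, for $x\in\mathscr{M}(H)$ the curve $\gamma(x,\cdot)$ is $(\phi,H)$-calibrated on all of $\R$, since the Mather set lies in the Aubry set. Hence $\mathscr{M}(H)\subset\mathcal{I}(\phi)$, so $\phi$ is differentiable along $\gamma(x,\cdot)$ with $D\phi(\gamma(x,t))=L_v(\gamma,\dot\gamma)$. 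Then $D^+\phi=\{D\phi\}$ there, so $\mathbf{p}^\#_{\phi,H}=D\phi$ and $\dot\gamma=H_p(\gamma,D\phi(\gamma))$; thus $\Gamma(\mathscr{M}(H))\subset\mathcal{S}^+_{\phi,H}$. Invariance of $\Gamma_\#\mu_0$ follows from $\mathbf{P}^t\circ\Gamma=\Gamma\circ\pi\circ\Phi^t_L\circ f_L$ and the $\Phi_L^t$-invariance of the lifted Mather measure $\widehat\mu_0=(f_L)_\#\mu_0$. The identity $\mathbf{V}^0\circ\Gamma=\mathrm{id}$ gives $(\mathbf{V}^0)_\#\mu=\mu_0$. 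Finally, along calibrated curves $H(\gamma,D\phi(\gamma))=c[H]$ everywhere, because $\phi$ is a viscosity solution differentiable at $\gamma(t)$, and this gives \eqref{eq:Mane1}. \textbf{Step (1).} Proposition~\ref{pro:ineq}(5) bounds every integral in \eqref{eq:Mane} by $c[H]$, and (2) shows that the bound is attained for a weak KAM $\phi\in\mathrm{SCL}(M)$ with $\mu=\Gamma_\#\mu_0$. So the outer maximum and the inner supremum are both achieved.

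\textbf{Step (3)}, which I expect to be the main obstacle. Given $\mu\in\mathcal{IM}(\phi,H)$ with integral $c[H]$, I will define a measure $\widehat\mu_0$ on $TM$ by $\widehat\mu_0=(\gamma\mapsto(\gamma(0),\dot\gamma^+(0)))_\#\mu$. This map is Borel by Theorem~\ref{thm:right_derivative}, and $\widehat\mu_0$ is compactly supported since solutions are equi-Lipschitz. By Proposition~\ref{pro:ineq}(5), $\int L\,d\widehat\mu_0=-c[H]$.

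Next I will show that $\widehat\mu_0$ is a closed measure. For $f\in C^1(M)$, the invariance in Proposition~\ref{pro:ineq}(4) and Fubini give
\begin{align*}
\int\langle Df,v\rangle\,d\widehat\mu_0=\frac1t\int_0^t\int\langle Df(\gamma(s)),\dot\gamma^+(s)\rangle\,d\mu\,ds=\frac1t\int\bigl(f(\gamma(t))-f(\gamma(0))\bigr)\,d\mu=0 .
\end{align*}
By Mañé's characterization, closed measures satisfy $\int L\geqslant -c[H]$, with equality exactly for Mather measures (Bernard's closed-measure version). Hence $\widehat\mu_0\in\widehat{\mathscr{M}_H}$, so $\mu_0=\pi_\#\widehat\mu_0\in\mathscr{M}_H$.

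It remains to identify $\mu$ with $\Gamma_\#\mu_0$. Since $\widehat\mu_0$ is supported on the Mather set, $\mu$-a.e.\ $\gamma$ satisfies $\gamma(0)\in\mathscr{M}(H)$ and $\dot\gamma^+(0)=V(\gamma(0))$. Applying the same reasoning to $(\mathbf{P}^t)_\#\mu=\mu$ for all rational $t$ shows that $\mu$-a.e.\ $\gamma$ satisfies $(\gamma(t),\dot\gamma^+(t))=f_L(\gamma(t))$ for all rational $t$. Using Theorem~\ref{thm:right_derivative} and the continuity of $V$ on $\mathscr{M}(H)$, this extends to all $t$, for instance by noting that $\gamma(t)\in\mathscr{M}(H)$ for all $t$ by closedness.

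Then $\gamma$ solves the Lipschitz ODE $\dot\gamma=V(\gamma)$ on the Mather set. By Mather's graph theorem, $V$ is Lipschitz on $\mathscr{M}(H)$, so uniqueness gives $\gamma=\Gamma(\gamma(0))$. Therefore $\mu=\Gamma_\#\mu_0$. The delicate points are the Borel measurability of $\gamma\mapsto\dot\gamma^+(0)$ and the uniqueness of solutions along $V$. Should the rational-times argument be fragile, I would instead use the lifted Mather set's invariance under $\Phi_L$ together with the fact that $\dot\gamma$ is a.e.\ determined by $\mathbf{p}^\#$.
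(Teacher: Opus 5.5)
Your proof is correct. For items (1) and (2) it coincides with the paper's. You also justify $\Gamma(\mathscr{M}(H))\subset\mathcal{S}^+_{\phi,H}$ via differentiability of $\phi$ on the projected Aubry set, which the paper only asserts in the introduction.

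For item (3) you take a genuinely different route, since the paper never uses closed measures. The paper uses $(\mathbf{P}^{kt})_\#\mu=\mu$ to write
$\int A_t(\gamma(0),\gamma(t))\,d\mu=\frac1n\sum_{k<n}\int A_t(\gamma(kt),\gamma((k+1)t))\,d\mu$.
It bounds the sum below by $A_{nt}(\gamma(0),\gamma(nt))$ and uses $A_{nt}/(nt)\to-c[H]$. Together with Proposition~\ref{pro:ineq}(5) this gives
$\int\bigl(\int_0^tL(\gamma,\dot\gamma^+)\,ds-A_t(\gamma(0),\gamma(t))\bigr)d\mu\leqslant0$.
Hence $\mu$-a.e.\ $\gamma$ is a minimizer, i.e.\ an Euler--Lagrange orbit, so $\Phi^t_L\circ g_\phi\circ\mathbf{V}^0=g_\phi\circ\mathbf{V}^t$ holds $\mu$-a.e. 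This gives two things at once:
\begin{enumerate}
\item the $\Phi^t_L$-invariance of $\tilde\mu=(g_\phi)_\#\mu_0$, which is exactly your $\widehat\mu_0$; with $\int L\,d\tilde\mu=-c[H]$ this makes it a Mather measure;
\item the identification $\gamma=\Gamma(\gamma(0))$, by uniqueness for the Euler--Lagrange flow.
\end{enumerate}

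Your closed-measure argument is shorter and more conceptual. Closedness of the velocity measure holds for \emph{every} $\mu\in\mathcal{IM}(\phi,H)$, so Ma\~n\'e's inequality $\int L\geqslant-c[H]$ on closed measures even reproves the bound in Proposition~\ref{pro:ineq}(5). The price is two nontrivial external results: Bernard's theorem that minimizing closed measures are invariant, and Mather's Lipschitz graph theorem for the final uniqueness step. The paper's argument is self-contained, needing only superadditivity and the linear growth of $A_t$. It also yields the stronger pathwise fact that $\mu$-a.e.\ trajectory is a global minimizer.

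One small point in your last step. Extending $\dot\gamma^+(t)=V(\gamma(t))$ from rational $t$ to all $t$ would need right-continuity of $\dot\gamma^+$. Theorem~\ref{thm:right_derivative} as stated does not provide this; closedness of $\mathscr{M}(H)$ only gives $\gamma(t)\in\mathscr{M}(H)$ for all $t$, not the velocity. Your fallback is the right fix:
\begin{enumerate}
\item The map $(\gamma,s)\mapsto g_\phi(\gamma(s))$ is jointly Borel.
\item Invariance and Fubini on $\mu\otimes ds$ then give, for $\mu$-a.e.\ $\gamma$, that $g_\phi(\gamma(s))\in\widehat{\mathscr{M}(H)}$ for a.e.\ $s$, i.e.\ $\dot\gamma(s)=V(\gamma(s))$ a.e.
\item Extend $V$ to a Lipschitz vector field on $M$ and apply Cauchy--Lipschitz uniqueness against $\Gamma(\gamma(0))$.
\end{enumerate}
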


\begin{proof}
Let $\mu_0\in\mathscr{M}_H$. Then $\mu=\Gamma_\#\mu_0\in\mathbb{P}(\mathcal{S}^+_{\phi,H})$ and
\begin{align*}
	(\mathbf{V}^0)_\#\mu=(\mathbf{V}^0)_\#\Gamma_\#\mu_0=(\mathbf{V}^0\circ\Gamma)_\#\mu_0=\text{id}_\#\mu_0=\mu_0.
\end{align*}
If $\phi$ is a weak KAM solution of \eqref{eq:HJ_wk}, then
\begin{align*}
	\int_{\mathcal{S}^+_{\phi,H}}H(\gamma(0),\mathbf{p}_{\phi,H}^\#(\gamma(0)))\ d\mu=&\,\int_MH(x,\mathbf{p}^\#_{\phi,H}(x))\ d(\mathbf{V}^0)_\#\mu\\
	=&\,\int_MH(x,D\phi(x))\ d\mu_0=c[H].
\end{align*}
Now we show that $\mu$ is invariant measure of the semi-flow $\{\mathbf{P}^t\}_{t\geqslant0}$. Observe that
\begin{align*}
	\mathbf{P}^t\circ\Gamma(x)=\Gamma(\gamma(x,t))=\Gamma\circ\pi\circ\Phi^t_L\circ f_L(x),\qquad\forall t\geqslant0, \forall x\in\mathscr{M}(H),
\end{align*}
and $\hat{\mu}=(f_L)_\#\mu_0\in\widehat{\mathscr{M}_H}$, $\pi_\#\hat{\mu}=\mu_0$. Therefore 
\begin{align*}
	(\mathbf{P}^t)_\#\mu=&\,(\mathbf{P}^t)_\#\Gamma_\#\mu_0=\Gamma_\#\pi_\#(\Phi^t_L)_\#(f_L)_\#\mu_0\\
	=&\,\Gamma_\#\pi_\#(\Phi^t_L)_\#\hat{\mu}=\Gamma_\#\pi_\#\hat{\mu}=\Gamma_\#\mu_0=\mu,\qquad\forall t\geqslant0.
\end{align*}
Thus $\mu\in\mathcal{IM}(\phi,H)$ and \eqref{eq:Mane} is a consequence together with Proposition \ref{pro:ineq} (v). This completes the proof of (2) and (1).

Given $\phi\in\mathrm{SCL}\,(M)$, $\mu\in\mathcal{IM}(\phi,H)$ satisfy \eqref{eq:Mane1} and $\mu_0=(\mathbf{V}^0)_\#\mu$. Define $g_\phi:M\to TM$, $g_\phi(x)=(x,H_p(x,\mathbf{p}^\#_{\phi,H}(x))$ and let $\tilde{\mu}=(g_\phi)_\#\mu_0=(g_\phi)_\#(\mathbf{V}^0)_\#\mu$. Observe that $\pi_\#\tilde{\mu}=\pi_\#(g_\phi)_\#\mu_0=\mu_0$. To show $\mu_0\in\mathscr{M}_H$ it suffices to show $\tilde{\mu}\in\widehat{\mathscr{M}_H}$.

In view of Proposition \ref{pro:ineq} (v), for any $t>0$,
\begin{align}
	&\,\frac 1t\int_{\mathcal{S}^+_{\phi,H}}\bigg(\int^t_0L(\gamma(s),\dot{\gamma}^+(s))\ ds\bigg)\ d\mu\notag\\
	=&\,\frac 1t\int^t_0\bigg(\int_{\mathcal{S}^+_{\phi,H}}L(\gamma(s),\dot{\gamma}^+(s))\ d\mu\bigg)\ ds\ (\text{by Fubini's theorem)}\notag\\
	=&\,\frac 1t\int^t_0\bigg(-\int_{\mathcal{S}^+_{\phi,H}}H(\gamma(0),\mathbf{p}_{\phi,H}^\#(\gamma(0)))\ d\mu\bigg)\ ds\\
	&\qquad\qquad\qquad\qquad\qquad\qquad\qquad\qquad (\text{by Proposition \ref{pro:ineq} (5))}\notag\\
	=&\,\frac 1t\int^t_0(-c[H])\ ds=-c[H].\label{eq:eq1}
\end{align}
Since $\mu\in\mathcal{IM}(\phi,H)$, then for any $k\in\N$,
\begin{align*}
	&\,\int_{\mathcal{S}^+_{\phi,H}}A_t(\gamma(0),\gamma(t))\ d\mu=\int_{\mathcal{S}^+_{\phi,H}}A_t(\mathbf{V}^0(\gamma),\mathbf{V}^t(\gamma))\ d(\mathbf{P}^{kt})_\#\mu\\
	=&\,\int_{\mathcal{S}^+_{\phi,H}}A_t(\mathbf{V}^0(\mathbf{P}^{kt}(\gamma)),\mathbf{V}^t(\mathbf{P}^{kt}(\gamma)))\ d\mu=\int_{\mathcal{S}^+_{\phi,H}}A_t(\gamma(kt),\gamma((k+1)t))\ d\mu.
\end{align*}
Therefore
\begin{align}
	&\,\frac 1t\int_{\mathcal{S}^+_{\phi,H}}A_t(\gamma(0),\gamma(t))\ d\mu\\
	=&\,\lim_{n\to\infty}\frac 1{nt}\sum^{n-1}_{k=0}\int_{\mathcal{S}^+_{\phi,H}}A_t(\gamma(kt),\gamma((k+1)t))\ d\mu\notag\\
	=&\,\lim_{n\to\infty}\int_{\mathcal{S}^+_{\phi,H}}\bigg(\frac 1{nt}\sum^{n-1}_{k=0}A_t(\gamma(kt),\gamma((k+1)t))\bigg)\ d\mu\\
	\geqslant&\,\liminf_{n\to\infty}\int_{\mathcal{S}^+_{\phi,H}}\frac 1{nt}A_t(\gamma(0),\gamma(nt))\ d\mu\notag\\
	=&\,\int_{\mathcal{S}^+_{\phi,H}}\bigg(\lim_{n\to\infty}\frac 1{nt}A_t(\gamma(0),\gamma(nt))\bigg)\ d\mu\notag\\
	=&\,\int_{\mathcal{S}^+_{\phi,H}}(-c[H])\ d\mu=-c[H].\label{eq:eq2}
\end{align}
The combination of \eqref{eq:eq1} and \eqref{eq:eq2} yields 
\begin{align*}
	\frac 1t\int_{\mathcal{S}^+_{\phi,H}}\bigg(\int^t_0L(\gamma(s),\dot{\gamma}^+(s))\ ds-A_t(\gamma(0),\gamma(t))\bigg)\ d\mu\leqslant0.
\end{align*}
Since $\int^t_0L(\gamma(s),\dot{\gamma}^+(s))\ ds-A_t(\gamma(0),\gamma(t))\geqslant0$ for any $\gamma\in\mathcal{S}^+_{\phi,H}$, we conclude that
\begin{align*}
	\int^t_0L(\gamma(s),\dot{\gamma}^+(s))\ ds=A_t(\gamma(0),\gamma(t)),\qquad\mu-a.e.\ \gamma\in\mathcal{S}^+_{\phi,H}.
\end{align*}
In other words, $\gamma\vert_{[0,t]}$ is a minimizer for $A_t(\gamma(0),\gamma(t))$. This implies that
\begin{equation}\label{eq:inv3}
	\Phi^t_L(\gamma(0),H_p(\gamma(0),\mathbf{p}^\#_{\phi,H}(\gamma(0)))=(\gamma(t),H_p(\gamma(t),\mathbf{p}^\#_{\phi,H}(\gamma(t))).
\end{equation}
i.e., $\Phi^t_L\circ g_\phi\circ\mathbf{V}^0(\gamma)=g_\phi\circ\mathbf{V}^t(\gamma)$. Together with Proposition \ref{pro:ineq} (4), it yields
\begin{align*}
	(\Phi^t_L)_\#\tilde{\mu}=(\Phi^t_L)_\#(g_\phi)_\#(\mathbf{V}^0)_\#\mu=(g_\phi)_\#(\mathbf{V}^t)_\#\mu=(g_\phi)_\#(\mathbf{V}^0)_\#\mu=\tilde{\mu}.
\end{align*}
Therefore $\tilde{\mu}$ is $\Phi^t_L$-invariant. Moreover, due to Proposition \ref{pro:ineq} (5),
\begin{align*}
	\int_{TM}L(x,v)\ d\tilde{\mu}=&\,\int_{TM}L(x,v)\ d(g_\phi)_\#(\mathbf{V}^0)_\#\mu\\
	=&\,\int_ML(x,H_p(x,\mathbf{p}^\#_{\phi,H}(x))\ d(\mathbf{V}^0)_\#\mu\\
	=&\,\int_{\mathcal{S}^+_{\phi,H}}L(\gamma(0),\dot{\gamma}^+(0))\ d\mu\\
	=&\,-\int_{\mathcal{S}^+_{\phi,H}}H(\gamma(0),\mathbf{p}^\#_{\phi,H}(\gamma(0)))\ d\mu=-c[H].
\end{align*}
It follows that $\tilde{\mu}\in\widehat{\mathscr{M}_H}$ and $\mu_0=\pi_\#\tilde{\mu}\in\mathscr{M}_H$. By \eqref{eq:inv3} we see that for $\mu$-a.e. $\gamma\in\mathcal{S}^+_{\phi,H}$ it holds $\gamma=\Gamma\circ\mathbf{V}^0(\gamma)$. Thus, $\mu=\Gamma_\#(\mathbf{V}^0)_\#\mu=\Gamma_\#\mu_0$.
\end{proof}

\begin{Rem}
\hfill
\begin{enumerate}
	\item For general $\phi\in\mathrm{SCL}\,(M)$, the measure $\mu$ satisfying item (3) in Theorem \ref{thm:Mather_max} may not exist.
	\item If there exists $\mu$ making Theorem \ref{thm:Mather_max} (3) hold true, it is unnecessary that $\phi$ is a weak KAM solution of \eqref{eq:HJ_wk} and such a function $\phi$ is not necessary to be differentiable on the support of $\mu_0$. See the following example. 
\end{enumerate}
\end{Rem}

\begin{Ex}
Consider the Hamiltonian $H:T^*\mathbb{S}\to\R$, $H(x,p)=\frac 12|p|^2-\cos x$. In this case $\widehat{\mathscr{M}_H}=\{\delta(\pi,0)\}$ with $\delta(\pi,0)$ the Dirac measure supported on $(\pi,0)$, and $\mathscr{M}_H=\{\delta(\pi)\}$ with $\delta(\pi)$ the Dirac measure supported on $\pi$. We also have $\widehat{\mathscr{M}(H)}=\{(\pi,0)\}$, $\mathscr{M}(H)=\{\pi\}$ and $c[H]=1$. Now, let
\begin{align*}
	\phi(x)=
	\begin{cases}
		-|x-\pi|,&x\in(\frac{\pi}2,\frac 32\pi),\\
		\frac 1{\pi}x^2-\frac 34\pi,&x\in[-\frac{\pi}2,\frac{\pi}2].
	\end{cases}
\end{align*}
Then $\phi\in\mathrm{SCL}\,(\mathbb{S})$ but $\phi$ is not a weak KAM solution of the associated Hamilton-Jacobi equation \eqref{eq:HJ_wk}.

Let $\gamma_\pi(t)=\pi$ for $t\in[0,+\infty)$, then $\gamma_\pi\in\mathcal{S}^+_{\phi,H}$ and $\mathbf{P}^t(\gamma_\pi)=\gamma_\pi$ for all $t\in[0,+\infty)$. Therefore $\delta(\gamma_\pi)$, Dirac measure supported on $\gamma_\pi$, is contained in $\mathcal{IM}(\phi,H)$, and
\begin{align*}
	\int_{\mathcal{S}^+_{\phi,H}}H(\gamma(0),\mathbf{p}^\#_{\phi,H}(\gamma(0)))\ d\delta(\gamma_\pi)=H(\pi,0)=1=c[H].
\end{align*}
Notice that $(\mathbf{V}^0)_\#(\delta(\gamma_\pi))=\delta(\pi)$, but $\phi$ is not differentiable at $x=\pi$.
\end{Ex}

\section{Viewpoint from Lagrangian to Eulerian}
\label{sec:Eulerian}

\subsection{Mass transport aspect of GHGF}
\label{subsec:mass_transport}

To study the mass transport along strict singular characteristics, one cannot apply the standard DiPerna-Lions theory to deduce the continuity equation for which the solution is a curve of probability measures determined by the measures driven by the flow of the vector field, since the collision and focus of the classical characteristics. However, one can use a lifting method  in section \ref{subsec:GHGF}. 

Let $\phi:M\to\R$ be a semiconcave function, $H$ be a Tonelli Hamiltonian, and $(\mathcal{S}^+_{\phi,H},\mathbf{P}^t)$ the semi-flow introduced in section \ref{subsec:GHGF}. Consider the set-valued map $\Gamma:M\rightrightarrows\mathcal{S}^+_{\phi,H}$ defined by
\begin{align*}
	x\mapsto\Gamma(x)=\{\gamma\in\mathcal{S}^+_{\phi,H}: \gamma(0)=x\}.
\end{align*}
The set-valued map $\Gamma$ is nonempty, compact-valued and measurable, and $\Gamma$ admits a measurable selection $\gamma:M\to\mathcal{S}^+_{\phi,H}$, $x\mapsto\gamma(x,\cdot)$	such that $\gamma(x,0)=x$ for all $x\in M$. For any such measurable map $\gamma$ we introduce a map
\begin{align*}
	\Phi_\gamma^t(x)=\gamma(x,t),\qquad t\in[0,T],\ x\in M.
\end{align*}
For any $t\in[0,T]$, $\Phi_{\gamma}^t:M\to M$ is measurable, and $\Phi^0_\gamma=\text{id}$.

\begin{The}[\cite{CCHW2024}]\label{thm:CE}
Suppose $\phi$ is a semiconcave function on $M$, $H:T^*M\to\R$ is a Tonelli Hamiltonian, and $\gamma:M\to\mathcal{S}^+_{\phi,H}$ is a a measurable selection of $\Gamma:M\rightrightarrows\mathcal{S}^+_{\phi,H}$. Then for any $\bar{\mu}\in\mathscr{P}(M)$, the set of Borel probability measures on $M$, the curve $\mu_t=(\Phi^t_\gamma)_{\#}\bar{\mu}$, $t\in[0,T]$ in $\mathscr{P}(M)$ satisfies the continuity equation
\begin{equation}\label{eq:CE}\tag{CE}
	\begin{cases}
		\frac d{dt}\mu+\text{\rm div}(H_p(x,\mathbf{p}^{\#}_{\phi,H}(x))\cdot\mu)=0,\\
		\mu_0=\bar{\mu}.
	\end{cases}
\end{equation}
and we have the following properties:
\begin{enumerate}
	\item $\int_M\phi\mu_T-\int_M\phi\mu_0=\int^T_0\int_M\big\{L(x,H_p(x,\mathbf{p}^\#_{\phi,H}(x)))+H(x,\mathbf{p}^\#_{\phi,H}(x))\big\}\ d\mu_sds$.
	\item For any $t\in[0,T)$, $g\in C^\infty(M)$,
	\begin{align*}
		\frac{d^+}{dt}\int_Mg\ d\mu_t=\int_M\nabla g(x)\cdot H_p(x,\mathbf{p}^\#_{\phi,H}(x))\ d\mu_t.
	\end{align*}
	\item For any $t\in[0,T)$ and $f\in C_c(T^*M,\R)$
	\begin{align*}
		\int_Mf(x,\mathbf{p}^\#_{\phi,H}(x))\ d\mu_t=\lim_{\tau\to t^+}\frac 1{\tau-t}\int^\tau_t\int_Mf(x,\mathbf{p}^\#_{\phi,H}(x))\ d\mu_sds.
	\end{align*}
\end{enumerate}
\end{The}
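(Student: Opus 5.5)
The plan is to work entirely on the lifted semi-flow $(\mathcal{S}^+_{\phi,H},\mathbf{P}^t)$ and let $\mu_t$ inherit everything from a single measure on path space. First I would define $\eta=\gamma_\#\bar{\mu}\in\mathbb{P}(\mathcal{S}^+_{\phi,H})$, using the measurable selection $\gamma:M\to\mathcal{S}^+_{\phi,H}$. With this choice $\mu_t=(\mathbf{V}^t)_\#\eta$, so every integral $\int_M F\,d\mu_t$ becomes $\int F(\gamma(t))\,d\eta(\gamma)$. Each curve in the support of $\eta$ is a strict singular characteristic, so we have the following pathwise facts for every $\gamma\in\mathcal{S}^+_{\phi,H}$:
\begin{itemize}
	\item the energy identity \eqref{eq:msc_H_phi2} with the selection $\mathbf{p}^\#_{\phi,H}$;
	\item the right-derivative identity $\dot\gamma^+(t)=H_p(\gamma(t),\mathbf{p}^\#_{\phi,H}(\gamma(t)))$ for all $t$ (Theorem \ref{thm:right_derivative});
	\item the right-Lebesgue-point property of $s\mapsto H(\gamma(s),\mathbf{p}^\#_{\phi,H}(\gamma(s)))$ (Theorem \ref{thm:right_derivative}(2)).
\end{itemize}

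For (1), I would integrate \eqref{eq:msc_H_phi2} on $[0,T]$ against $\eta$. On the left this gives $\int\phi\,d\mu_T-\int\phi\,d\mu_0$. On the right, Fubini applies because the integrand is bounded: the curves are uniformly Lipschitz and $\mathbf{p}^\#$ is bounded. Since $\dot\gamma=H_p(\gamma,\mathbf{p}^\#(\gamma))$ a.e., the result is $\int_0^T\int_M\{L(x,H_p(x,\mathbf{p}^\#))+H(x,\mathbf{p}^\#)\}\,d\mu_s\,ds$.

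For the continuity equation, take $g\in C^\infty(M)$. For each $\gamma$ the map $s\mapsto g(\gamma(s))$ is Lipschitz with derivative $\nabla g(\gamma(s))\cdot H_p(\gamma(s),\mathbf{p}^\#(\gamma(s)))$ a.e. Integrating in $s$ and then in $\eta$, Fubini gives the weak form
\begin{align*}
\int g\,d\mu_t-\int g\,d\bar\mu=\int_0^t\int_M\nabla g\cdot H_p(x,\mathbf{p}^\#_{\phi,H}(x))\,d\mu_s\,ds,
\end{align*}
which is \eqref{eq:CE} in the distributional sense. For (2), I would form the difference quotient $\frac1h\int (g(\gamma(t+h))-g(\gamma(t)))\,d\eta$. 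Pointwise it tends to $\nabla g(\gamma(t))\cdot\dot\gamma^+(t)$ by Theorem \ref{thm:right_derivative}(1), and it is bounded by $\Lip g$ times the uniform speed bound. Dominated convergence then yields the formula for $\frac{d^+}{dt}\int g\,d\mu_t$ at every $t$, not just a.e.

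For (3), the goal is to show that for each $\gamma$,
\begin{align*}
\frac1h\int_t^{t+h}f(\gamma(s),\mathbf{p}^\#(\gamma(s)))\,ds\to f(\gamma(t),\mathbf{p}^\#(\gamma(t))),
\end{align*}
and then conclude by Fubini and dominated convergence. This pathwise limit is the main obstacle, because $\mathbf{p}^\#$ is only Borel and merely lower semicontinuous in energy. I would first prove the stronger statement that $\mathbf{p}^\#(\gamma(s))\to\mathbf{p}^\#(\gamma(t))$ in measure as $s\downarrow t$. The argument runs as follows:
\begin{itemize}
	\item By upper semicontinuity of $D^+\phi$, every limit point of $\mathbf{p}^\#(\gamma(s))$ lies in $D^+\phi(\gamma(t))$.
	\item By lower semicontinuity, $\liminf H(\gamma(s),\mathbf{p}^\#(\gamma(s)))\ge H(\gamma(t),\mathbf{p}^\#(\gamma(t)))$.
	\item By Theorem \ref{thm:right_derivative}(2), the averages of this energy converge to its value at $t$, so the energy converges in measure.
	\item Any limit point $p$ therefore satisfies $p\in D^+\phi(\gamma(t))$ with $H(\gamma(t),p)=\min_{D^+\phi(\gamma(t))}H$.
	\item Strict convexity of $H$ makes this minimizer unique, so $p=\mathbf{p}^\#(\gamma(t))$.
\end{itemize}
Convergence in measure combined with continuity and boundedness of $f$ then gives the averaged limit. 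Finally, integrating over $\eta$ and applying dominated convergence gives (3).
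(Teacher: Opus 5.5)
Your proposal is correct and follows the route the paper indicates for this cited result. You push $\bar\mu$ forward to the path measure $\gamma_\#\bar\mu$ on $\mathcal{S}^+_{\phi,H}$, so that $\mu_t=(\mathbf{V}^t)_\#(\gamma_\#\bar\mu)$, and then transfer the energy identity \eqref{eq:msc_H_phi2} and the pathwise facts of Theorem~\ref{thm:right_derivative} to $\mu_t$ by Fubini and dominated convergence. Your upgrade in (3), from averaged energy convergence to convergence in measure of $\mathbf{p}^\#_{\phi,H}(\gamma(s))$ as $s\downarrow t$, is also sound: the lower-semicontinuity bound turns convergence of the averages into convergence in measure, and upper semicontinuity of $D^+\phi$ together with strict convexity of $H(x,\cdot)$ identifies the limit as $\mathbf{p}^\#_{\phi,H}(\gamma(t))$.
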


\begin{The}[\cite{CCHW2024}]\label{thm:mass sing}
Under the assumption of Theorem \ref{thm:CE}, if $H$ is a Tonelli Hamiltonian and $\phi$ is a weak KAM solution of \eqref{eq:HJ_wk}, we have that, for all $0\leqslant t_1\leqslant t_2\leqslant T$
\begin{align*}
	\mu_{t_1}(\text{\rm Cut}\,(\phi))\leqslant\mu_{t_2}(\text{\rm Cut}\,(\phi)),\quad \mu_{t_1}(\overline{\text{\rm Sing}\,(\phi)})\leqslant\mu_{t_2}(\overline{\text{\rm Sing}\,(\phi)}).
\end{align*}
\end{The}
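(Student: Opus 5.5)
The plan is to use the lifted semi-flow $(\mathcal{S}^+_{\phi,H},\mathbf{P}^t)$ together with the selection-independent identities from Theorem~\ref{thm:propagation_cut}. Fix a measurable selection $\gamma$ and set $\mu_t=(\Phi^t_\gamma)_\#\bar\mu$. For $0\leqslant t_1\leqslant t_2\leqslant T$ we have, by definition of the push-forward,
\begin{align*}
\mu_{t_1}(\mathrm{Cut}\,(\phi))=\bar\mu\bigl(\{x:\gamma(x,t_1)\in\mathrm{Cut}\,(\phi)\}\bigr),\qquad
\mu_{t_2}(\mathrm{Cut}\,(\phi))=\bar\mu\bigl(\{x:\gamma(x,t_2)\in\mathrm{Cut}\,(\phi)\}\bigr).
\end{align*}
It therefore suffices to prove the set inclusion
\[
\{x:\gamma(x,t_1)\in\mathrm{Cut}\,(\phi)\}\subset\{x:\gamma(x,t_2)\in\mathrm{Cut}\,(\phi)\}.
\]
This inclusion holds pointwise and is independent of $\bar\mu$. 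Measurability of these sets follows from the measurability of $\Phi^t_\gamma$ and the fact that $\mathrm{Cut}\,(\phi)$ is Borel: it is a countable union of the closed sets $\{\tau_\phi\leqslant\cdot\}$, or alternatively $\tau_\phi$ is upper semicontinuous.

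For the pointwise inclusion, fix $x$ and put $\eta=\mathbf{P}^{t_1}(\gamma(x,\cdot))\in\mathcal{S}^+_{\phi,H}$. This is a strict singular characteristic starting at $\eta(0)=\gamma(x,t_1)\in\mathrm{Cut}\,(\phi)$. A strict singular characteristic solves \eqref{eq:SSC_intro}, so in particular it satisfies the inclusion \eqref{eq:GCI} on $[0,+\infty)$, because $\mathbf{p}^\#_{\phi,H}(y)\in D^+\phi(y)$. Theorem~\ref{thm:propagation_cut}(2) then gives $\eta(s)\in\mathrm{Cut}\,(\phi)$ for every $s\geqslant0$. Taking $s=t_2-t_1$ yields $\gamma(x,t_2)\in\mathrm{Cut}\,(\phi)$, which is the first inequality.

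For the second inequality we use the same scheme with $\overline{\mathrm{Sing}\,(\phi)}$ in place of $\mathrm{Cut}\,(\phi)$. The required invariance is that a generalized characteristic starting in $\overline{\mathrm{Sing}\,(\phi)}$ stays in it. This is the step I expect to be the main obstacle, because Theorem~\ref{thm:propagation_cut} only addresses the cut locus. I would handle it by approximation, in two cases.

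\emph{Case 1: $\eta(0)\in\mathrm{Cut}\,(\phi)$.} Then $\eta(s)\in\mathrm{Cut}\,(\phi)\subset\overline{\mathrm{Sing}\,(\phi)}$ for all $s\geqslant0$ by the previous step, and we are done.

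\emph{Case 2: $\eta(0)\in\overline{\mathrm{Sing}\,(\phi)}\setminus\mathrm{Cut}\,(\phi)$.} Then $\tau_\phi(\eta(0))>0$. By Theorem~\ref{thm:propagation_cut}(1), $\eta$ coincides with the calibrated curve $\gamma_{\eta(0)}$ on $[0,\tau_\phi(\eta(0))]$. For such points I would argue that the calibrated curve through $\eta(0)$ is a limit of calibrated curves through nearby singular points. Choose $y_k\in\mathrm{Sing}\,(\phi)$ with $y_k\to\eta(0)$. Apply the propagation result to strict singular characteristics $\eta_k$ from $y_k$, which remain in $\mathrm{Cut}\,(\phi)$ for all time. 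The compactness in Proposition~\ref{pro:ineq}(1) lets us extract a limit in $\mathcal{S}^+_{\phi,H}$ starting at $\eta(0)$. By the uniqueness in Theorem~\ref{thm:propagation_cut}(1), this limit equals $\eta$ up to time $\tau_\phi(\eta(0))$. Closedness of $\overline{\mathrm{Sing}\,(\phi)}$ then gives $\eta(s)\in\overline{\mathrm{Sing}\,(\phi)}$ on that interval. From time $\tau_\phi(\eta(0))$ on, either $\eta$ has reached $\mathrm{Cut}\,(\phi)$, in which case Case~1 applies, or we iterate.

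The delicate point is that uniqueness in Theorem~\ref{thm:propagation_cut}(1) concerns the whole class \eqref{eq:GCI}. We therefore need the limit of the $\eta_k$ to be a solution of \eqref{eq:GCI}; this is supplied by Theorem~\ref{thm:stability}. Once the invariance of $\overline{\mathrm{Sing}\,(\phi)}$ is established, integrating the pointwise inclusion against $\bar\mu$ finishes the proof exactly as in the cut-locus case.
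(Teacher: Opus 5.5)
Your proof is correct and is essentially the route behind the cited result (the survey itself only cites \cite{CCHW2024} and gives no proof). The route is forward invariance of $\mathrm{Cut}(\phi)$ and of $\overline{\mathrm{Sing}(\phi)}$ along every curve of $\mathcal{S}^+_{\phi,H}$: for $\mathrm{Cut}(\phi)$ this comes from Theorem~\ref{thm:propagation_cut}(2), and for $\overline{\mathrm{Sing}(\phi)}$ from compactness of $\mathcal{S}^+_{\phi,H}$ plus uniqueness at non-cut points; the invariance is then transferred to $\mu_t$ through the preimage inclusion under $\Phi^t_\gamma$. Two small touch-ups: the ``iterate'' branch never occurs, because if $\tau_\phi(x_0)<\infty$ then $\gamma_{x_0}(\tau_\phi(x_0))\in\mathrm{Cut}(\phi)$ by maximality of $\tau_\phi$ (concatenating calibrated curves); and Borel measurability should read $\mathrm{Cut}(\phi)=\bigcap_n\{\tau_\phi<1/n\}$, a $G_\delta$ set by upper semicontinuity of $\tau_\phi$, rather than a union of sets $\{\tau_\phi\leqslant a\}$.
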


\subsection{Dynamical cost functional and Random Lax-Oleinik semigroup}

In subsection \ref{subsec:mass_transport} we explained the mass transport nature of the GHGF. We will concentrate on their optimal transport nature and bridge these results and the problem of propagation of singularities in a specific mean field model.

To address singularities and cut locus of functionals on Wasserstein spaces, we follow the framework in \cite{Bonnet_Frankowska2022}. For a metric space \((X, d)\), we denote by \(\mathscr{P}(X)\) the set of all probability Borel measures on \(X\), and by \(\mathscr{P}_c(\mathbb{R}^m)\) the subspace of \(\mathscr{P}(\R^m)\) consisting of measures with compact support.

For any functional \(U: \mathscr{P}_c(\mathbb{R}^m) \to \mathbb{R}\), we introduce \(\partial^{\pm} U(\mu)\), the localized Fréchet superdifferential and subdifferential of \(U\) at \(\mu\). 

\begin{defn}[Localized Fr\'{e}chet generalized differential]\label{def:local frechet}
Assume that $U: \mathscr P_c(\R^m)\to\R$, $\mu\in\mathscr P_c(\R^m)$, we say $\alpha\in L^2(\R^m;\mu)$ belongs to $\partial^+U(\mu)=\partial^+_{\rm loc}U(\mu)$, the set of \textit{localized Fr\'{e}chet superdifferential} of $U$ at $\mu$, if for any $R>0$ and any $\nu\in\mathscr{P}(B_R(\mu))$, 
\begin{equation}\label{eq:wass-supdiff}
	U(\nu)-U(\mu) \leqslant \sup_{\gamma \in \Gamma_2(\mu, \nu)}\left\{\int_{\mathbb{R}^{2m}}\langle\alpha(x), y-x\rangle\,d\gamma\right\}+o_R(W_2(\mu,\nu)),
\end{equation}
where $o_R(W_2(\mu,\nu))$, depending on $R$, represents the high order infinitesimal of $W_2(\mu,\nu)$.

Similarly, $\beta\in L^2(\R^m;\mu)$ belongs to $\partial^-U(\mu)=\partial^-_{\rm loc}U(\mu)$, the set of \textit{localized Fr\'{e}chet subdifferential} of $U$ at $\mu$, if for any $R>0$ and any $\nu\in\mathscr{P}(B_R(\mu))$,
\begin{equation}\label{eq:wass-subdiff}
	U(\nu)-U(\mu) \geqslant \inf_{\gamma \in \Gamma_2(\mu, \nu)}\left\{\int_{\mathbb{R}^{2m}}\langle\beta(x), y-x\rangle\,d\gamma\right\}+o_R(W_2(\mu,\nu)).
\end{equation}

For a time dependent functional $U:\R\times\mathscr P_c(\R^m)\to\R$, we say  $(q,\alpha)\in\R\times L^2(\R^m;\mu)\in \partial^+U(t,\mu)$, if for any $R>0$ and $a,b\in\R$ satisfying $t\in(a,b)$ and $(s,\nu)\in[a,b]\times\mathscr P(B_R(\mu))$,
\begin{align*}
	&\,U(s,\nu)-U(t,\mu)\\
	\leqslant&\,\sup_{\gamma \in \Gamma_2(\mu, \nu)}\left\{\int_{\mathbb{R}^{2m}}\langle\alpha(x), y-x\rangle\,d\gamma\right\}+q(s-t)+o_R(W_2(\mu,\nu)+|t-s|).
\end{align*}The definition of $\partial^-U(t,\mu)$ is similar.
\end{defn}

\begin{defn}[Singularity of functional in $\mathscr P_c(\R^m)$]\label{def:Pc-diff-sing}
Let $U:\mathscr P_c(\R^m)\to\R$ and $\mu\in\mathscr P_c(\R^m)$. 
\begin{enumerate}
	\item $\mu$ is called a \emph{regular point} of $U$ if both $\partial^+ U(\mu)$ and $\partial^-U(\mu)$ are non-empty. We also say $U$ is \emph{locally differentiable} at $\mu$.
	\item $\mu$ is called a \emph{singular point} of $U$ if $U$ is not locally differentiable at $\mu$. We denote by $\mathscr{S}(U)$ the set of all singular points of $U$.
\end{enumerate}
\end{defn}

Now we consider the case that $L$ is a Tonelli Lagrangian and the time-dependent cost function $c^t(x,y)=A_t(x,y)$ is the fundamental solution with respect to $L$. In this context, a connection from optimal transportation to Mather theory and weak KAM theory is firstly studied by Bernard and Buffoni in \cite{Bernard_Buffoni2007a}. 

\begin{defn}[Dynamical cost functional]
The \emph{dynamical cost functional} of $c^{t}(\cdot,\cdot)$ is defined as
\begin{align*}
	C^{t}(\mu,\nu):=\inf_{\gamma\in\Gamma(\mu,\nu)}\int_{M\times M}c^{t}(x,y)\,d\gamma
\end{align*}
We denote by $\Gamma_o^{t}(\mu,\nu)$ the set of minimizers of $C^{t}(\mu,\nu)$. 
\end{defn}

\begin{defn}[Random curves]
For a given probability space $(\Omega,\mathscr F,\mathbb P)$ and $T>0$, An absolutely continuous \emph{random curve} on $[0,T]$ refers to a measurable map $\xi: [0,T]\times\Omega\to M$, which satisfies for any fixed $\omega\in\Omega$, sample path $\xi(\cdot,\omega)$ is absolutely continuous curve on $M$. Moreover, $\xi=\xi(s)$ can also be treated as a random process for $s\in[0,T]$.
\end{defn}

For the setting on work space $\mathscr P_c(\R^m)$, We select the probability measure space $(\Omega,\mathscr F,\mathbb P)$ for the random curves. 
In the subsequent discussion, we usually omit the component $\omega\in\Omega$ of the random curves and denote the expectation of a random variable (or vector) on 
$(\Omega,\mathscr F,\mathbb P)$ by $\mathbb E$ for simplicity when no confusion arises.

\begin{defn}[Dynamical coupling]
We say an absolutely continuous random curve $\xi$ is a \emph{dynamical coupling} of $\mu$ and $\nu$ for $t>0$, if $\text{\rm law}(\xi(0))=\mu$ and $\text{\rm law}(\xi(t))=\nu$. The set of all dynamical couplings of $\mu$ and $\nu$  for $t>0$ is denoted by $L_{\mu,\nu}^t$. 
\end{defn}

\begin{Pro}[\cite{Bernard_Buffoni2007a,Villani_book2009}]\label{pro:exist-displacement}
For any $\nu_1,\nu_2\in\mathscr P(\R^m)$ such that $C^{t}(\nu_1,\nu_2)$ is finite, 
\begin{align*}
	C^t(\nu_1,\nu_2)=\min_{\xi\in L_{\nu_1,\nu_2}^t}\E\left(\int_0^tL(\xi(s),\dot\xi(s))\,ds\right).
\end{align*}
The minimizer of the last term is called dynamical optimal coupling of $\nu_1$ and $\nu_2$ for $t>0$. $\mu_s:=\text{\rm law}(\xi_*(s))$ $(s\in[0,t])$ as the law of a dynamical optimal coupling $\xi_*$ is called a displacement interpolation of $C^t(\nu_1,\nu_2)$. In this case, we have
\begin{itemize}
	\item $\xi_*$ is a random solution of the Euler-Lagrange equation;
	\item the path $\{\mu_s\}_{s\in[0,t]}$ is a minimizing curve for the action functional defined on $\mathscr P(\R^m)$ by
	\begin{align*}
		\mathbb A^t(\{\mu_s\}_{s\in[0,t]}):=\min_{\xi}\E\left(\int_0^tL(\xi(s),\dot\xi(s))\,ds\right),
	\end{align*}where the last minimum is over all random curves $\xi$ such that $\text{\rm law}(\xi(s))=\mu_s$ for $s\in[0,t]$.
\end{itemize}
\end{Pro}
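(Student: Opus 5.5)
The plan is to reduce the existence and representation of minimizers to a superposition principle for Lagrangian costs. This is the scheme of Bernard--Buffoni, and Villani's book, Ch.~7.

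\emph{Step 1: Kantorovich side.} Since $C^t(\nu_1,\nu_2)<\infty$ and $c^t=A_t$ is continuous and bounded below, lower semicontinuity of $\gamma\mapsto\int c^t\,d\gamma$ gives an optimal plan $\gamma_o\in\Gamma^t_o(\nu_1,\nu_2)$. The couplings $\Gamma(\nu_1,\nu_2)$ are tight, so this is a routine compactness argument.

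\emph{Step 2: inequality $\leqslant$.} For any $\xi\in L^t_{\nu_1,\nu_2}$ we have $\int_0^tL(\xi,\dot\xi)\,ds\geqslant A_t(\xi(0),\xi(t))$ pointwise in $\omega$. The law of $(\xi(0),\xi(t))$ lies in $\Gamma(\nu_1,\nu_2)$. Taking expectations therefore gives $\E\int_0^tL\geqslant C^t(\nu_1,\nu_2)$.

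\emph{Step 3: attainment.} For each $(x,y)$ the set of minimizers of $A_t(x,y)$ is nonempty and compact in $C^0([0,t])$ by Tonelli's theorem. The minimizers form a closed graph, since limits of minimizers are minimizers by continuity of $A_t$ and lower semicontinuity of the action. By the Kuratowski--Ryll-Nardzewski theorem there is a Borel selection $(x,y)\mapsto\sigma_{x,y}$ of a minimizer. Take $\Omega=\R^m\times\R^m$ with $\mathbb P=\gamma_o$ and set $\xi_*(s,(x,y))=\sigma_{x,y}(s)$. Then $\xi_*$ is a measurable absolutely continuous random curve with $\text{law}(\xi_*(0))=\nu_1$ and $\text{law}(\xi_*(t))=\nu_2$. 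Moreover $\E\int_0^tL(\xi_*,\dot\xi_*)=\int A_t\,d\gamma_o=C^t(\nu_1,\nu_2)$, so the infimum is a minimum.

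\emph{Step 4: properties of optimal couplings.}
\begin{itemize}
\item Euler--Lagrange. For any optimal $\xi_*$, equality in Step 2 forces $\int_0^tL(\xi_*,\dot\xi_*)=A_t(\xi_*(0),\xi_*(t))$ almost surely. Hence almost every sample path is a Tonelli minimizer, and so a $C^2$ solution of the Euler--Lagrange equation.
\item Action minimality. Let $\{\mu'_s\}$ be any competing path with the same endpoints, and let $\eta$ be a random curve with marginals $\mu'_s$. Then $\eta\in L^t_{\nu_1,\nu_2}$, so $\E\int L(\eta,\dot\eta)\geqslant C^t=\E\int L(\xi_*,\dot\xi_*)\geqslant\mathbb A^t(\{\mu_s\})$. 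This yields $\mathbb A^t(\{\mu_s\})=C^t(\nu_1,\nu_2)\leqslant\mathbb A^t(\{\mu'_s\})$, so $\{\mu_s\}$ minimizes the action.
\item The inner minimum in the definition of $\mathbb A^t$ is attained by the same compactness: tightness in path space follows from superlinearity of $L$, and one uses lower semicontinuity of the action.
\end{itemize}

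\emph{Main obstacle.} I expect the main difficulty to be the measurable-selection and measurability issues in Step 3 and in the attainment of $\mathbb A^t$. One must check that the argmin multifunction is Borel with closed values in a Polish space, and that $\xi_*$ is jointly measurable. The latter follows since $\omega\mapsto\xi_*(\cdot,\omega)\in C^0$ is Borel and evaluation is continuous. On noncompact $\R^m$ one also needs $A_t$ to be bounded below and superlinear to secure tightness. This holds under the standing Tonelli assumptions.
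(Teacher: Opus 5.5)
Your proposal is correct and follows essentially the argument behind the result the paper cites (Villani, Ch.~7; Bernard--Buffoni), which the paper quotes without proof: take an optimal plan $\gamma_o$ for $A_t$, push it forward to path space through a measurable selection $(x,y)\mapsto\sigma_{x,y}$ of action minimizers, and use the equality case of $\E\int_0^tL\geqslant\E A_t(\xi(0),\xi(t))\geqslant C^t(\nu_1,\nu_2)$ to force almost sure minimality, which gives both the Euler--Lagrange property and the action minimality of $\{\mu_s\}$. Beyond the measurability checks you already flag, only one routine remark is needed: if $(\Omega,\mathscr F,\mathbb P)$ is fixed in advance rather than chosen as $\R^m\times\R^m$ with $\mathbb P=\gamma_o$, a standard atomless space carries a random curve with the same law as $(x,y)\mapsto\sigma_{x,y}$ under $\gamma_o$, so the same construction goes through.
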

For more discussions on dynamical optimal coupling and displacement interpolation, readers can refer to Chapter 7 in \cite{Villani_book2009}.

\begin{defn}[Random Lax-Oleinik operators]\label{def:Pt-,Pt+}
For any Borel measurable function $\phi:\R^m\to [-\infty,+\infty]$, $t>0$ and $\mu\in\mathscr P(\R^m)$, we define
\begin{align}
	P_t^+\phi(\mu)&:=\sup_{\nu\in\mathscr P(\R^m)}\{\phi(\nu)-C^t(\mu,\nu)\},\label{eq:def-pt+}\\
	P_t^-\phi(\mu)&:=\inf_{\nu\in\mathscr P(\R^m)}\{\phi(\nu)+C^t(\nu,\mu)\}.\label{eq:def-pt-}
\end{align}
$P_t^+$ and $P_t^-$ are called \emph{positive} and \emph{negative type random Lax-Oleinik operators} respectively.
\end{defn}

The following theorem and corollary show that $\mathscr P_c(\R^m)$ is a suitable working space for the operator $P_t^{\pm}$. 

\begin{Pro}[\cite{CCSW2025}]\label{pro:probability lax-oleinik}
Let $\phi\in \UC(\R^m)$, $t_0>0$ and $c[0]=0$. Random Lax-Oleinik operators have the following properties:
\begin{enumerate}
		\item $\{P_t^-\}_{t>0}$ and $\{P_t^+\}_{t>0}$ are semigroups with respect to $t\in(0,+\infty)$;
		\item $\lim_{t\to 0^+}P_t^-\phi=\lim_{t\to 0^+}P_t^+\phi=\phi$, thus we can define $P_0^-\phi=P_0^+\phi=\phi$;
		\item for arbitrary $t,s\geqslant 0$, $P_t^-\circ P_s^+\phi=T_t^-\circ T_s^+\phi(\cdot)$; $P_t^+\circ P_s^-\phi=T_t^-\circ T_s^-\phi(\cdot)$;
		\item for any $t>0$, $P_t^-\phi$ (resp. $-P_t^+\phi$) is locally (resp. strongly) semiconcave and $\{P_t^-\phi\}_{t\geqslant t_0}$ (resp. $\{-P_t^+\phi\}_{t\geqslant t_0}$) uniformly localized (resp. strongly) semiconcave;
		\item $t\mapsto P_t^-\phi$ ($t\mapsto P_t^+\phi$) uniformly continuous on $[0,+\infty)$; 
		\item suppose $K\subset\R^m$ is compact set. $(t,\mu)\mapsto P_t^-\phi(\mu)$ ($(t,x)\mapsto P_t^+\phi(x)$) is continuous on $[0,+\infty)\times \mathscr P(K)$, locally Lipschitz on $(0,+\infty)\times \mathscr P(K)$ and equi-Lipschitz on $[t_0,+\infty)\times \mathscr P(K)$ with respect to $\phi$.
	\end{enumerate}
\end{Pro}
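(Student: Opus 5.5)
The plan is to derive all six items from the defining formulas \eqref{eq:def-pt+}--\eqref{eq:def-pt-} together with Proposition \ref{pro:exist-displacement}. The main tool is a \emph{disintegration/lifting} argument. On $\mathscr P(\R^m)$ the functional $\phi(\nu)$ is interpreted as the potential energy $\int\phi\,d\nu$. The first step is the pointwise representation
\begin{align*}
	P_t^-\phi(\mu)=\int_{\R^m}T_t^-\phi\,d\mu,\qquad P_t^+\phi(\mu)=\int_{\R^m}T_t^+\phi\,d\mu .
\end{align*}
For ``$\geqslant$'' in the first identity, take any $\nu$ and any coupling $\gamma\in\Gamma(\nu,\mu)$. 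Then
\begin{align*}
	\int\phi\,d\nu+\int c^t\,d\gamma=\int\bigl(\phi(y)+A_t(y,x)\bigr)\,d\gamma\geqslant\int T_t^-\phi\,d\mu .
\end{align*}
For ``$\leqslant$'', choose a Borel measurable selection $x\mapsto y(x)$ of minimizers in the definition of $T^-_t\phi(x)$, which exists by continuity and compactness of $B(x,\lambda t)$. Take $\nu=y_\#\mu$ and the coupling $(y,\mathrm{id})_\#\mu$. The $P_t^+$ case is symmetric.

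Once this representation holds, items (1), (2), (3), (5) and (6) reduce to properties of $T_t^\pm$ on $\R^m$. The semigroup property (1) follows from $T^\pm_{t+s}=T^\pm_t T^\pm_s$ on $\UC(\R^m)$. Alternatively it follows directly from gluing dynamical optimal couplings (Proposition \ref{pro:exist-displacement}) and the triangle-type identity $C^{t+s}=\inf_\nu\{C^t+C^s\}$. Item (2) follows from the uniform convergence $T^\pm_t\phi\to\phi$ for uniformly continuous $\phi$. Integrating this against $\mu$ gives convergence uniformly in $\mu$.

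Item (3) follows by iterating the representation. Since $P_s^+\phi$ is again a potential energy with integrand $T_s^+\phi\in\UC$, applying the $P_t^-$ formula gives $\int T_t^-T_s^+\phi\,d\mu$. The identities are understood in this sense, and the second one is read with the obvious typo corrected.

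Items (5) and (6) use the standard equi-continuity of $(t,x)\mapsto T_t^\pm\phi(x)$, locally Lipschitz for $t>0$ and equi-Lipschitz for $t\geqslant t_0$. One then integrates, using the estimate $|\int f\,d\mu-\int f\,d\nu|\leqslant\Lip(f)\,W_1(\mu,\nu)\leqslant\Lip(f)\,W_2(\mu,\nu)$ for $\mu,\nu$ supported in $K$. Continuity at $t=0$ again uses the uniform continuity from (2).

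The main obstacle is item (4), the semiconcavity in the sense of Definition \ref{def:local frechet}. For $t>0$, $T_t^-\phi$ is locally semiconcave with a constant $C(t)$ that is uniform for $t\geqslant t_0$. The plan is to transfer the pointwise bound
\begin{align*}
	f(y)-f(x)\leqslant\langle D^+f(x),y-x\rangle+\tfrac{C}{2}|y-x|^2
\end{align*}
to measures. Take a measurable selection $\alpha(x)\in D^+T_t^-\phi(x)$ and integrate over an optimal plan $\gamma\in\Gamma_2(\mu,\nu)$. This gives
\begin{align*}
	U(\nu)-U(\mu)\leqslant\int\langle\alpha(x),y-x\rangle\,d\gamma+\tfrac C2W_2^2(\mu,\nu),
\end{align*}
with the constant depending only on the localization radius $R$. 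This yields localized semiconcavity and $\alpha\in\partial^+P^-_t\phi(\mu)$, uniformly for $t\geqslant t_0$. The delicate point is that local semiconcavity on $\R^m$ must be applied on the bounded set $B_R(\operatorname{supp}\mu)$, so the constants are $R$-dependent. This is exactly the reason for the ``localized'' qualifier.

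For $-P_t^+\phi$, the same argument uses the semiconvexity of $T_t^+\phi$, which holds globally. This is the ``strong'' case, in which the constant can be taken independent of $R$ because $T_t^+\phi$ is a supremum of uniformly semiconvex functions $-A_t(x,\cdot)$ near the maximizers.
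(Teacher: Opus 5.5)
The survey does not prove this proposition; it quotes it from \cite{CCSW2025}, so there is no in-text argument to compare with. Judged on its own, your main reduction is correct.

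The identity $P^\pm_t\phi(\mu)=\int_{\R^m}T^\pm_t\phi\,d\mu$ is exactly item (3) at $s=0$. You obtain it by integrating the pointwise bound $\phi(y)+A_t(y,x)\geqslant T^-_t\phi(x)$ against a coupling, and conversely by pushing $\mu$ forward through a Borel selection of minimizers. It does reduce (1), (2), (3), (5), (6) to the corresponding facts for $T^\pm_t$ on $\R^m$. You use those facts as black boxes; the uniformity for $t\geqslant t_0$ is where $c[0]=0$ and the standing structural assumptions on $L$ enter.

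One small correction: for merely uniformly continuous $\phi$ the minimizers need not lie in $B(x,\lambda t)$, since that bound requires $\phi$ Lipschitz. They lie in a ball of radius $R(t)$ fixed by the modulus of continuity of $\phi$ and the superlinearity of $L$. The argmin multifunction then has closed graph and compact values, and that is what gives the Borel selection.

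The weak point is item (4). Your explanation of ``localized'' versus ``strongly'' is inconsistent. You say $T^+_t\phi$ is globally semiconvex while $T^-_t\phi$ is only locally semiconcave. But with $\check L(x,v):=L(x,-v)$ one has $\check A_t(x,y)=A_t(y,x)$, hence $T^+_t\phi=-\check T^-_t(-\phi)$. So any global or local semiconcavity estimate for $T^-_t$ transfers verbatim to $-T^+_t$. For a general Tonelli $L$ on $\R^m$, neither estimate is global without uniform-in-$x$ $C^2$ bounds on $L$. (Also, $T^+_t\phi$ is a supremum over $y$ of $x\mapsto -A_t(x,y)$, not of $-A_t(x,\cdot)$.)

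In the framework of \cite{Bonnet_Frankowska2022} used in this section, ``strong'' refers to the semiconcavity inequality along interpolations $\mu_\lambda=((1-\lambda)\pi^1+\lambda\pi^2)_\#\gamma$ for arbitrary plans $\gamma\in\Gamma(\mu_0,\mu_1)$, with $\int|x-y|^2\,d\gamma$ in place of $W_2^2(\mu_0,\mu_1)$. It is not about the constant being independent of $R$. Integrating only against optimal plans $\gamma\in\Gamma_2(\mu,\nu)$, as you do, does not establish that property.

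The repair fits inside your approach. Integrate the pointwise three-point inequality $\lambda f(y)+(1-\lambda)f(x)-f((1-\lambda)x+\lambda y)\leqslant\frac{C_R}{2}\lambda(1-\lambda)|x-y|^2$ against an arbitrary $\gamma$. Here $f=T^-_t\phi$ or $f=-T^+_t\phi$, and $C_R$ is the constant on a ball containing both supports, uniform for $t\geqslant t_0$. The same integration against arbitrary plans gives the superdifferential inequality with $\alpha(x)\in D^+f(x)$.

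This yields the strong, localized property for both $P^-_t\phi$ and $-P^+_t\phi$. It yields an $R$-independent constant for both exactly when $L$ has uniform bounds. Under either reading of the qualifiers, your representation therefore gives at least what (4) claims, but not for the reason you state.
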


\begin{Pro}[\cite{CCSW2025}]\label{pro:sub-hjs}
The following statements are equivalent.
\begin{enumerate}
	\item $\phi$ is the subsolution to \eqref{eq:HJ_wk}; 
	\item $\phi(\nu)-\phi(\mu)\leqslant C^t(\mu,\nu)+c[0]t$ for all $t>0$ and $\mu,\nu\in\mathscr P_c(\R^m)$;
	\item For any $\mu\in\mathscr P_c(\R^m)$, $[0,+\infty)\ni t\mapsto P_t^-\phi(\mu)+c[0]t$ is non-decreasing;
	\item For any $\mu\in\mathscr P_c(\R^m)$, $[0,+\infty)\ni t\mapsto P_t^+\phi(\mu)-c[0]t$ is non-increasing.
\end{enumerate}
\end{Pro}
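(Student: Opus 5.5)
The plan is to reduce every statement to the classical pointwise characterization of subsolutions from weak KAM theory. Recall that $\phi$ is a (viscosity, equivalently Lipschitz a.e.) subsolution of \eqref{eq:HJ_wk} with right-hand side $c[0]$ if and only if
\begin{equation*}
	\phi(y)-\phi(x)\leqslant A_t(x,y)+c[0]t,\qquad \forall t>0,\ x,y\in\R^m .
\end{equation*}
I would also use two facts: $C^t(\delta_x,\delta_y)=A_t(x,y)$, and $\phi(\mu)=\int\phi\,d\mu$ is linear along couplings. I would prove (1)$\Leftrightarrow$(2) first, and then show that each of (3) and (4) is equivalent to (2) via the semigroup and monotonicity properties of $P^\pm_t$ from Proposition~\ref{pro:probability lax-oleinik}.

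For (1)$\Rightarrow$(2), fix $\mu,\nu\in\mathscr P_c(\R^m)$ and an optimal plan $\gamma\in\Gamma^t_o(\mu,\nu)$. This plan exists because the supports are compact and $A_t$ is continuous. Integrating the pointwise inequality against $\gamma$ gives
\begin{equation*}
	\int\phi\,d\nu-\int\phi\,d\mu=\int\bigl(\phi(y)-\phi(x)\bigr)\,d\gamma\leqslant \int A_t\,d\gamma+c[0]t=C^t(\mu,\nu)+c[0]t .
\end{equation*}
For (2)$\Rightarrow$(1), I would take $\mu=\delta_x$ and $\nu=\delta_y$; this recovers the pointwise inequality, hence $\phi$ is a subsolution.

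For (2)$\Rightarrow$(3), (2) with the roles of the measures exchanged says $\phi(\nu)+C^s(\nu,\mu)\geqslant \phi(\mu)-c[0]s$, so $P^-_s\phi\geqslant\phi-c[0]s$. Two properties of $P^-_t$ are then needed. The first is the semigroup property, Proposition~\ref{pro:probability lax-oleinik}(1). The second is monotonicity in the argument, together with commutation with additive constants; both are immediate from \eqref{eq:def-pt-}. Using them,
\begin{equation*}
	P^-_{t+s}\phi=P^-_t(P^-_s\phi)\geqslant P^-_t(\phi-c[0]s)=P^-_t\phi-c[0]s,
\end{equation*}
which is exactly the monotonicity of $t\mapsto P^-_t\phi(\mu)+c[0]t$.

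For (3)$\Rightarrow$(2), I would compare with the value at $t=0$: by Proposition~\ref{pro:probability lax-oleinik}(2) we have $P^-_0\phi=\phi$, so $P^-_t\phi(\mu)+c[0]t\geqslant\phi(\mu)$. Inserting any compactly supported competitor $\nu$ in the infimum yields (2). The equivalence of (2) and (4) is handled symmetrically with $P^+_t$ and the supremum in \eqref{eq:def-pt+}.

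The step I expect to be delicate is (2)$\Rightarrow$(3) in the generality of \eqref{eq:def-pt-}. There the infimum runs over all of $\mathscr P(\R^m)$, and the intermediate measures produced by the semigroup need not be compactly supported, whereas (2) is only assumed on $\mathscr P_c(\R^m)$. To avoid this, I would route the argument through (1). Having established (2)$\Rightarrow$(1), I would integrate the pointwise inequality against an arbitrary plan of finite cost, using Proposition~\ref{pro:exist-displacement} to guarantee that $C^t$ is attained whenever it is finite. This extends $P^-_s\phi\geqslant\phi-c[0]s$ to all $\nu\in\mathscr P(\R^m)$ with finite cost; competitors with infinite cost do not affect the infimum. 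A further check is that $\phi\in\UC(\R^m)$ grows at most linearly, so that $\int\phi\,d\nu$ is well defined whenever $C^t(\nu,\mu)<\infty$, by the superlinearity of $L$.
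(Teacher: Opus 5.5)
Your proof is correct. The survey gives no proof of this proposition; it is quoted from \cite{CCSW2025}, so there is no in-paper argument to match. The useful comparison is with a shortcut that the quoted Proposition~\ref{pro:probability lax-oleinik} makes available.

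You work at the level of the cost $C^t$ and push (2)$\Rightarrow$(3),(4) through the semigroup law and the order preservation of $P^\pm_t$. That forces the bookkeeping you flag: the outer infimum in $P^-_t(P^-_s\phi)$ ranges over measures that need not be compactly supported. Your fix through (1), restricted to competitors of finite cost, is sound: when $\mu\in\mathscr P_c(\R^m)$, superlinearity of $L$ gives such competitors a finite first moment. You do not even need attainment of $C^t$ via Proposition~\ref{pro:exist-displacement} there, since the inequality passes directly to the infimum over couplings.

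Alternatively, item (3) of Proposition~\ref{pro:probability lax-oleinik} with $s=0$, respectively $t=0$, gives the representation $P^-_t\phi(\mu)=\int T^-_t\phi\,d\mu$ and $P^+_t\phi(\mu)=\int T^+_t\phi\,d\mu$. With it:
\begin{itemize}
\item (1)$\Rightarrow$(3),(4) is just the classical monotonicity of $t\mapsto T^-_t\phi(x)+c[0]t$ and $t\mapsto T^+_t\phi(x)-c[0]t$ for a dominated function, integrated against $\mu$.
\item The converses follow, exactly as in your argument, by testing with Dirac masses, since $T^-_t\phi+c[0]t\geqslant\phi$ and $T^+_t\phi-c[0]t\leqslant\phi$ are literally the domination inequality.
\end{itemize}

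That route makes the non-compactly supported intermediate measures disappear. The infimum over $\mathscr P(\R^m)$ has already been evaluated by a measurable selection of minimizers, which produces compactly supported competitors. Your route avoids that selection argument and uses only the semigroup law, the monotonicity of $P^\pm_t$, and the linearity of $\phi(\cdot)$ along couplings.
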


\subsection{Hamilton-Jacobi equations in $\mathscr P_c(\R^m)$}

Let \(\phi \in C(\mathbb{R}^m)\) be a function \((\kappa_1, \kappa_2)\)-Lipschitz in the large. As the value function of the Bolza problem, \(u(t, x) := T_t^- \phi(x)\) is the viscosity solution to equation \eqref{eq:HJe} with the initial condition \(\phi\). A natural question arises: whether \(U(t, \mu) := P_t^- \phi(\mu)\) will be the viscosity solution to some corresponding form of the Hamilton-Jacobi equations.

Assume that $H\in C(\R^{m}\times\R^{m})$ is a Tonelli Hamiltonian. We consider the following Hamilton-Jacobi equation: 
\begin{align}
	\partial_tU(t,\mu)+\int_{\R^m}H(x,\partial_\mu U(t,\mu)(x))\,d\mu =0,\tag{PHJ$_{e}$}\label{phje}
\end{align}
where $(t,\mu)\in\R^+\times\mathscr P_c(\R^m)$.

\begin{defn}[Viscosity solution of \eqref{phje}]\label{defn:viscosity}
We call the functional $U:\R^+\times\mathscr P_c(\R^m)$ a \textit{viscosity subsolution} of \eqref{phje}, if for every $(t,\mu)\in\R^+\times\mathscr P_c(\R^m)$, 
\begin{align}
	q+\int_{\R^m}H(x,\alpha(x))\,d\mu\leqslant0,\,\,\,\,\,\,\,\,\forall (q,\alpha)\in\partial^+U(t,\mu);\label{eq:phje-sub}
\end{align}
Similarly, $U:\R^+\times\mathscr P_c(\R^m)$ is called a \textit{viscosity supersolution} of \eqref{phje}, if for every $(t,\mu)\in\R^+\times\mathscr P_c(\R^m)$,
\begin{align}
	p+\int_{\R^m}H(x,\beta(x))\,d\mu\geqslant0,\,\,\,\,\,\,\,\,\forall (p,\beta)\in\partial^-U(t,\mu).\label{eq:phje-sup}
\end{align}
If $U$ is both a subsolution and supersolution to \eqref{phje}, we say $U$ is a \emph{viscosity solution} of \eqref{phje}. 
\end{defn}

In a more general and abstract frame, Gangbo, Nguyen and Tudorascu introduced concept of Hamilton-Jacobi equations in Wasserstein spaces in \cite{Gangbo_Nguyen_Tudorascu2008}, which is further studied in \cite{Gangbo_Tudorascu2019} later by Gangbo and Tudorascu. Although, \eqref{phje} can be regarded as a special case of their work, it is closely related to main contents of this paper.

\begin{defn}[\cite{Ambrosio_GigliNicola_Savare_book2008}, potential energy]\label{def:potential energy}
Given a Borel measurable function $\phi:\R^m\to[-\infty,+\infty]$. A functional of $\mathscr P(\R^m)$ induced by $\phi$
\begin{align*}
	\Phi(\mu):=\int_{\R^m}\phi(x)\,d\mu\qquad (\mu\in\mathscr P(\R^m))
\end{align*}
is called \textit{potential energy functional of} $\phi$. We denote $\Phi(\cdot)$ by $\phi(\cdot)$ for brevity if there is no confusion. 
\end{defn}

\begin{The}[\cite{CCSW2025}]\label{thm:phje-cauchy}
Let $\phi$ be lower semicontinuous and $(\kappa_1,\kappa_2)$-Lipschitz in the large. Then $U(t,\mu):=P_t^-\phi(\mu)$ is a viscosity solution of the following Cauchy problem of \eqref{phje}
\begin{align*}
	\left\{
		\begin{array}{ll}
			\partial_tU(t,\mu)+\int_{\R^m}H\left(x,\partial_\mu U(t,\mu)\right)\,d\mu=0,&(t,\mu)\in\R^+\times\mathscr P_c(\R^m),\\
			U(0,\mu)=\phi(\mu),& \mu\in \mathscr P_c(\R^m).
		\end{array}
	\right.
\end{align*}
\end{The}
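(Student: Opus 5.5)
The plan is to prove the two defining inequalities of Definition~\ref{defn:viscosity} separately, using the dynamic programming structure of $P^-_t$ and Proposition~\ref{pro:exist-displacement}.

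\emph{Dynamic programming principle.} By the semigroup property in Proposition~\ref{pro:probability lax-oleinik}, for $0<s<t$,
\begin{align*}
U(t,\mu)=\inf_{\nu}\{U(t-s,\nu)+C^s(\nu,\mu)\}.
\end{align*}
The initial condition $U(0,\cdot)=\phi$ is item (2) of the same proposition. Continuity of $U$ on $[0,+\infty)\times\mathscr P(K)$ is item (6).

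\emph{Subsolution.} Fix $(t,\mu)$ and $(q,\alpha)\in\partial^+U(t,\mu)$. Since $\alpha\in L^2(\mu)$, I first approximate it in $L^2(\mu)$ by continuous bounded fields. For a fixed $v\in C_b(\R^m,\R^m)$, let $X_s$ be the flow of $\dot x=v(x)$ run backward from time $t$, and set $\nu_s=(X_{-s})_\#\mu$ for $s\in(0,\varepsilon)$.

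\begin{itemize}
\item The dynamic programming principle gives $U(t,\mu)\le U(t-s,\nu_s)+\int_0^s\int L(\cdot,v)\,d\mu\,ds+o(s)$. The action bound is obtained by using the flow as a dynamical coupling.
\item The superdifferential inequality, applied with the coupling induced by the map $X_{-s}$, gives $U(t-s,\nu_s)-U(t,\mu)\le -qs-s\int\langle\alpha,v\rangle d\mu+o(s)$.
\item The $\sup$ over couplings in \eqref{eq:wass-supdiff} is a nuisance here. Since $\nu_s$ is close to $\mu$ and $W_2(\mu,\nu_s)=O(s)$, all couplings differ from the flow coupling only at order $o(s)$ after testing against $\alpha$ (approximated by a continuous field). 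I would justify this step carefully.
\end{itemize}

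Combining the three estimates yields $q+\int(\langle\alpha,v\rangle-L(x,v))\,d\mu\le0$. Taking the supremum over $v$, which approximates the pointwise choice $v=H_p(x,\alpha(x))$, gives $q+\int H(x,\alpha)\,d\mu\le0$ by Fenchel duality.

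\emph{Supersolution.} Take $(p,\beta)\in\partial^-U(t,\mu)$. By Proposition~\ref{pro:exist-displacement} and the dynamic programming principle, there is $\nu$ attaining the infimum, together with a dynamical optimal coupling $\xi$ on $[0,t]$ with law$(\xi(t))=\mu$. Set $\mu_{t-s}=\text{law}(\xi(t-s))$. Optimality gives the exact identity
\begin{align*}
U(t,\mu)=U(t-s,\mu_{t-s})+\E\int_{t-s}^tL(\xi,\dot\xi)\,d\tau.
\end{align*}
The subdifferential inequality along the coupling $(\xi(t),\xi(t-s))$ gives
\begin{align*}
U(t-s,\mu_{t-s})-U(t,\mu)\ge -ps-s\,\E\langle\beta(\xi(t)),\dot\xi(t)\rangle+o(s).
\end{align*}
This uses that $\xi$ solves the Euler--Lagrange equation, so it is $C^1$ with $\dot\xi(t-s)=\dot\xi(t)+O(s)$ uniformly, by compact support. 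The $\inf$ over couplings is handled by the same approximation as in the subsolution step.

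Dividing by $s$ and using Fenchel--Young then gives
\begin{align*}
0\ge -p-\E\bigl[\langle\beta,\dot\xi\rangle-L\bigr]\ge -p-\int H(x,\beta)\,d\mu,
\end{align*}
that is, $p+\int H(x,\beta)\,d\mu\ge0$.

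\emph{Main obstacle.} The hard step is making the one-sided difference quotients rigorous when $\alpha,\beta$ are only in $L^2(\mu)$ and the inequalities in \eqref{eq:wass-supdiff} and \eqref{eq:wass-subdiff} are taken over all optimal couplings. Two tools are needed:
\begin{itemize}
\item the localization in $\mathscr P(B_R(\mu))$, to control the $o_R$ terms;
\item the Lipschitz-in-the-large assumption on $\phi$, to guarantee existence of minimizers with compactly supported velocities.
\end{itemize}
With these, the $W_2$-perturbations stay inside a fixed ball, and the pairing with $\alpha$ can be passed to the limit along the specific coupling.
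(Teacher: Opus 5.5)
The survey does not prove this theorem (it is quoted from \cite{CCSW2025}), so I compare your argument with the statement and Definition~\ref{def:local frechet} as written. The step that breaks is the one you called a nuisance: replacing the $\sup$ (resp.\ $\inf$) over the $W_2$-optimal plans $\Gamma_2(\mu,\nu)$ by the pairing along the flow coupling $(\mathrm{id},X_{-s})_\#\mu$ (resp.\ along $(\xi(t),\xi(t-s))$). Optimal plans do not agree with these couplings to first order. They only see the part of a displacement lying in $\mathrm{Tan}_\mu=\overline{\{\nabla\varphi:\varphi\in C^\infty_c\}}^{L^2(\mu)}$.

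Take $\mu$ the uniform probability on the unit circle of $\R^2$, $J$ the rotation by $\pi/2$, and $v=Jx$ near the circle. Then $\nu_s=\mu$ and the only optimal plan is the diagonal, with pairing $0$. The flow coupling instead gives $-s\int\langle\alpha,Jx\rangle\,d\mu+o(s)$. Likewise $(\xi(t),\xi(t-s))$ is $A_s$-optimal but not $W_2$-optimal, since its displacement $-s\,H_p(x,\cdot)$ is not a gradient field in general. Moreover, in \eqref{eq:wass-subdiff} a lower bound by the pairing along one particular coupling would require that coupling to realize the $\inf$.

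Localization or the Lipschitz-in-the-large assumption cannot repair this. With Definition~\ref{def:local frechet} read literally, the inequalities you want are false for non-tangent test fields, so the statement itself fails and no handling of couplings rescues the argument.

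\emph{First variation on the circle.} For any $\gamma\in\Gamma_2(\mu,\nu)$, the plans $(R_\theta\circ\pi^1,\pi^2)_\#\gamma$ are admissible because $(R_\theta)_\#\mu=\mu$. Minimality at $\theta=0$ then gives $\int\langle Jx,y-x\rangle\,d\gamma=0$.

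\emph{Subsolution fails.} Take $H=\frac12|p|^2$ and $\phi(x)=\langle c,x\rangle$. Then $U(t,\mu)=\int T^-_t\phi\,d\mu=\langle c,\int x\,d\mu\rangle-\frac t2|c|^2$, and $U(s,\nu)-U(t,\mu)=\int\langle c,y-x\rangle\,d\gamma-\frac{s-t}{2}|c|^2$ for every coupling. Hence $(q,\alpha)=(-\frac12|c|^2,\,c+\lambda Jx)\in\partial^+U(t,\mu)$ for every $\lambda\in\R$, with zero remainder. Yet $q+\int H(x,\alpha)\,d\mu=\frac{\lambda^2}{2}>0$.

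\emph{Supersolution fails.} Take $H=\frac12a(x)|p|^2$, linear $\phi$ and small $t$, so that $u=T^-_t\phi$ is $C^2$. Then $\beta=D_xu+\lambda Jx\in\partial^-U(t,\mu)$ with $p=-\int H(x,D_xu)\,d\mu$. Setting $\kappa=\int a\langle D_xu,Jx\rangle\,d\mu$, one gets $p+\int H(x,\beta)\,d\mu=\lambda\kappa+\frac{\lambda^2}{2}\int a\,d\mu$. This is negative for suitable $\lambda$ once $a$ is chosen with $\kappa\neq0$.

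\emph{What a correct proof needs.} It has to restrict the test fields to $\mathrm{Tan}_\mu$, or project them onto it. It should test along perturbations whose optimal plans are known, e.g.\ $(\mathrm{id}+h\nabla\varphi)_\#\mu$, which have a unique optimal plan, rather than along arbitrary flows or the dynamical coupling. It also cannot close the Fenchel step with $v\approx H_p(x,\alpha)$, which need not be tangent even when $\alpha$ is. A more promising route is the representation $U(t,\mu)=\int T^-_t\phi\,d\mu$ (Proposition~\ref{pro:probability lax-oleinik}(3) with $s=0$), together with the local semiconcavity of $T^-_t\phi$. This converts gradient tests into pointwise information on $D^+T^-_t\phi$ at $\mu$-a.e.\ point.
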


Unlike the fundamental solution \( A_t(x,y) \), the dynamical cost functional \( C^t(\mu, \nu) \) can only possibly be a (strict) viscosity subsolution in general. This is because it can be regarded as a convex combination of \( A_t(x,y) \) based on measures \( \mu \) and \( \nu \). However, under the action of the random Lax-Oleinik operators, which corresponds to the convolution with the potential energy functional, it becomes a viscosity solution.

\begin{Rem}
If $\phi$ is upper semicontinuous and $(\kappa_1,\kappa_2)$-Lipschitz in the large, $V(t,\mu):=P_t^+\phi(\mu)$ is the viscosity solution to the following Cauchy problem
\begin{align*}
	\left\{
		\begin{array}{ll}
			\partial_tV(t,\mu)-\int_{\R^m}H\left(x,\partial_\mu V(t,\mu)\right)\,d\mu=0,&(t,\mu)\in\R^+\times\mathscr P_c(\R^m),\\
			V(0,\mu)=\phi(\mu),& \mu\in \mathscr P_c(\R^m).
		\end{array}
	\right.
\end{align*}	
\end{Rem}

\begin{Pro}[\cite{CCSW2025}]\label{pro:phjs}
The following statements are equivalent.
\begin{enumerate}
	\item $u$ is a weak KAM solution (viscosity solution) of \eqref{eq:HJ_wk} on $\T^m$;
	\item For any $t\geqslant 0$, $T_t^-u+c[0]t=u$;
	\item For any $t\geqslant 0$, $P_t^-u+c[0]t=u(\cdot)$;
	\item For any $t\geqslant 0$, $P_t^-\circ P_t^+u=u(\cdot)$.
\end{enumerate}	In this case, $u(\cdot)$ is the viscosity solution of the corresponding stationary type of Hamilton-Jacobi equation 
\begin{align}\tag{PHJ$_{s}$}\label{phjs}
	\int_{\T^m}H(x,\partial_\mu u(\mu)(x))\,d\mu=c[0],\,\,\,\,\mu\in\mathscr P(\T^m).
\end{align}
\end{Pro}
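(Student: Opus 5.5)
We prove the four equivalences by closing the chain (1)$\Leftrightarrow$(2)$\Rightarrow$(3)$\Rightarrow$(2) and then (2)$\Leftrightarrow$(4), working throughout on the compact manifold $\T^m$.

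\emph{(1)$\Leftrightarrow$(2).} This is the classical weak KAM characterization and is taken as the definition of weak KAM solution given in Section~\ref{sec:generalized}. Its equivalence with being a viscosity solution of \eqref{eq:HJ_wk} is Fathi's theorem (\cite{Fathi_book}).

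\emph{(2)$\Rightarrow$(3).} Fix $\mu$. For any dynamical coupling $\xi\in L^t_{\nu,\mu}$ we have
\begin{align*}
\E\Bigl(u(\xi(0))+\int_0^tL(\xi,\dot\xi)\,ds\Bigr)\geqslant \E\,T^-_tu(\xi(t))=\int T^-_tu\,d\mu .
\end{align*}
Taking the infimum over $\nu$ and over $\xi$, and using Proposition~\ref{pro:exist-displacement}, gives $P^-_tu(\mu)\geqslant\int T^-_tu\,d\mu$.

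For the reverse inequality, select a measurable minimizer $y(x)$ of $y\mapsto u(y)+A_t(y,x)$. Such a selection exists by compactness and lower semicontinuity, via a measurable selection theorem. Set $\nu=y_\#\mu$ and take the coupling $(y\times\mathrm{id})_\#\mu$. Then
\begin{align*}
C^t(\nu,\mu)\leqslant\int A_t(y(x),x)\,d\mu ,
\end{align*}
and hence $P^-_tu(\mu)\leqslant\int T^-_tu\,d\mu$. This establishes the general identity
\begin{align*}
P^-_tu(\mu)=\int T^-_tu\,d\mu ,
\end{align*}
which is also the content of Proposition~\ref{pro:probability lax-oleinik}(3) in the degenerate case $s=0$. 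Under (2) it yields $P^-_tu+c[0]t=u(\cdot)$, i.e. (3).

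\emph{(3)$\Rightarrow$(2).} Apply (3) with $\mu=\delta_x$ and use the identity above. This gives $T^-_tu(x)+c[0]t=u(x)$ for every $x$.

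\emph{(2)$\Leftrightarrow$(4).} By Proposition~\ref{pro:probability lax-oleinik}(3), $P^-_t\circ P^+_tu=T^-_t\circ T^+_tu(\cdot)$. Statement (4) is therefore equivalent to $T^-_t\circ T^+_tu=u$ for all $t\geqslant0$. By the characterization recalled in Section~\ref{subsec:intrinsic} (\cite{Cannarsa_Cheng_Hong2025}), for semiconcave $u$ this holds if and only if $u$ is a weak KAM solution.

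Two points need care here. First, this step requires a priori semiconcavity of $u$. For (4)$\Rightarrow$(2) it comes from $u=T^-_t(T^+_tu)$, which is semiconcave for $t>0$. Second, the normalization $c[0]=0$ must be handled consistently.

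\emph{Final claim.} Let $u$ satisfy (1)--(4). To show that $u(\cdot)$ solves \eqref{phjs} in the sense of Definition~\ref{defn:viscosity}, use the time-independent analogue of Theorem~\ref{thm:phje-cauchy}. By (3), $U(t,\mu)=P^-_tu(\mu)=u(\mu)-c[0]t$ is a viscosity solution of \eqref{phje}. Its superdifferential and subdifferential are exactly $\{-c[0]\}\times\partial^\pm u(\mu)$. The sub- and supersolution inequalities of \eqref{phje} then reduce to
\begin{align*}
\int H(x,\alpha)\,d\mu\leqslant c[0]\quad\text{for }\alpha\in\partial^+u(\mu),\qquad
\int H(x,\beta)\,d\mu\geqslant c[0]\quad\text{for }\beta\in\partial^-u(\mu),
\end{align*}
which is precisely \eqref{phjs} in the viscosity sense.

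\emph{Main obstacle.} The delicate step is this last transfer. One must check that the localized Fr\'echet differentials of $(t,\mu)\mapsto u(\mu)-c[0]t$ split into a time part and a measure part. This is straightforward because the dependence on $t$ is affine and the $o_R$ remainders in Definition~\ref{def:local frechet} add. One must also verify that the proof of Theorem~\ref{thm:phje-cauchy} applies on $\T^m$ rather than $\R^m$; I expect it to carry over verbatim, since compactness of $\T^m$ only simplifies it.
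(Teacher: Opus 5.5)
Your argument is correct. The paper gives no proof of its own here (it only cites \cite{CCSW2025}), so the right comparison is with the ingredients it records around the statement: the potential-energy identity $P^-_t\circ P^+_s\phi=T^-_t\circ T^+_s\phi(\cdot)$, the fact stated just before Corollary~\ref{cor:wkam-fixed}, the commutator characterization of \cite{Cannarsa_Cheng_Hong2025} recalled in Section~\ref{subsec:intrinsic}, and Theorem~\ref{thm:phje-cauchy}. That is exactly the chain you use.

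The one step that rests on something not in the paper is the $\T^m$ version of Theorem~\ref{thm:phje-cauchy}, since the paper defines $\partial^\pm$ only on $\mathscr P_c(\R^m)$. This is most cleanly handled by applying the $\R^m$ statement to the $\mathbb{Z}^m$-periodic lift of $u$, because $T^-_t$ commutes with lifting. By contrast, the splitting of $\partial^\pm U$ you single out is harmless: only the trivial inclusion $\{-c[0]\}\times\partial^\pm u(\mu)\subset\partial^\pm U(t,\mu)$ is used.
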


A simple fact is: if $u$ satisfies $u=T_t^-u+c[0]t$ for some $t\geqslant 0$, then $u=T_t^-\circ T_t^+u$ for such $t$. As a direct consequence of this fact, we have the following corollary.

\begin{Cor}[\cite{CCSW2025}]\label{cor:wkam-fixed}
Assume that $u\in\Lip(\R^m)\cap\SCL(\R^m)$ is a viscosity solution of \eqref{eq:HJ_wk}. Then for such $u$, we can also get $T_t^-\circ T_t^+u=u$ for arbitrary $t\geqslant 0$. As a direct consequence,
\begin{align*}
	P_t^-u+c[0]t=u(\cdot),\,\,P_t^-\circ P_t^+u=u(\cdot),\,\,\forall t\geqslant 0.
\end{align*}
\end{Cor}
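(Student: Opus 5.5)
The plan is to reduce both claims to the preceding fact quoted just before the statement, that $u=T^-_t u+c[0]t$ implies $u=T^-_t\circ T^+_t u$, and then transfer them to the measure level via Proposition \ref{pro:phjs} and Proposition \ref{pro:probability lax-oleinik}. The first step is to note that a semiconcave, Lipschitz viscosity solution $u$ of \eqref{eq:HJ_wk} is a weak KAM solution. By the equivalence (1)$\Leftrightarrow$(2) of Proposition \ref{pro:phjs}, or by the standard representation of the Lipschitz viscosity solution as a fixed point of the Lax--Oleinik semigroup up to the critical constant, this gives $T^-_tu+c[0]t=u$ for every $t\geqslant0$.

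Next I fix $t\geqslant0$ and prove the "simple fact" directly. Subsolution gives $u(y)-u(x)\leqslant A_t(x,y)+c[0]t$, so $T^+_tu(x)=\sup_y\{u(y)-A_t(x,y)\}\leqslant u(x)+c[0]t$. Since $T^-_t$ is monotone and commutes with constants, $T^-_tT^+_tu\leqslant T^-_tu+c[0]t=u$. For the reverse inequality, the definition gives $T^+_tu(y)\geqslant u(x)-A_t(y,x)$, hence $T^+_tu(y)+A_t(y,x)\geqslant u(x)$ for all $y$. Taking the infimum over $y$ yields $T^-_tT^+_tu(x)\geqslant u(x)$. Together these give $T^-_t\circ T^+_tu=u$ for all $t$.

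For the measure statements I invoke Proposition \ref{pro:probability lax-oleinik}(3) with $s=t$, which gives $P^-_t\circ P^+_tu=T^-_t\circ T^+_tu(\cdot)=u(\cdot)$ as potential energy functionals. For $P^-_tu+c[0]t=u(\cdot)$, I use the implication (1)$\Rightarrow$(3) of Proposition \ref{pro:phjs}. If a direct argument is wanted instead: $u(\nu)-u(\mu)\leqslant C^t(\mu,\nu)+c[0]t$ follows by integrating the pointwise subsolution inequality against an optimal plan, which gives $P^-_tu\geqslant u-c[0]t$. Equality is attained by pushing $\mu$ backward along a measurable selection of calibrated curves, since the pointwise infimum in $T^-_t$ is attained.

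The main obstacle is that the statement is posed on $\R^m$, while Proposition \ref{pro:phjs} and weak KAM theory are formulated on compact $M$ or $\T^m$. The Lipschitz hypothesis is what restores the needed compactness: minimizers $y$ in $T^-_t$ stay in a ball of radius proportional to $t$, so attainment of infima and measurable selection still go through.
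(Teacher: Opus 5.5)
Your proposal is correct and follows the paper's own route. The weak KAM identity $T^-_tu+c[0]t=u$ gives the ``simple fact'' $T^-_t\circ T^+_tu=u$, which you prove correctly via $T^+_tu\leqslant u+c[0]t$ and the general inequality $T^-_t\circ T^+_t\phi\geqslant\phi$; the measure-level identities then come from item (3) of the random Lax--Oleinik proposition and Proposition~\ref{pro:phjs}, or from your direct coupling argument, which amounts to $P^-_tu=T^-_tu(\cdot)$. As in the paper, the step that a Lipschitz viscosity solution on $\R^m$ is a fixed point of $T^-_t+c[0]t$ is imported from weak KAM theory on non-compact manifolds (existence of backward calibrated curves), and your compactness remark only justifies attainment and measurable selection, not that step.
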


\subsection{Propagation of singularities}

\begin{defn}[Calibrated curve]\label{def:u-measure-cali}
An absolutely continuous curve\footnote{For the definition on absolutely continuous curves of metric space, readers can see \cite[Definition 1.1.1]{Ambrosio_GigliNicola_Savare_book2008}.} $\{\mu_s\}_{s\in I}\subset  \mathscr P_c(\R^m)$ defined on some interval $I\subset\R$ is called a \emph{$(u(\cdot),C,c[0])$-calibrated curve}, if for any $a,b\in I$, $a\leqslant b$, we have 
\begin{align*}
	u(\mu_b)-u(\mu_a)=C^{b-a}(\mu_a,\mu_b)+c[0](b-a).
\end{align*}
\end{defn}

\begin{Rem}\label{rem:wass-cali-exist}
Suppose $u$ is a viscosity solution.  For $\mu\in\mathscr P_c(\R^m)$, there is a $(u(\cdot),C,c[0])$-calibrated curve ending at $\mu$. This is a direct consequence of the following fact mentioned in \cite{Fathi_book}: for each $x\in\R^m$, there exists a $(u,L,c[0])$-calibrated curve $\eta\in C^2((-\infty,0],\R^m)$ with $\eta(0)=x$ in $\R^m$. In other words, for any $a\leqslant b\leqslant 0$, 
\begin{align*}
u(\eta(b))-u(\eta(a))=\int_a^bL(\eta(s),\dot\eta(s))\,ds.	
\end{align*}
Define $\mu=\text{law}(x(\cdot))$. Then there exists a random curve $\xi=\xi(t)=\xi(t,\omega)$ such that $\xi(0,\omega)=x(\omega)$ and $\xi(\cdot,\omega)\in C^2((-\infty,0],\R^m)$ is a calibrated curve for each $\omega\in\Omega$. Suppose $\mu_s:=\text{law}(\xi(s))$, then $\{\mu_s\}_{s\in(-\infty,0]}$ is we need.
\end{Rem}

\begin{defn}[Cut point]\label{def:cut measure}
We call $\mu\in\mathscr P_c(\R^m)$ a \emph{cut point} of $u(\cdot)$, the potential energy functional induced by viscosity solution $u$, if all $(u(\cdot),C,c[0])$-calibrated curves ending at $\mu$ are unable to extend forward still as $(u(\cdot),C,c[0])$-calibrated curves. The set of all cut points of $u(\cdot)$ is denoted by $\mathscr C(u(\cdot))$, which is called the \emph{cut locus} of functional $u(\cdot)$.

In general, $\mathscr S(u(\cdot))\subset\mathscr C(u(\cdot))$.
\end{defn}

Inspired by the concept of cut time function, we introduce the \emph{cut time function of measure} with respect to the potential energy induced by viscosity solution $u$:
\begin{equation}\label{eq: cut time measure}
    \begin{split}
    	T_u(\mu):=&\,\sup\{t\geqslant 0:\exists\nu(\cdot)\in\mathrm{AC}([0,t];\mathscr P_c(\R^m)),\\
    	&\,\qquad\qquad \text{such that}\,u(\nu(t))=u(\mu)+C^t(\mu,\nu(t))+c[0]t\}.
    \end{split}
\end{equation}
Then $\mu\in\mathscr C(u(\cdot))$ if and only if $T_u(\mu)=0$. 

\begin{The}[\cite{CCSW2025}]\label{thm: measure cut time}
Assume that $u$ is the viscosity solution to \eqref{eq:HJ_wk} on $\R^m$ and $\mu\in\mathscr P_c(\R^m)$. Then
\begin{align*}
	T_u(\mu)&=\sup\{t\geqslant 0: P_t^-\circ P_t^+u(\mu)=P_t^+\circ P_t^-u(\mu)\}\\
		&=\sup\{t\geqslant 0:\mu(\{x\in\R^m:(T^-_t\circ T^+_t-T^+_t\circ T^-_t)u(x)=0\})=1\}\\
		&=\inf\{\tau_u(x):x\in\text{\rm supp}(\mu)\}.
\end{align*}
\end{The}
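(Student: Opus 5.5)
The plan is to prove the three expressions for $T_u(\mu)$ equal by passing through the pointwise cut time $\tau_u$, and to reduce the functional statements on $\mathscr P_c(\R^m)$ to pointwise statements on $\R^m$ via Proposition~\ref{pro:probability lax-oleinik}(3) and Corollary~\ref{cor:wkam-fixed}. Write
\begin{align*}
	T_1&=\sup\{t\geqslant 0: P_t^-\circ P_t^+u(\mu)=P_t^+\circ P_t^-u(\mu)\},\\
	T_2&=\sup\{t\geqslant 0:\mu(\{x:(T^-_t\circ T^+_t-T^+_t\circ T^-_t)u(x)=0\})=1\},\\
	T_3&=\inf\{\tau_u(x):x\in\operatorname{supp}\mu\}.
\end{align*}

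\emph{The identity $T_1=T_2$.} By Proposition~\ref{pro:probability lax-oleinik}(3), $P_t^-\circ P_t^+u(\mu)=\int (T_t^-\circ T_t^+u)\,d\mu$, and similarly $P_t^+\circ P_t^-u(\mu)=\int (T_t^+\circ T_t^-u)\,d\mu$, with the obvious pointwise reading of the second formula there. For a weak KAM solution the pointwise identities $T_t^-\circ T_t^+u=u$ and $T_t^-u+c[0]t=u$ hold. Since $T^+_t$ commutes with constants, $T_t^+\circ T_t^-u=T_t^+u-c[0]t$. Moreover $u-T_t^+u+c[0]t\geqslant 0$, because $u(y)-A_t(x,y)-c[0]t\leqslant u(x)$ for a subsolution. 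Hence the integrand $T^-_tT^+_tu-T^+_tT^-_tu$ is a nonnegative continuous function. Its $\mu$-integral vanishes iff it vanishes $\mu$-a.e., and by continuity iff it vanishes on $\operatorname{supp}\mu$.

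\emph{The identity $T_2=T_3$.} By the reformulation of $\tau_\phi$ recalled in Section~\ref{subsec:intrinsic}, the commutator vanishes at $x$ exactly when $t\leqslant\tau_u(x)$. This uses that the zero set in $t$ is an interval, which follows from the semigroup property and the monotonicity in Proposition~\ref{pro:sub-hjs}. So the condition in $T_2$ holds iff $\tau_u\geqslant t$ on $\operatorname{supp}\mu$. Upper semicontinuity of $\tau_u$, a standard weak KAM fact, makes the superlevel set closed, so this is consistent with the a.e.\ formulation. Taking the supremum gives $T_2=T_3$.

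\emph{The identity $T_u(\mu)=T_3$.}

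For $T_u(\mu)\geqslant T_3$: take $t<T_3$. For each $x\in\operatorname{supp}\mu$ let $\gamma_x$ be the unique calibrated curve on $[0,t]$, which exists since $t<\tau_u(x)$. Let $\xi(s)=\gamma_{x}(s)$ with $x\sim\mu$, using a measurable selection; uniqueness away from the cut locus gives measurability. Set $\nu(s)=\mathrm{law}(\xi(s))$. Then
\[
u(\nu(t))-u(\mu)=\E\int_0^t L(\xi,\dot\xi)\,ds+c[0]t\geqslant C^t(\mu,\nu(t))+c[0]t.
\]
The reverse inequality is Proposition~\ref{pro:sub-hjs}(2), so equality holds.

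For $T_u(\mu)\leqslant T_3$: suppose $\nu(\cdot)$ attains equality at time $t$. Take an optimal dynamical coupling $\xi$ from Proposition~\ref{pro:exist-displacement}. Then
\[
\E\Big[u(\xi(t))-u(\xi(0))-\int_0^tL(\xi,\dot\xi)\,ds-c[0]t\Big]=0,
\]
and the integrand is $\leqslant 0$ pathwise since $u$ is a subsolution. Hence it vanishes a.s., so almost every path is calibrated on $[0,t]$, which gives $\tau_u(\xi(0))\geqslant t$ for $\mu$-a.e.\ starting point. Upper semicontinuity of $\tau_u$ then extends this to all of $\operatorname{supp}\mu$.

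\emph{Main obstacle.} I expect the hardest step to be this last passage from $\mu$-a.e.\ to every point of $\operatorname{supp}\mu$. It relies on upper semicontinuity of $\tau_u$, which I would prove by a limit of calibrated curves, using the compactness of minimizers.
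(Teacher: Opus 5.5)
Your argument is correct. The survey does not prove this theorem; it is quoted from \cite{CCSW2025}, so there is no in-paper proof to match. The comparison below is with the more direct route that the ordering of the statement suggests.

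Your individual steps hold:
\begin{itemize}
\item the reduction $P^-_tP^+_tu(\mu)-P^+_tP^-_tu(\mu)=\int(u-T^+_tu+c[0]t)\,d\mu$, with a nonnegative continuous integrand;
\item the identification of its zero set with $\{\tau_u\geqslant t\}$;
\item the calibrated random curve $\xi(s)=\pi\Phi^s_H(X,Du(X))$, $X\sim\mu$, for the lower bound, whose law is Lipschitz in $W_2$ because the speeds are bounded, so it is an admissible $\nu(\cdot)$;
\item the optimal dynamical coupling argument for the upper bound.
\end{itemize}

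The step you flag as the main obstacle is already settled by your own $T_2=T_3$ step. The set $\{\tau_u\geqslant t\}$ is the zero set of the continuous function $u-T^+_tu+c[0]t$, hence closed. So $\mu(\{\tau_u\geqslant t\})=1$ already forces $\supp\mu\subset\{\tau_u\geqslant t\}$, and no separate compactness argument for calibrated curves is needed.

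Where you genuinely differ is in the identity $T_u(\mu)=T_1$. You reach it through $T_3$, pathwise; it can instead be obtained at the functional level.
\begin{itemize}
\item By Corollary~\ref{cor:wkam-fixed}, $P^-_t\circ P^+_tu=u(\cdot)$ and $P^+_t\circ P^-_tu=P^+_tu-c[0]t$.
\item Hence the commutator at $\mu$ equals $u(\mu)+c[0]t-\sup_\nu\{u(\nu)-C^t(\mu,\nu)\}$, which is $\geqslant 0$ by Proposition~\ref{pro:sub-hjs}(2).
\item It vanishes iff the supremum equals $u(\mu)+c[0]t$. The supremum is attained: push $\mu$ forward by a Borel maximizer of $u(\cdot)-A_t(x,\cdot)$.
\item Any maximizer can be joined to $\mu$ by a displacement interpolation (Proposition~\ref{pro:exist-displacement}). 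So vanishing of the commutator at time $t$ is exactly the condition defining $T_u(\mu)$.
\end{itemize}

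This mirrors the pointwise commutator reformulation of $\tau_\phi$. It needs neither uniqueness of forward calibrated curves off $\Cut(u)$ nor a measurable selection of them. Your route, in exchange, exhibits explicitly a calibrated random curve realizing $T_u(\mu)$. It also shows that any optimal dynamical coupling attaining the equality in the definition of $T_u(\mu)$ is pathwise calibrated, which is more information than the functional argument gives.
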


Following the concept of the Aubry set in Aubry-Mather theory or weak KAM theory, the \emph{Aubry set} of the potential energy function \( u(\cdot) \) is defined as the collection of all measures \(\mu\) for which there exists a \((u(\cdot), C, c[0])\)-calibrated curve \(\nu(\cdot): [0, +\infty) \to \mathscr{P}(\mathbb{T}^m)\) such that \(\nu(0) = \mu\). Consequently, a measure \(\mu\) is in the Aubry set of \( u(\cdot) \) if and only if \( T_u(\mu) = +\infty \).

\begin{Cor}[\cite{CCSW2025}]\label{cor:cut-prop}
Under the assumptions in Theorem \ref{thm: measure cut time},
\begin{enumerate}
	\item if $\supp(\mu)\cap\Cut(u)\neq\varnothing$, then $\mu\in\mathscr C(u(\cdot))$;
		\item if $\tau_u$ is continuous, then $\supp(\mu)\cap\Cut(u)\neq\varnothing$ if and only if $\mu\in\mathscr C(u(\cdot))$;
		\item $\mu$ is in the Aubry set of $u(\cdot)$ if and only if $\supp(\mu)\subset\mathcal I(u)$;
		\item if \(\mu = \text{\rm law}(Z)\) is the distribution of an \(\mathbb{R}^m\)-valued random variable \(Z : \Omega \to \mathbb{R}^m\), then \(\mu \in \mathscr{C}(u(\cdot))\) if and only if, for any \(\varepsilon > 0\),
		\begin{align*}
			\mathbb{P}\left( \{\omega \in \Omega : \tau_u(Z(\omega)) < \varepsilon \} \right) > 0.
		\end{align*}
\end{enumerate}
\end{Cor}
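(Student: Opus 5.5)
The plan is to reduce all four equivalences to the characterization of cut time in Theorem~\ref{thm: measure cut time}, namely $T_u(\mu)=\inf\{\tau_u(x):x\in\supp(\mu)\}$, together with $\mu\in\mathscr C(u(\cdot))\iff T_u(\mu)=0$ and $\mu$ in the Aubry set $\iff T_u(\mu)=+\infty$. Each item then becomes a statement about the infimum of $\tau_u$ over the support of $\mu$.

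\textbf{Step 1 (items (1) and (3)).} For (1), suppose $x\in\supp(\mu)\cap\Cut(u)$. Then $\tau_u(x)=0$, so the infimum in Theorem~\ref{thm: measure cut time} vanishes. Hence $T_u(\mu)=0$, that is, $\mu\in\mathscr C(u(\cdot))$.

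For (3), $T_u(\mu)=+\infty$ means $\tau_u(x)\geqslant t$ for every $x\in\supp(\mu)$ and every $t>0$. This says exactly that $\tau_u\equiv+\infty$ on $\supp(\mu)$, i.e. $\supp(\mu)\subset\mathcal I(u)$. The converse direction is the same argument read backwards.

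\textbf{Step 2 (item (2)).} One implication is (1). For the other, assume $\tau_u$ is continuous and $\mu\in\mathscr C(u(\cdot))$, so $\inf_{\supp(\mu)}\tau_u=0$. Since $\mu\in\mathscr P_c(\R^m)$, its support is compact. A continuous function on a compact set attains its infimum, so there is $x\in\supp(\mu)$ with $\tau_u(x)=0$, i.e. $x\in\Cut(u)$.

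The point needing care is that $\tau_u$ takes values in $[0,+\infty]$. I would handle this by replacing $\tau_u$ with $\min\{\tau_u,1\}$, which is continuous and real-valued and has the same zero set.

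\textbf{Step 3 (item (4)).} With $\mu=\mathrm{law}(Z)$, we have $\mathbb P(\tau_u(Z)<\varepsilon)=\mu(\{\tau_u<\varepsilon\})$.

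For the direction ``$\Leftarrow$'', suppose $\mu(\{\tau_u<\varepsilon\})>0$. Since $\mu$ is concentrated on $\supp(\mu)$, the set $\{\tau_u<\varepsilon\}\cap\supp(\mu)$ is nonempty. Hence $\inf_{\supp(\mu)}\tau_u<\varepsilon$. If this holds for every $\varepsilon>0$, then $T_u(\mu)=0$.

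For the direction ``$\Rightarrow$'', $T_u(\mu)=0$ only produces points $x\in\supp(\mu)$ with $\tau_u(x)<\varepsilon$. To conclude, the set $\{\tau_u<\varepsilon\}$ must carry positive mass. This is the main obstacle, and it is where the semicontinuity of $\tau_u$ enters. I expect $\tau_u$ to be upper semicontinuous, which is standard in weak KAM theory: limits of calibrated curves are calibrated, so $\limsup\tau_u(x_n)\leqslant\tau_u(x)$. Granting this, $\{\tau_u<\varepsilon\}$ is open and contains a point of $\supp(\mu)$, so it has positive $\mu$-measure by the definition of support.

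If upper semicontinuity is not available as a citation, the alternative is to use the second formula in Theorem~\ref{thm: measure cut time}. If $\mu(\{\tau_u<\varepsilon\})=0$, then $\mu$-almost every $x$ satisfies $\tau_u(x)\geqslant\varepsilon$. Hence $(T^-_t\circ T^+_t-T^+_t\circ T^-_t)u(x)=0$ for all $t<\varepsilon$ on a set of full $\mu$-measure. That formula then gives $T_u(\mu)\geqslant\varepsilon$, contradicting $\mu\in\mathscr C(u(\cdot))$. This route avoids semicontinuity altogether, so I would use it as the primary argument for this direction.
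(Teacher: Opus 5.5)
Your proposal is correct and is exactly the intended derivation. The survey gives no proof of its own; it cites \cite{CCSW2025} and presents the corollary as a direct consequence of Theorem~\ref{thm: measure cut time}. Each item is read off from $T_u(\mu)=\inf_{\supp(\mu)}\tau_u$ and the essential-infimum formula, using compactness of $\supp(\mu)$ for (2), and either openness of $\{\tau_u<\varepsilon\}$ (upper semicontinuity) or the second formula for (4).

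The one step you leave implicit in your fallback argument for (4) is monotonicity in $t$. With the critical value normalized to $0$, the commutator equals $u-T^+_tu\geqslant 0$ and is nondecreasing in $t$. Hence its zero set in $t$ is an initial interval, so $\tau_u(x)\geqslant\varepsilon$ does force it to vanish for every $t<\varepsilon$.
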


Applying the idea of intrinsic singular characteristics (see Section~\ref{subsec:intrinsic}) together with the methods of minimizing movement and maximal slope curves sketched below Theorem~\ref{thm:solution limit}, we obtain the following main results on the propagation of singularities for the viscosity solution of \eqref{phjs}.

\begin{The}[\cite{CCSW2025}]
Suppose \(\phi \in \SCL(\mathbb{T}^m)\), \(T > 0\), and \(\mu_0 \in \mathscr{P}(\mathbb{T}^m)\). Then there exists a Lipschitz curve \(\mu(\cdot): [0,+\infty) \to \mathscr{P}(\mathbb{T}^m)\) satisfying the following continuity equation:
\begin{equation}\label{eq:gene-sing-continuity}
\left\{
	\begin{array}{ll}
		\frac{d}{dt}\mu + \mathrm{div}(H_p(\cdot, \mathbf{p}_{\phi,H}^\#(\cdot)) \cdot \mu) = 0, \\
		\mu(0) = \mu_0,
	\end{array}
\right.
\end{equation}
where $\mathbf{p}_{\phi,H}^\#(\cdot)$ is defined in \eqref{eq:P^}. 

Moreover, if \(\phi\) in \eqref{eq:gene-sing-continuity} is a viscosity solution to \eqref{eq:HJ_wk} and \(\mu(\cdot): [0,+\infty)\to \mathbb{T}^m\) is the solution of \eqref{eq:gene-sing-continuity}, then
\begin{enumerate}
	\item If \(\mu_0(\overline{\text{Sing}\,(\phi)}) > 0\), then \([\mu(t)](\overline{\text{Sing}\,(\phi)}) > 0\) for all \(t \in [0, +\infty)\).
	\item If \(\mu_0 \in \mathscr{C}(\phi(\cdot))\), then \(\mu(t) \in \mathscr{C}(\phi(\cdot))\) for all \(t \in [0, +\infty)\).
\end{enumerate}
\end{The}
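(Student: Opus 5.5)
The plan is to realize every solution of \eqref{eq:gene-sing-continuity} as the time marginal of a probability measure on the path space $\mathcal{S}^+_{\phi,H}$. The propagation statements then reduce to pointwise statements along strict singular characteristics, which are already available from Theorem \ref{thm:propagation_cut} and Theorem \ref{thm:mass sing}.

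\emph{Existence.} By Theorem \ref{thm:msc exs} the set-valued map $x\rightrightarrows\{\gamma\in\mathcal{S}^+_{\phi,H}:\gamma(0)=x\}$ has nonempty values. By Proposition \ref{pro:ineq}(1) these values are compact, and the map has closed graph by Theorem \ref{thm:stability}. Hence it admits a measurable selection $x\mapsto\gamma(x,\cdot)$. Set $\mu(t)=(\Phi^t_\gamma)_\#\mu_0$. Theorem \ref{thm:CE} gives \eqref{eq:gene-sing-continuity} on every $[0,T]$, and hence on $[0,+\infty)$. All strict singular characteristics are equi-Lipschitz, with speed bounded by $\max|H_p|$ on $\{|p|\leqslant\mathrm{Lip}\,\phi\}$; call this bound $C$. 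The coupling $(\Phi^s_\gamma,\Phi^t_\gamma)_\#\mu_0$ then gives $W_p(\mu(s),\mu(t))\leqslant C|t-s|$, so $\mu(\cdot)$ is Lipschitz.

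\emph{Reduction for a general solution.} Let $\mu(\cdot)$ be any (narrowly continuous) solution. The vector field $H_p(x,\mathbf{p}^\#_{\phi,H}(x))$ is bounded and Borel, so Ambrosio's superposition principle applies on $\mathbb{T}^m$. It yields a probability measure $\eta$ on $C^0([0,+\infty),\mathbb{T}^m)$, concentrated on absolutely continuous curves with $\dot\gamma=H_p(\gamma,\mathbf{p}^\#_{\phi,H}(\gamma))$ a.e., such that $\mu(t)=(\mathbf{V}^t)_\#\eta$. By definition these curves lie in $\mathcal{S}^+_{\phi,H}$. Consequently, for any Borel set $B\subset\mathbb{T}^m$ that is forward invariant along strict singular characteristics (meaning $\gamma(0)\in B\Rightarrow\gamma(t)\in B$ for all $t\geqslant0$), we get
\[
\mu(t)(B)=\eta\{\gamma(t)\in B\}\geqslant\eta\{\gamma(0)\in B\}=\mu_0(B).
\]

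\emph{Item (1).} Take $B=\overline{\mathrm{Sing}\,(\phi)}$. Its forward invariance is the pointwise content behind Theorem \ref{thm:mass sing}, and the monotonicity there gives $\mu(t)(B)\geqslant\mu_0(B)>0$.

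\emph{Item (2).} By Corollary \ref{cor:cut-prop}(4), $\mu\in\mathscr{C}(\phi(\cdot))$ if and only if $\mu(B_\varepsilon)>0$ for every $\varepsilon>0$, where $B_\varepsilon=\{\tau_\phi<\varepsilon\}$. This set is Borel, since $\tau_\phi$ is upper semicontinuous. It therefore suffices to prove that each $B_\varepsilon$ is forward invariant. Let $\gamma\in\mathcal{S}^+_{\phi,H}$ with $\tau_\phi(\gamma(0))<\varepsilon$, and distinguish two cases.
\begin{enumerate}
\item If $\gamma(0)\in\mathrm{Cut}\,(\phi)$, then Theorem \ref{thm:propagation_cut}(2) gives $\gamma(t)\in\mathrm{Cut}\,(\phi)$ for all $t$. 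This applies because strict singular characteristics solve \eqref{eq:GCI}.
\item If $\gamma(0)\notin\mathrm{Cut}\,(\phi)$, then Theorem \ref{thm:propagation_cut}(1) shows that $\gamma$ coincides on $[0,\tau_\phi(\gamma(0))]$ with the calibrated curve $\gamma_x$. Along it $\tau_\phi(\gamma(s))=\tau_\phi(\gamma(0))-s<\varepsilon$. It reaches a cut point at time $\tau_\phi(\gamma(0))$, and from then on we are in the first case.
\end{enumerate}
Thus $\mu(t)(B_\varepsilon)\geqslant\mu_0(B_\varepsilon)>0$ for all $\varepsilon>0$, which gives $\mu(t)\in\mathscr{C}(\phi(\cdot))$.

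\emph{Main obstacle.} The delicate step is the reduction for a general solution. The vector field is merely Borel, and solutions of \eqref{eq:gene-sing-continuity} are not unique. One therefore has to check carefully that the superposition principle applies with the Borel selection $\mathbf{p}^\#_{\phi,H}$, and that $\eta$-a.e. curve is a genuine strict singular characteristic. A second point to verify is the identity $\tau_\phi(\gamma_x(s))=\tau_\phi(x)-s$ along calibrated curves, which should follow from the definition \eqref{eq:cut_time}. If the statement is understood only for the solutions constructed above, the superposition step is unnecessary: one takes $\eta=\gamma_\#\mu_0$ directly.
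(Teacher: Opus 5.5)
Your proposal is correct, but it does not follow the route the paper indicates. The paper gives no proof and points to \cite{CCSW2025}. There the curve is obtained intrinsically in $\mathscr{P}(\mathbb{T}^m)$: one runs the minimizing-movement scheme $\mu_{k+1}\in\arg\max_{\nu}\{\phi(\nu)-C^\tau(\mu_k,\nu)\}$ for the potential energy $\phi(\cdot)$, that is, through the random Lax--Oleinik operator $P^+_\tau$. One then passes to the limit $\tau\to0^+$ in the energy identity, as in the sketch after Theorem~\ref{thm:solution limit}, so that $\mu(\cdot)$ appears as a measure-valued maximal slope curve.

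You stay Lagrangian instead. Existence comes from a measurable selection together with Theorem~\ref{thm:CE}. Both items are then reduced, via the superposition principle, to forward invariance along strict singular characteristics of $\overline{\mathrm{Sing}\,(\phi)}$ and of the open sets $\{\tau_\phi<\varepsilon\}$. Given \cite{CCHW2024}, this is shorter, and it proves more: (1) and (2) then hold for every narrowly continuous solution of \eqref{eq:gene-sing-continuity}, not only the constructed one, which matters because solutions are not unique. Working with $\{\tau_\phi<\varepsilon\}$ rather than $\mathrm{Cut}\,(\phi)$ is the right choice, since $\mu_0\in\mathscr{C}(\phi(\cdot))$ does not force $\mu_0(\mathrm{Cut}\,(\phi))>0$.

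The intrinsic scheme has its own advantages. It needs neither measurable selection nor superposition. It delivers the curve together with an energy identity for $\int\phi\,d\mu_t$ tied to the Hamilton--Jacobi equation on $\mathscr{P}$. It is also the approach that can extend to functionals with no pointwise lift, such as the entropic $\Phi_\beta$ in the last open problem.

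Two small points should be written out.
\begin{enumerate}
\item Forward invariance of $\overline{\mathrm{Sing}\,(\phi)}$ is not literally what Theorem~\ref{thm:mass sing} states. It follows by applying that theorem to $\bar\mu=\delta_x$, with a measurable selection altered at the single point $x$ so that it picks the given curve.
\item The inequality $\tau_\phi(\gamma_x(s))\leqslant\tau_\phi(x)-s$, which is all you need, follows by concatenating $\gamma_x|_{[0,s]}$ with forward calibrated curves from $\gamma_x(s)$. This works because calibration is additive and calibrated curves are $C^1$ minimizers.
\end{enumerate}
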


\begin{Rem}
Given \(\phi \in \SCL(\T^m)\), we define
\begin{align*}
	\overline{\mathscr S}(\phi(\cdot)) := \left\{ \mu \in \mathscr P(\T^m) : \mu(\overline{\text{Sing}\,(\phi)}) > 0 \right\}.
\end{align*}
Then, \(\overline{\mathscr S}(\phi(\cdot))\) is an \(F_\sigma\)-set in the sense of weak-$\ast$ topology. Suppose \(u\) is a viscosity solution of \eqref{eq:HJ_wk}. Then $\text{Sing}\,(u)\subset\text{Cut}\,(u)\subset\overline{\text{Sing}\,(u)}$. However, \(\mathscr C(u(\cdot))\) and \(\overline{\mathscr S}(u(\cdot))\) are not mutually contained in general, and we have \(\mathscr S(u(\cdot)) \subsetneq \mathscr C(u(\cdot)) \cap \overline{\mathscr S}(u(\cdot))\). 
\end{Rem}

\section{Out look and open problems}
\label{sec:open}

This field remains highly open, characterized by an essential irreversibility in the evolution of singularities within Hamiltonian systems that serves as a fundamental link between microscopic dynamics and macroscopic equilibrium. This field is deeply connected to several mathematical fields such as Symplectic Geometry, Optimal Transport and Mean Field Theory, and Stochastic Analysis. These connections suggest that the GHGF is not merely a technical tool for PDE analysis, but a central mechanism for understanding structural stability and dynamical irreversibility in complex systems. In light of these observations, we pose several open problems below.

\begin{Prob}\label{prob:uniqueness}
While the existence and stability of strict singular characteristics have been established (see Sect. \ref{subsec:wellposeness}), their fine dynamical properties under a general Tonelli Hamiltonian $H(x,p)$ remain a subject of ongoing research. For a general Tonelli system, the lack of a Riemannian inner product structure in the fibers poses significant challenges to a complete dynamical description. 
\begin{enumerate}
	\item Although the flow is known to be stable, the uniqueness of strict singular characteristics for general Tonelli Hamiltonians is not yet fully understood; only a limited number of cases have been treated so far (see, for instance, \cite{Cannarsa_Cheng2021b,Cheng_Hong2022a,Cheng_Hong2023}). We ask whether the strict convexity of $H$ alone is sufficient to prevent the branching of singular paths, or if the Finsler-like nature of the system allows for multiple energy-dissipating trajectories from the same initial singularity.
	\item A major distinction arises between the propagation of cut points and singular points. While the global propagation of cut points is already known (see Sect. \ref{subsec:prop_cut}), the behavior regarding the singular set \(\mathrm{Sing}(\phi)\) is less clear. If the Lagrangian is a Riemannian metric (\cite{ACNS2013}), a general mechanical Lagrangian (\cite{ACCM2026}), or even a magnetic mechanical Lagrangian (\cite{CCHW2025}), it is known that such curves stay within \(\mathrm{Sing}(\phi)\) globally. We seek to determine if this property is universal for Tonelli systems.
	\item This remains the central technical challenge for obtaining quantitative control over the flow. We ask whether along a strict singular characteristics $\gamma$, the forward energy evolution of $H(\gamma(t), \mathbf{p}^\#_{\phi,H}(\gamma(t)))$ satisfies certain Gronwall-type inequality like in \cite{ACCM2026}. Establishing such an estimate would provide the necessary tool to link the local variational optimality of the path to its global dynamical regularity.
	\item Does the left derivative $\dot{\gamma}^-(t)$ of a strict singular characteristic correspond to a specific \emph{min-max energy point} within the superdifferential $D^+\phi(x)$? Conversely, we ask whether every min-max energy point in $D^+\phi(x)$ induces a unique backward-in-time strict singular characteristic starting from $x$. Establishing this duality would link the topological properties of the momentum space to the selection of physical trajectories.
\end{enumerate}	
\end{Prob}

\begin{Prob}
Let $\{\mathbf{P}^t\}_{t \ge 0}$ be the semi-dynamical system induced by the generalized Hamilton gradient flow (see, for instance, Sect. \ref{subsec:GHGF}, \ref{subsec:Mather_max} and \ref{subsec:mass_transport}). While Mather measures are the classic invariant measures supported on the Aubry set $\mathcal{A}$, what other invariant or occupation measures exist beyond Mather measures within the cut locus $\text{Cut}(\phi)$? Furthermore, how do these singular invariant structures correspond to the elliptic regions of the underlying Hamiltonian system? This essentially relies on the technical point described in Problem \ref{prob:uniqueness} (3).
\end{Prob}

\begin{Prob}
Under what conditions is the cut locus $\text{Cut}(\phi)$ $\mathscr{H}^{n-1}$-rectifiable? Furthermore, what is the dynamical significance of this rectifiability regarding:
\begin{enumerate}
	\item The metric structure of the Aubry and Mather sets;
	\item The duality between the Hessian measure $[D^2\phi]$ and the Green bundles $(G_-, G_+)$;
	\item The regularity of the Cut time function $\tau(x)$ as a descriptor of singularity propagation.
\end{enumerate}
\end{Prob}

While the fact that $\text{Leb}(\text{Cut}(\phi)) = 0$ ensures the uniqueness of minimizing orbits for almost every starting point, it is insufficient to characterize the global stability of the flow. The $\mathscr{H}^{n-1}$-rectifiability provides the necessary geometric control to treat the cut locus as a propagating front. 

In the classical setting of the distance function $d_\Omega$ on a Finsler manifold, Li and Nirenberg \cite{Li_Nirenberg2005} established that $\text{Cut}(d_\Omega)$ is $\mathscr{H}^{n-1}$-rectifiable provided the boundary $\partial\Omega$ is of class $C^{2,1}$ (see also \cite{Mantegazza_Mennucci2003,Mennucci2004}). This regularity is optimal; rectifiability may fail if $\partial\Omega \in C^{2, \alpha}$ for $\alpha < 1$. While discussions on the Cauchy problem exist (e.g., \cite{Cannarsa_Sinestrari_book}), this problem remains one challenges in Weak KAM theory.

The rectifiability of $\text{Cut}(\phi)$ is a window into the global behavior of the Hamiltonian flow:
\begin{enumerate}
	\item The cut locus $\text{Cut}(\phi)$ can be viewed as the collision manifold of minimizing orbits. Dynamically, these orbits are backward asymptotic to the Aubry set $\mathcal{A}$. The $\mathscr{H}^{n-1}$-rectifiability of the cut locus acts as a geometric object that the basins of attraction (in the sense of the Lax-Oleinik semigroup) are tame. If rectifiability fails, it implies a fractalization of the action-minimizing phase space, suggesting that the Aubry set may possess a non-rectifiable, complex metric structure.
	\item There exists a profound duality between the Hessian measure $[D^2\phi]$ concentrated on $\text{Cut}(\phi)$ and the Green bundles $(G_-, G_+)$ supported on $\mathcal{A}$. The $\mathscr{H}^{n-1}$-rectifiability of $\text{Cut}(\phi)$ is fundamentally a statement about the controlled blow-up of the Riccati flow relative to the dynamical gap between $G_-$ and $G_+$. 
	\item From the perspective of Hamiltonian dynamics, $\mathscr{H}^{n-1}$-rectifiability implies that the shocks of the system are confined to a set of negligible $n$-dimensional volume. This is closely related to the Ma\~n\'e genericity conjecture (\cite{Mane1996,Contreras_Figalli_Rifford2015,Contreras2024}).
	\item The $\mathscr{H}^{n-1}$-rectifiability of $\text{Cut}(\phi)$ is intimately linked to the log-Lipschitz or BV regularity of the cut time function. If $\tau(x)$ is tame, the shocks of the Hamilton-Jacobi equation develop along well-defined fronts.
\end{enumerate}

\begin{Prob}
A long-standing problem in the geometric theory of HJ equations is the stability of the cut locus $\mathrm{Cut}(\phi)$ under perturbations of the potential $\phi$ or the Hamiltonian $H$. While the viscosity solution $\phi$ depends continuously on its data in the $C^0$-norm, its singular set can undergo drastic topological changes even under $C^\infty$-small perturbations. In this work, we raise the following questions regarding the Hausdorff stability of the cut locus:
\begin{enumerate}
	\item Consider the viscous solution $\phi^\epsilon$ or a sequence of smooth approximations $\phi_n \to \phi$. Does the transient cut locus $\mathrm{Cut}(\phi_n)$ converge to the limit $\mathrm{Cut}(\phi)$ in the Hausdorff sense? Given that $\mathrm{Cut}(\phi)$ is typically only rectifiable and not closed, we ask if the closure $\overline{\mathrm{Cut}(\phi)}$ possesses a form of structural stability under the GHGF.
	\item How does the singular set $\mathrm{Sing}(\phi)$ respond to a change of $H$ in the cohomology class $[\alpha] \in H^1(M, \mathbb{R})$? We investigate whether certain critical classes trigger topological phase transitions in the cut locus, such as the sudden connection of isolated singular components.
	\item For a fixed Hamiltonian $H$, what is the relationship between the singular sets of different viscosity solutions $\phi_1, \phi_2$? We seek to determine if there exist universal singular structures dictated solely by the Hamiltonian dynamics of $H$.
	\item The cut locus acts as a dynamical skeleton for the manifold. We investigate whether the $\mathscr{H}^{n-1}$-measure of the cut locus is stable, or if ghost singularities can vanish abruptly in the limit. Specifically, we explore whether the convergence of the Hessian measures $[D^2\phi_n] \to [D^2\phi]$ in the sense of distributions can be strengthened to provide geometric convergence of their supports in the Hausdorff metric.
\end{enumerate}	
\end{Prob}

\begin{Prob}
Canonical contact Hamiltonian systems and the associated contact-type Hamilton-Jacobi equations have become an active area of research over the past decades. The analysis relies on an implicit variational principle developed in \cite{Wang_Wang_Yan2017,Wang_Wang_Yan2019_1}, or on Herglotz' generalized variational principle (\cite{CCWY2019,CCJWY2020}) in a more general setting, or on the special case of the discounted Hamilton-Jacobi equation (see, for instance, \cite{DFIZ2016,Maro_Sorrentino2017}).
\begin{enumerate}
	\item In contact Hamiltonian systems, the presence of an internal dissipation function $\lambda$ (derived from Herglotz's generalized variational principle) breaks the conservative symplectic structure. To generalize the GHGF framework to this setting, we must reformulate the energy dissipation inequality to capture the non-conservative dissipation rate along characteristics. We need to derive the GHGF for contact-type Hamiltonian systems.
	\item In the vanishing discount or vanishing contact problem, the Hausdorff stability of the cut locus with respect to the internal dissipation function $\lambda$ depends critically on the sign of $\lambda$ as $\lambda\to 0$.  For the three regimes $\lambda\to0^+$ (positive dissipation), $\lambda\to0^-$ (negative dissipation, i.e., anti-damping), and the hybrid direction (e.g., $\lambda$ taking both signs or depending on $x$), the following questions arise: does the cut locus converge in the Hausdorff sense to the cut locus of the limiting Hamiltonian system, or does its Hausdorff limit contain additional singularities? Alternatively, can the cut locus exhibit oscillatory behavior so that only weak convergence (e.g., $\Gamma$-limit or Hausdorff lower/upper semicontinuity) can be expected in general?
\end{enumerate}
\end{Prob}

\begin{Prob}
Let $\{T_t^- u_0\}_{t \ge 0}$ be the Lax-Oleinik semigroup associated with a Tonelli Hamiltonian $H$ with Ma\~n\'e's critical value $c[H]=0$. As $t \to \infty$, $T_t^- u_0$ converges uniformly to a viscosity solution $u_\infty$ of \eqref{eq:HJ_wk}. How does the geometric structure of the cut locus $\text{Cut}(T_t^- u_0)$ and/or $\text{Cut}(u_\infty)$ quantitatively govern the convergence rate $\|T_t^- u_0 - u_\infty\|_{C^0}$?
\end{Prob}

\begin{Prob}
To broaden the applicability of the GHGF, the theory must be extended to systems where classical Tonelli assumptions fail.
\begin{enumerate}
    \item When $H(x,p)$ is non-convex, variational methods lose their traditional power. Can we reconstruct the GHGF theory from a purely \emph{symplectic geometry} perspective, treating singularities as caustics or Lagrangian intersections?
    \item For Hamiltonians with low regularity (e.g., non-$C^2$ or discontinuous), does the GHGF exist as a \emph{measure-valued flow}? In this framework, the evolution of singularities would be described as a concentration process of probability measures in the configuration space.
\end{enumerate}
\end{Prob}

\begin{Prob}
Consider the viscous stationary Hamilton-Jacobi equation on a compact manifold $M$,
\begin{align*}
	H(x,Du^\epsilon(x)) - \epsilon \Delta u^\epsilon(x) = c[H]
\end{align*}
with $c[H]=0$. Let $X^\epsilon_t$ be the \emph{forward stochastic maximal slope curve} satisfying the stochastic differential equation
\begin{align*}
	dX^\epsilon_t = H_p(X^\epsilon_t,Du^\epsilon(X^\epsilon_t)) dt + \sqrt{2\epsilon} dW_t.
\end{align*}
As $\epsilon \to 0$, $u^\epsilon$ converges uniformly to the viscosity solution $u$ of \eqref{eq:HJ_wk}. When the strict singular characteristics are not unique, what is the asymptotic behavior of the probability measures induced by $X^\epsilon_t$? In particular, does the vanishing noise limit select a specific statistical distribution among the multiple possible singular paths?
\end{Prob}

\begin{Prob}
Let the setting be that of \cite{CCSW2025}. Consider the nonlinear free energy functional defined on $\mathscr{P}(\T^m)$
\begin{align*}
	\Phi_\beta(\nu)=\int_{\mathbb{R}^m}\phi\,d\nu + \beta\,\text{Ent}\,(\nu\mid\text{ref}),\qquad \beta\ge 0,
\end{align*}
where $\text{Ent}(\nu\mid\text{ref})$ is the relative entropy with respect to a reference measure $\text{ref}$. Define the nonlinear Lax–Oleinik operator as
\begin{align*}
	\mathcal{T}_t^-\Phi_\beta(\mu):=\inf_{\nu\in\mathscr{P}_c(\T^m)}\bigl\{\Phi_\beta(\nu)+C^t(\nu,\mu)\bigr\},
\end{align*}
and set $U_\beta(t,\mu):=\mathcal{T}_t^-\Phi_\beta(\mu)$.
\begin{enumerate}
	\item For every $\beta\ge 0$, does $U_\beta$ satisfy the dynamic programming principle and the \emph{master equation} in the viscosity sense of Definition~\ref{defn:viscosity}? Furthermore, what is the exact class of admissible initial functionals (containing both $\Phi_0$ and $\Phi_\beta$ for $\beta>0$) for which the results of \cite{CCSW2025} can be extended?
	\item For any $\beta>0$, is the minimizer $\nu_\beta$ in the definition of $U_\beta(t,\mu)$ absolutely continuous with respect to $\text{ref}$? Does this ensure that $U_\beta(t,\cdot)$ is everywhere Lions-differentiable such that $\text{Sing}\,(U_\beta(t,\cdot))=\varnothing$ for all $t>0$, in contrast to the singular case when $\beta=0$?
	\item As $\beta \to 0^+$, does the functional converge pointwise (or uniformly on compact sets) to $U_0(t,\mu)$? Can this limit be interpreted as an infinite-dimensional vanishing-viscosity method where the entropic term regularizes the singularities?
\end{enumerate}
\end{Prob}
 
%
%
%
%
%
%
%
%

\bibliography{mybib.bib}
\bibliographystyle{plain}


%
%
%
%
%
%
%
%
%

\end{document}